\newtheorem{assumption}{Assumption}
\DeclarePairedDelimiterX{\norm}[1]{\lVert}{\rVert}{#1}
\DeclarePairedDelimiterX{\bignorm}[1]{\big\lVert}{\big\rVert}{#1}
\DeclareMathOperator*{\Mat}{\text{Mat}}
\DeclareMathOperator*{\bbR}{\mathbb{R}}
\DeclareMathOperator*{\bbN}{\mathbb{N}}
\newcommand{\X}{\mathcal{X}}
\newcommand{\calH}{\mathcal{H}}
\newcommand{\bbE}{\mathbb{E}}
\def\fGP{f_{\text{GP}}}
\DeclareMathOperator*{\argmax}{arg\,max}
\DeclareMathOperator*{\argmin}{arg\,min}
\newcommand\restr[2]{{
  \left.\kern-\nulldelimiterspace 
  #1 
  \right|_{#2} 
  }}
\begin{document}

\title{Convergence Guarantees for Gaussian Process Means \\ With  Misspecified Likelihoods and Smoothness}

\author{\name George Wynne \email g.wynne18@imperial.ac.uk \\
       \addr Department of Mathematics\\
       Imperial College London\\
       London, SW7 2AZ, UK\\
       \AND
       \name Fran\c{c}ois-Xavier Briol \email f.briol@ucl.ac.uk \\
       \addr Department of Statistical Science\\
       University College London\\
       London, WC1E 7HB, UK\\
       \AND
       \name Mark Girolami \email mag92@eng.cam.ac.uk \\
       \addr Department of Engineering \\
       University of Cambridge\\
       Cambridge, CB2 1PZ, UK\\
       }

\editor{}

\maketitle

\begin{abstract}
Gaussian processes are ubiquitous in machine learning, statistics, and applied mathematics. They provide a flexible modelling framework for approximating functions, whilst simultaneously quantifying uncertainty. However, this is only true when the model is well-specified, which is often not the case in practice. In this paper, we study the properties of Gaussian process means when the smoothness of the model and the likelihood function are misspecified. In this setting, an important theoretical question of practical relevance is how accurate the Gaussian process approximations will be given the chosen model and the extent of the misspecification. The answer to this problem is particularly useful since it can inform our choice of model and experimental design. In particular, we describe how the experimental design and choice of kernel and kernel hyperparameters can be adapted to alleviate model misspecification.
\end{abstract}

\begin{keywords}
  Gaussian Processes, Kriging, Nonparametric Regression, Reproducing Kernel Hilbert Space, Sampling Inequality
\end{keywords}


\section{Introduction} \label{sec:introduction}

Gaussian processes (GPs) have found widespread use in machine learning \citep{Rasmussen2006} as they offer flexible and interpretable models with uncertainty quantification.  Applications include reinforcement learning \citep{Kuss2004}, time-series modelling \citep{Roberts2013}, robotics and control \citep{Deisenroth2015}, as well as Bayesian numerical methods including Bayesian quadrature \citep{Briol2019,Kanagawa2019}, Bayesian optimization \citep{Mockus1989,Snoek2012,Bull2011} and Bayesian differential equations solvers \citep{Cockayne2016}. Outside of machine learning, Gaussian process regression was first used in geostatistics \citep{Krige1951,Cressie1990,Matheron1963}, where the procedure was originally known as kriging and is a current active field \citep{Wang2019,Lederer2019}. Gaussian processes are used for tackling problems ranging from computer models \citep{Kennedy2001} to inverse problems \citep{Stuart2010,Stuart2018}, health monitoring \citep{Stegle2008}, engineering design \citep{Forrester2008} and tsunami modelling \citep{Sarri2012}, to name but a few.

In most of the applications above, the central task is to approximate a function of interest given pointwise evaluations of this function which may be corrupted by some unknown noise. To do so, practitioners carefully design their algorithms such that the approximation error decreases at a fast rate in the number of data points. Several modelling choices need to be made, including the selection of a GP model and hyperparameters, of a likelihood, and of the locations at which to obtain data. Making appropriate choices for a given application is an extremely difficult task, and poor choices can lead to poor empirical performance. One way to tackle this problem in a unified manner is to turn to theoretical convergence guarantees which explicitly account for these modelling choices, and to select specific algorithms which minimise upper bounds on the approximation error. 

Of course, this approach is only sensible if the bounds apply to the problem at hand, but most existing bounds are rather restrictive and require assumptions which users might not be able to verify. The novel contributions of this paper include convergence guarantees in the presence of two common modelling errors, and suggestions as to how to construct algorithms which can mitigate these. 

The first is \emph{likelihood misspecification}, meaning that the observations follow a distribution which is different from the one assumed by the model. This often occurs because conditioning of Gaussian process means on data is only possible in closed-form if assuming the data is noiseless, or contains independently and identically distributed Gaussian noise with known variance. For more complex observations, such as input-dependent noise \citep{Goldberg1998,Le2005} or distributions with heavy tails \citep{Vanhatalo2009}, a closed-form expression for the mean is not available. In order to maintain a closed-form expression, practitioners often use simplistic models which may not be a faithful representation of the data-generating process, leading to a lack of robustness and poor approximations \citep{Goldberg1998,Jylanki2011}.

The second is \emph{smoothness misspecification}, meaning that the Gaussian process mean is either too rough or too smooth relative to the target function. Here, the smoothness of a function is measured in terms of number of derivatives in the sense of Sobolev spaces. This is known to guide the rate of convergence of Gaussian process approximations, with faster rates attainable for smoother functions if the mean and covariance functions are chosen appropriately. However, for many of the aforementioned applications, it is difficult to identify the smoothness of the target function. This commonly leads to sub-optimal choices of GPs, and as a result potentially slower convergence rates. 

Our novel convergence guarantees highlight the impact that both types of misspecification can have on rates of convergence, and can provide guidance on model choice for practitioners at risk of misspecification. In particular, the impact of the experimental design and covariance function is made clear in the bounds. The bounds employ results from the scattered data approximation (SDA) literature \citep{Wendland2005}, which has been applied to GP related methods in numerous works \citep{Bull2011,Stuart2018,Xi2018MultiOutput,Briol2019,Teckentrup2019,Tuo2019}. Smoothness misspecification has previously been considered in this context \citep{Narcowich2006,Teckentrup2019,Kanagawa2019} as has corrupted observations \citep{Rieger.Zwicknagl:2009,Arcangeli2007}. However, the interplay of smoothness and likelihood misspecification has not been investigated to date. Our paper therefore unifies and extends existing work in this area.

The main results in this paper are Theorem \ref{thm:Misspecified_Sobolev_Interpolation}, Theorem \ref{thm:Misspecified_Sobolev_Random_Regression}, Theorem \ref{thm:Misspecified_Sobolev_Det_Regression} and Theorem \ref{thm:misspec_likelihood_inter} which, respectively, concern the cases when a likelihood reflecting no noise is correctly assumed, a Gaussian likelihood is correctly assumed, a Gaussian likelihood is incorrectly assumed and a likelihood of no noise is assumed but there is arbitrary corruption. In each case the results also facilitate the smoothness of the target function being different from the smoothness of the approximating function. To highlight the relevance of these novel bounds, in Section \ref{sec:bayesian_pn} we derive implications for the convergence of Bayesian numerical methods based on GPs, specifically Bayesian quadrature and Bayesian optimization.

The paper is structured as follows. Section \ref{sec:background} reviews background material on GPs and reproducing kernel Hilbert spaces. Section \ref{sec:assumptions} introduces and discusses assumptions on the design region, design points and GP model required for our theory to hold. Existing convergence results are also covered. Section \ref{sec:theory} contains the error bounds. Section \ref{sec:bayesian_pn} demonstrates implications of these bounds for Bayesian quadrature and Bayesian optimization. Section \ref{sec:conclusions} provides concluding remarks.

\section{Background on Gaussian Processes and Kernel Methods}\label{sec:background}

In this section, we start by introducing notation for GPs conditioned on data and recall some of their properties, then we highlight how the smoothness of GPs can be measured using Sobolev spaces.

\subsection{Interpolation and Regression}

Let $(\Omega, \mathcal{F},\mathbb{P})$ be a probability space and $\X \subseteq \mathbb{R}^d$. A Gaussian process \citep{Stein1999,Rasmussen2006} is a stochastic process $g: \mathcal{X} \times \Omega \rightarrow \mathbb{R}$ whose properties are captured by its \emph{mean} $m:\mathcal{X} \rightarrow \mathbb{R}$, $m(x)= \mathbb{E}[g(x,\cdot)]$, and \emph{covariance function} $k\colon\X\times\X\rightarrow\bbR$, $k(x,x')=\mathbb{E}[(g(x,\cdot)-m(x))(g(x',\cdot)-m(x'))]$. The defining property of a GP with mean $m$ and covariance $k$, denoted $g\sim\mathcal{GP}(m,k)$, is that for any finite set of points $X = \{x_{i}\}_{i=1}^{n}$, the random vector $(g(x_{1},\cdot),\ldots,g(x_{n},\cdot))^{\top}\in\mathbb{R}^{n}$ follows the multivariate normal distribution $\mathcal{N}(m_{X},k_{XX})$ with mean vector given by $m_{X} = (m(x_{1}),\ldots,m(x_{n}))^{\top}\in\mathbb{R}^{n}$ and covariance matrix $k_{XX} = (k(x_{i},x_{j}))_{1\leq i,j\leq n}\in\mathbb{R}^{n\times n}$.

The covariance function is symmetric ($k(x,x') = k(x',x) \: \forall x,x'\in\X$) and positive definite ($\forall n\in\mathbb{N}$, $a_{1},\ldots,a_{n}\in\mathbb{R}$, $\{x_{i}\}_{i=1}^n \subset \X$, $\sum_{i,j = 1}^{n}a_{i}a_{j}k(x_{i},x_{j})\geq 0$) and we shall call any function satisfying these two properties a \emph{kernel}. A GP induces a probability measure over functions which we denote $\Pi_{k}$. A significant advantage of GPs over other stochastic processes is our ability to condition on data in closed form in some settings. Let $\fGP \sim \mathcal{GP}(m,k)$, $X = (x_1,\ldots,x_n)^\top$ be a finite collection of design points and for some deterministic function $f$ denote by $f_{X} = (f(x_1),\ldots,f(x_n))^\top$ the corresponding function values. Conditioning the stochastic process $\fGP$ on noisy function evaluations, often called the \textit{regression setting}, observed with independent, identically distributed Gaussian noise $\varepsilon_{i}$ with mean zero, variance $\sigma^{2}$, gives another GP, denoted $\fGP\;|\;X,y\sim \mathcal{GP}(\bar{m}_{\sigma^{2}},\bar{k}_{\sigma^{2}})$, where $y_{i} = f(x_{i}) + \varepsilon_{i}$, $\bar{m}_{\sigma^{2}}(x) = m(x) + k_{xX}(k_{XX}+\sigma^{2}I_{n\times n})^{-1}(y-m_{X})$, $\bar{k}_{\sigma^{2}}(x,x') = k(x,x') - k_{xX}(k_{XX}+\sigma^{2}I_{n\times n})^{-1}k_{Xx'}$, with $k_{xX} = (k(x,x_1),\ldots,k(x,x_n))$ and $I_{n \times n}$ is an identity matrix of size $n$. This will also be the case if $f_{X}$ is observed without noise, also called the \textit{interpolation setting}, in which case the conditioned GP is denoted $\fGP\;|\; X,f_{X}\sim \mathcal{GP}(\bar{m},\bar{k})$ where $\bar{m}(x) = k_{xX}k_{XX}^{-1}(f_{X}-m_{X})$ and $\bar{k} = \bar{k}_{0}$. 

Although the expressions for $\bar{m}$ and $\bar{m}_{\sigma^{2}}$ were obtained through conditioning of a GP, they can also arise through non-probabilistic function approximation schemes. The function spaces used are the reproducing kernel Hilbert spaces (RKHS) \citep{Berlinet2004} associated with the kernel $k$ of the GP. A Hilbert space of functions on $\X$, denoted $\mathcal{H}(\X)$, with inner product $\langle\cdot,\cdot\rangle_{\mathcal{H}(\X)}$ and norm $\|\cdot\|_{\calH(\X)}$ is called a reproducing kernel Hilbert space if there exists a kernel $k$, such that the following two conditions are satisfied (i) $\forall x\in\X$ we have $k(\cdot,x)\in \mathcal{H}(\X)$, and (ii) $\forall x\in\X$ and $\forall f\in \mathcal{H}(\X)$, we have $\langle f,k(\cdot,x)\rangle_{\mathcal{H}(\X)} = f(x)$ which is called the reproducing property. By the Moore-Aronszajn theorem, the relationship between kernels and RKHS is one-to-one, so we denote the RKHS by $\mathcal{H}_{k}(\X)$ instead of $\mathcal{H}(\X)$. 

The optimisation problem for the interpolation setting is the following constrained problem
\begin{align*}
	\argmin_{g\in \mathcal{H}_{k}(\X)}\norm{g}_{\mathcal{H}_{k}(\X)}^{2} \text{ such that } g(x_{i}) = f(x_{i}) \; \forall i \in \{1,\ldots,n\}.
\end{align*} 
The optimisation problem corresponding to regression is similar but does not require the approximating function to be exactly equal to observed data at the observation points
\begin{align*}
	\argmin_{g\in \mathcal{H}_{k}(\X)}S(g,\lambda_{n},\X)  = \argmin_{g\in \mathcal{H}_{k}(\X)}\frac{1}{n}\sum_{i = 1}^{n}(g(x_{i}) - y_i)^{2} + \lambda_{n}\norm{g}_{\mathcal{H}_{k}(\X)}^{2}.
\end{align*} 
The fit at $X$ and the complexity of the approximating function are traded off using a regularisation parameter $\lambda_{n} > 0$. When $\varepsilon_{i} = 0$ $\forall i \in \{1,\ldots,n\}$, kernel regression is sometimes referred to as approximate kernel interpolation \citep{WendlandRieger2005} due to the fact that it differs from kernel interpolation as $\lambda_{n} > 0$. For further discussion regarding the relationship between kernel methods of approximating functions and GP methods, see e.g.\ \citep{Berlinet2004,SCHEUERER2013,Kanagawa2018Review}. 

To unify notation, given a function $m$, a vector $\varepsilon\in\bbR^{n}$ and $\lambda > 0$, define the function
\begin{align}\label{eq:def_approximation}
	R_{f,\lambda,\varepsilon}^{m}(x) \coloneqq m(x) + k_{xX}(k_{XX} + \lambda I_{n \times n})^{-1}(f_{X} + \varepsilon -m_{X}),
\end{align} 
then $\bar{m} = R^{m}_{f,0,0}$ and $\bar{m}_{\sigma^{2}} = R^{m}_{f,\sigma^{2},\varepsilon}$ and the functions solving the kernel interpolation and regression problems are $R^{0}_{f,0,0}, R^{0}_{f,n\lambda_{n},\varepsilon}$ respectively and for ease of notation we will drop the variables which are zero throughout the rest of the paper. 

\subsection{The Smoothness of Reproducing kernel Hilbert Spaces}

As previously mentioned, we measure the smoothness of functions using Sobolev spaces, and this smoothness will control approximation rates. For $\tau\in\bbN, q\in[1,\infty]$ and a domain $\X\subseteq\bbR^{d}$, meaning a non-empty, open, connected set, define the integer order Sobolev space $W^{\tau}_{q}(\X)$
\begin{align*}
	W^{\tau}_{q}(\X) = \big\{f\in L^{q}(\X)\colon \forall \alpha \in  \mathbb{N}^{d}\: \lvert\alpha\rvert \leq \tau, D^{\alpha}f\in L^{q}(\X)\big\},
\end{align*}
where $\bbN^{d}$ is the set of multi-indices of size $d$, $\lvert\alpha\rvert = \sum_{i=1}^{d}\alpha_{i}$ and $D^{\alpha}$ is the weak derivative operator corresponding to $\alpha$, see e.g.\ \cite{Arcangeli2011}. Sobolev spaces can also be defined for $\tau\notin\bbN$ through a standard interpolation space argument \citep{Arcangeli2011}. In particular for $\tau >d/2$, the Sobolev space $W^{\tau}_{2}(\bbR^{d})$ may be written as
\begin{align*}
		W^{\tau}_{2}(\mathbb{R}^{d}) \coloneqq \left \{f\in L^{2}(\mathbb{R}^{d})\colon \norm{f}_{W^{\tau}_{2}(\mathbb{R}^{d})}^{2}\coloneqq \int_{\mathbb{R}^{d}}\left(1+\norm{x}_2^{2}\right)^{\tau}\lvert\hat{f}(x)\rvert^{2}dx < \infty \right\},
\end{align*}
where $\hat{f}$ is the Fourier transform of $f$ and $\norm{\cdot}_2$ denotes the Euclidean norm. Our theoretical results shall apply to functions defined over $\X\subseteq\bbR^{d}$, recall the definition of $W^{\tau}_{2}(\X)$ via restriction
\begin{align*}
	W^{\tau}_{2}(\X) \coloneqq \left\{f\colon\X\rightarrow\mathbb{R}^{d} \colon \exists f^{\circ}\in W^{\tau}_{2}(\mathbb{R}^{d})\text{ such that } f^{\circ}(x) = f(x) \; \forall x \in \X \right\},
\end{align*}
with norm
\begin{align*}
    \hspace{4mm} & \norm{f}_{W^{\tau}_{2}(\X)} = \inf \left\{\norm{f^{\circ}}_{W^{\tau}_{2}\left({\bbR}^{d}\right)}\colon f^{\circ}\in W^{\tau}_{2}({\bbR}^{d})\text{ and }f^{\circ}(x) = f(x) \; \forall x \in \X \right\}.
\end{align*}

Similarly, starting from $\calH_{k}(\bbR^{d})$, we may define $\calH_{k}(\X)$ via restriction. This function space is still an RKHS with the kernel being the restriction of $k$ to $\X\times\X$ \citep[Theorem 6]{Berlinet2004}. If $\calH_{k}(\bbR^{d})$ is norm equivalent to $W^{\tau}_{2}(\bbR^{d})$ and $\X$ is regular in some sense to be outlined in Section \ref{sec:assumptions}, then $\calH_{k}(\X)$ is norm equivalent to $W^{\tau}_{2}(\X)$. We call a kernel $\tau$\emph{-smooth} if $\calH_{k}(\X)$ is norm equivalent to $W^{\tau}_{2}(\X)$.

A frequently used example of $\tau-$smooth kernel is the Mat\'ern kernel. For $\tau > d/2$, it is given by
\begin{align}
	& k_{\Mat}(x,x')   =\nonumber\\ & \qquad\qquad\frac{2^{1-(\tau-\frac{d}{2})}A}{\Gamma(\tau-\frac{d}{2})} \left(\sqrt{2\left(\tau-\frac{d}{2}\right)}\frac{\norm{x-x'}_{2}}{l}\right)^{\tau-\frac{d}{2}}  K_{\tau-\frac{d}{2}} \left(\sqrt{2\left(\tau-\frac{d}{2}\right)}\frac{\norm{x-x'}_{2}}{l}\right),
\end{align}
where $l > 0, A > 0$. 
Here, $\Gamma$ is the Gamma function and $K_{\tau-d/2}$ is the modified Bessel function of second kind of order $\tau-d/2$. The parameter $l$ is called the lengthscale, $A$ is the amplitude. If $\tau = m + 1/2 + d/2$ for some $m\in\mathbb{N}$ then the expression drastically simplifies thanks to properties of Bessel functions \citep{Kanagawa2018Review}. Another kernel which has RKHS norm equivalent to a Sobolev space is the Wendland kernel \citep[Chapter 9]{Wendland2005}. This kernel is popular in the SDA literature due to the fact that it is compactly supported and thus offers favourable computational advantages. Both these kernels are translation invariant meaning there exists a function $\phi$ such that $k(x,y) = \phi(x-y)$.

\section{Experimental Setting}\label{sec:assumptions}

We now highlight assumptions on the experimental setting for which our theoretical results hold. Section \ref{sec:point_selection} outlines properties of the domain over which the approximation occurs and of the points at which the target function is evaluated, Section \ref{sec:assumptions_GP} outlines properties of the GP model and Section \ref{sec:literature_review} compares our assumptions to those in related literature.

\subsection{The Experimental Design} \label{sec:point_selection}

Throughout this paper, we will follow \citet{Arcangeli2011} and assume the domain $\X$ is bounded and satisfies an $(R,\delta)$ interior cone condition with a Lipschitz boundary. Such domains will be called $\mathcal{L}(R,\delta)$-domains, see Section \ref{sec:assumptions_domain} in the Appendix for full details. This is a standard assumption to make when applying scattered data approximation type results \citep{Kanagawa2019,Arcangeli2011,Teckentrup2019,Narcowich2006}. As discussed by \citet{Stein1970}, any open bounded convex set in $\bbR^{d}$ has Lipschitz boundary. This includes for example any open hypercube $(0,1)^d$ and indeed any hyper cuboid. An example of a non-Lipschitz boundary is a domain of two polygons with boundaries touching at only one point.

The experimental design problem is well studied for GP surrogate models \citep{Sacks1989,Santner2018} and an intuitive requirement is that the point set $X$ somehow covers the whole domain $\X$. Designs based on this rule-of-thumb are usually referred to as space-filling designs, see the review by \citet{Pronzato2012}.

 Given a bounded set $\X\subseteq\mathbb{R}^{d}$ and a collection of points $X\subseteq\X$, the \emph{fill distance} $h_X$, \emph{separation radius} $q_X$ and \emph{mesh ratio} $\rho_X$ are defined as
	\begin{align*}
		h_{X} \coloneqq\sup_{x\in\X}\inf_{y\in X}\norm{x-y}_2,	\qquad
		q_{X} \coloneqq \min_{\substack{x,y\in X\\x\neq y}}\frac{1}{2}\norm{x-y}_2, \qquad
		\rho_{X} = \frac{h_{X}}{q_{X}}.
	\end{align*}
	
A small fill distance guarantees that no point in the domain $\X$ is too far away from a point in the design $X$, while a large separation radius guarantees that points in the design $X$ are not too close to one another and the mesh-ratio measures the uniformity of the points. 
All of our bounds will be expressed in terms of these quantities. A sequence of points sets $\{X_{n}\}_{n\in\bbN}$ is said to be \emph{quasi-uniform}, if $\exists\: C > 0$ such that $C q_{X_{n}} \geq h_{X_{n}}$ $\forall n\in\mathbb{N}$. Note that quasi-uniformity is equivalent to a bounded mesh-ratio $\rho_{X{n}}$. Quasi-uniform points achieve optimal rates for the fill distance on $\mathcal{L}(R,\delta)$-domains, namely \citet[Satz 2.1.7]{Muller2008} showed that $\exists C_1,C_2 > 0$ such that $C_1 n^{-1/d} \leq h_{X_n} \leq C_2 n^{-1/d}$ $\forall n \in \mathbb{N}$. We now provide several examples of point sets for which results on the fill distance or separation radius are available
\begin{itemize}

	\item Regular grid points in a hypercube $\X = (0,1)^d$ form a quasi-uniform point set \citep{Johnson1990}.

	\item Random points sampled according to some probability measure on $\X$ can be shown to decrease the fill distance at a near-optimal rate in expectation. Indeed, \citet{Oates2016} showed that on a $\mathcal{L}(R,\delta)$-domain, for any $\epsilon > 0$, $\mathbb{E}[h_{X_{n}}] = \mathcal{O}(n^{-1/d + \epsilon})$ whenever the density $p>0$ on all of $\X$. 

	\item Points chosen in a restricted greedy fashion to minimise the GP posterior variance for a $\tau$-smooth kernel with $\tau > d/2+1$ result in quasi-uniform points \citep{Wenzel2019}. 

	\item Another possible choice are quasi-Monte Carlo (QMC) point sets. \sloppy Since quasi-uniformity as defined above is not studied in QMC, it is unclear when common QMC point sets are quasi-uniform. However, several special cases are known, see e.g.\ \citep{Breger2018} for quasi-uniform QMC point sets on compact Riemmanian manifolds.

	\item Some design schemes aim to minimise energy functionals. For the case of the Riesz energy, \citet{Hardin2012} showed that minimum energy point sets on compact metric spaces can be quasi-uniform.

	\item The seminal work of \citet{Johnson1990} termed points globally minimising $h_X$ ``minimax-distance designs'', and points globally maximising $q_X$ ``maximin-distance designs''. 

\end{itemize}
There are several popular choices of point sets for which exact rates for $h_{X_n}$ or $q_{X_n}$ are unknown, but which minimise these quantities numerically. The bounds in our paper clearly motivate these designs. We now present several examples:
\begin{itemize}

	\item Smolyak sparse grids, which originate from the partial differential equations literature, are also popular in the GP literature. It was shown by \citet[Theorem 3.9]{Teckentrup2019} that these points are marginally quasi-uniform when projected onto the coordinate axis, but these will not be quasi-uniform in general.
	
	\item Latin hypercube designs (LHDs) \citep{McKay1979}. Unfortunately, these are not necessarily quasi-uniform point sets. However, several authors have proposed what they call maximin and minimax LHDs \citep{Morris1995,RoshanJoseph2008,Wang2018}, which search the space of LHDs for a design optimising the fill distance or separation radius. 

	\item Many designs are model-based, the point sets depend on properties of the GPs. Two popular examples include D-optimal designs, which aim to minimise the differential Shannon information, and G-optimal designs which are selected to minimise the maximum variance of the predicted values. It was shown by \citet{Johnson1990} that these choices are asymptotically equivalent to minimax or maximin design when taking a radial kernel with lengthscale going towards zero. 

\end{itemize}

\subsection{The Gaussian Process Model and Hyperparameter Selection}\label{sec:assumptions_GP}

Let $m(\theta)$ and $k(\theta)$ denote the mean function and covariance kernel in the GP model parameterised by some $\theta\in\Theta\subset\bbR^{d_{\Theta}}$. In practice, it is common to learn hyperparameters as more data points are observed, and our convergence results will allow for such adaptivity. There exists a vast literature on parameter estimation for GPs; for an overview, see e.g.\ \citep[Chapter 6]{Stein1999}, which includes a detailed discussion of Mat\'ern kernels, or \citep[Chapter 5]{Rasmussen2006}. 

For the mean function $m(\theta)$, it is common to use a parametric model whose parameters are estimated using least-squares. Of course, other methods, such as empirical-risk minimisation and gradient-based optimisation could also be used. For the covariance function $k(\theta)$, parameters controlling lengthscales, amplitudes and smoothness need to be estimated. Common approaches include maximum marginal likelihood estimation, sometimes referred to as empirical Bayes, and cross-validation. In Bayesian settings, it is also common to provide a full prior on these hyperparameters and consider a predictive distribution taking into account uncertainty in the parameters. 

Our bounds will be independent of the method used for parameter estimation, following the approach of \citet{Teckentrup2019}. The convergence rates will only depend on how the smallest and largest smoothness of the approximation function $R^{m}_{f,\lambda,\varepsilon}$ for $\theta\in\Theta$ and the corresponding norm-equivalence constants. For this reason, we will use the notation $R^{m}_{f,\lambda,\varepsilon}(\theta)$ to emphasise the dependence on the parameter values. If $k(\theta)$ is $\tau(\theta)$-smooth, then we denote the norm equivalence constants by
\begin{align}\label{eq:norm_equivalence_constants}
	C_{l}(\theta)\norm{\cdot}_{\calH_{k(\theta)}(\X)}\leq \norm{\cdot}_{W^{\tau(\theta)}_{2}(\X)}\leq\ C_{u}(\theta)\norm{\cdot}_{\calH_{k(\theta)}(\X)}.
\end{align}
Assume that the parameter estimation method gives a sequence of hyperparameters $\{\theta_{n}\}_{n=1}^{\infty}$ so that once the $n$-th data point has been observed, the parameters $\theta_{n}$ are used. Following \citet{Teckentrup2019} given $N\in\bbN$ define $\tau^{-}_{k}\coloneqq\inf_{n\geq N}\tau(\theta_{n})$, $\tau^{+}_{k}\coloneqq\sup_{n\geq N}\tau(\theta_{n})$ and $C_{N} = \sup_{n\geq N}C_{u}(\theta_{n})C_{l}(\theta_{n})^{-1}$. This set of extreme values is denoted by $\Theta_{N}^{*} = \{\tau_{k}^{+},\tau_{k}^{-},C_{N}\}$. These quantities represent the extremes of the smoothness of the kernel and the ratio of norm equivalence constants after the $N$-th data point has been observed. Of course, these parameters are often selected as data is observed. As a result, to bound expressions for $n\geq N$, we need to ensure the parameter selection methods used does not result in extreme values regardless of the data observed. We note that in the context of Gaussian regression, the observation noise parameter $\sigma$ could also be estimated from data, leading to a sequence of parameters $\{\sigma_{n}\}_{n=1}^\infty$. A common approach is to maximise the marginal likelihood.

\subsection{Comparison to Related Literature}\label{sec:literature_review}

Now that we have discussed our experimental setting, we briefly remark on connections with related literature using kernel approximations. In our work, the target function is modelled as an unknown deterministic function, possibly corrupted by noise, with no assumption on the distribution of the design points. The error bounds shall be expressed in terms of the smoothness of the approximating function, the smoothness of the true function, and geometric properties of the design points. 

The closest approach to the work in this paper can be found in the scattered data approximation \citep{Wendland2005} literature. Indeed, our proofs harness multiple results from the field. The main difference is that we tackle the combination of corrupted data, misspecified smoothness and misspecified likelihood, whereas existing works have only covered these cases individually. Examples include approximate interpolation \citep{WendlandRieger2005}, deterministic corruption \citep{Rieger.Zwicknagl:2009} and random error satisfying a regularity condition \citep{Arcangeli2011,Utreras1988}. A framework for managing smoothness misspecification was presented by \citet{Narcowich2006} which uses quasi-uniform point placement. Adapting hyperparameters with no observation corruption was studied by \citet{Teckentrup2019}. 

Statistical learning theory \citep{Steinwart2008,Cucker2007} takes the view of approximation as an optimisation problem in an RKHS, outlined in Section \ref{sec:background}, with the target function specified by some joint probability distribution on the input and output spaces. A sampling distribution for the location of the data points is assumed, which is employed as the weight measure for the norm used to measure error of the approximation. This statistical assumption is the main difference with the SDA view, which we use, since in SDA the error bounds are expressed in terms of the experimental design. Additionally the remedy for smoothness misspecification in SLT is altering the parameters of the approximating function \citep{Steinwart2009} as opposed to quasi-uniform points. 

Nonparametric regression \citep{Gyrfi2002,Wahba1990} is an approach to regression which assumes no parametric underlying form for the target function. Such techniques bare a lot of resemblance to SDA and statistical learning theory, and indeed have similar methods for obtaining approximating functions. Sometimes the unknown function is assumed to be a draw from a distribution over a space of functions, for example in Kriging \citep{Matheron1963,Stein1999}. This is clearly different from the SDA paradigm which assumes the quantity of interest is a fixed deterministic function. As done in statistical learning theory a sampling distribution of data locations is also often assumed. Within this nonparametric paradigm, an important subclass is Bayesian nonparametric regression \citep{Ghosal2017,Gine2016}. These take the Bayesian view of modelling by placing a prior measure on the unknown target function, and using a likelihood and Bayes' rule to obtain a posterior measure on the unknown quantity given observed data. Contraction of the entire posterior measure is studied which is stronger than contraction of the posterior mean function, the focus of Section \ref{sec:theory}. Again the assumption of a sampling distribution of the points and the method of dealing with smoothness misspecification makes this modelling paradigm distinct from the one considered in this paper.

\section{Convergence Guarantees for Gaussian Process Means} \label{sec:theory}

We are now ready to present the main results of the paper. All of the proofs are provided in the appendix. We will use the following notation $x \wedge y = \min(x,y)$, $x \vee y = \max(x,y)$, $(x)_+ = \max(x,0)$. $\lfloor x \rfloor$ denotes the integer part of $x$ and $\lceil x \rceil$ the ceiling of $x$. The integrability parameter in the Sobolev norms will be $q\in[1,\infty]$. Following \citet{Arcangeli2011} define $\tau_{0} \coloneqq \tau - d(1/2-1/q)_{+}$ and $\tau^{*} \coloneqq \tau_{0}$ if $\tau\in\mathbb{N}$ and either $2<q<\infty$ and $\tau_{0}\in\mathbb{N}$, or $q=2$, else we will have $\tau^{*} \coloneqq \lceil\tau_{0}\rceil -1$. Finally, for $a,b > 0$, let $\tilde{a} = a  -\lfloor a \rfloor$ and define: $\Lambda_{a,b}\coloneqq (b\tilde{a}(1-\tilde{a}))^{1/b}$, if $\tilde{a}\in(0,1)$ and $\Lambda_{a,b}\coloneqq1$ if $\tilde{a} = 0$.

\subsection{Convergence Guarantees for Interpolation} \label{sec:noiseless_convergence}

This section considers approximations with noiseless function evaluations observed at a finite collection of $n$ points $X_{n}\subset\X$. We will assume that the likelihood is well specified in that the data is indeed noiseless. The interpolation setting is of particular interest since it leads to a closed-form approximation, and corresponds to the use of GPs for range of applications including to computer models \citep{Kennedy2001}, Bayesian inverse problems \citep{Teckentrup2019} and Bayesian numerical methods \citep{Bull2011,Xi2018MultiOutput,Briol2019,Chen2019}. From a practical point of view, the result provide insights into point-picking strategies and hyperparameter selection for these applications. Before stating the first bound, we summarise all of the necessary assumptions which were mentioned in the previous section

\begin{assumption}[\textbf{Assumptions on the Domain}]\label{assumption:domain}
	$\X$ is an $\mathcal{L}(R,\delta)$-domain for \sloppy some $R>0$ and $\delta \in (0,\pi/2)$.
\end{assumption}
\begin{assumption}[\textbf{Assumptions on the Kernel Parameters}]\label{assumption:parameters}
	\sloppy Given $N\in\bbN$, for $n\geq N$,  $k(\theta_{n})$ is $\tau(\theta_{n})$-smooth and the elements of $\Theta^{*}_{N}$ are finite with $\tau^{-}_{k}> d/2$. 
\end{assumption}

\begin{assumption}[\textbf{Assumptions on the Kernel Smoothness Range}]\label{assumption:smoothness}
	\sloppy Given $N\in\bbN$, the set $\{\tau(\theta_{n})\}_{n\geq N}$ has finitely many values. 
\end{assumption}

\begin{assumption}[\textbf{Assumptions on the Target Function and Mean Function}]\label{assumption:mean}

\sloppy The target function satisfies $f\in W^{\tau_{f}}_{2}(\X)$ for some $\tau_{f} > d/2$ and given $N\in\bbN$ the mean function satisfies $\sup_{n\geq N}\norm{m(\theta_{n})}_{W^{\tau_{f}}_{2}(\X)}<\infty$.
\end{assumption}

Assumption \ref{assumption:domain} ensures that the domain is sufficiently regular to use extension and embedding theorems. For a discussion about examples of domains satisfying the assumptions see Section \ref{sec:assumptions_domain}. Assumption \ref{assumption:parameters} ensures that the RKHS of $k(\theta_{n})$ is norm equivalent to a Sobolev space with smoothness $\tau(\theta_{n})$ and that the parameters for the model are not so extreme as to result in arbitrarily smooth or arbitrarily rough functions. This assumption also concerns the ratio of the norm equivalence constants to ensure that their ratio is finite. For the case of $k$ being a Mat\'ern kernel, a sufficient condition was given by \citet[Lemma 3.4]{Teckentrup2019} which shows that $C_{N}\leq \sup_{n\geq N}\max(l_{n},l_{n}^{-1})$ where $l_{n}$ is the lengthscale when using parameter setting $\theta_{n}$.

The $N$ term facilitates a ``burn-in'' period for narrowing down the desired range of hyperparameters. Assumption \ref{assumption:smoothness} is required in order to provide a uniform bound over parameter values. The assumption is satisfied in the common scenario where cross validation is used for smoothness parameter selection where there is a finite candidate set of smoothness parameters. For example, the widely used Mat\'ern kernel has a convenient closed form for $\tau = m + 1/2 + d/2$ for $m\in\bbN$, whereas other smoothness level require evaluations of Bessel functions which is computationally challenging. In practice, it is therefore very common to focus on $\{\tau(\theta_{n})\}_{n\geq N} = \{m+1/2+d/2\}_{m\in M}$ for some finite set $M\subset\bbN$. We note that Assumption \ref{assumption:smoothness} is not required by \citet{Teckentrup2019} since weaker sampling inequalities depending only on the integer part of the smoothness parameter were used in that paper. Assumption \ref{assumption:mean} ensures that the target function has a minimal level of regularity and that the parameterised mean function used in the prior GP is at least as smooth as the target function.
 
We are now ready to state our main result for GP interpolation. This will be split into two parts covering the well-specified ($\tau_{f}\geq \tau^{+}_{k}$) and misspecified ($\tau_{f} < \tau^{+}_{k}$) smoothness settings. 

\begin{theorem}\label{thm:Misspecified_Sobolev_Interpolation}
Fix $N\in\bbN$ and suppose Assumptions 1-4 hold. Let $q\in[1,\infty]$ and $s\in[0,(\tau_{f}\wedge\tau^{-}_{k})^{*}]$. Then, $\exists  C_{0},h_{0} > 0$ such that $\forall n\geq N$, $\forall X_{n}\subseteq\X$ with $h_{X_{n}}\leq h_{0}$, when $\tau_{f}\geq \tau^{+}_{k}$
\begin{align*}
		&\bignorm{f-R^{m}_{f}(\theta_{n})}_{W^{s}_{q}(\X)}  \leq Ch_{X_n}^{\tau^{-}_{k} - s - d \left(\frac{1}{2} - \frac{1}{q}\right)_{+}}\left(\norm{f}_{W^{\tau_{f}}_{2}(\X)} + \sup_{n\geq N}\bignorm{m(\theta_{n})}_{W^{\tau_{f}}_{2}(\X)}\right),
\end{align*}
 and when $\tau_{f} < \tau^{+}_{k}$
\begin{align*}
		&\bignorm{f-R^{m}_{f}(\theta_{n})}_{W^{s}_{q}(\X)}  \leq Ch_{X_n}^{(\tau_{f}\wedge\tau^{-}_{k}) - s - d \left(\frac{1}{2} - \frac{1}{q}\right)_{+}}\rho_{X_{n}}^{(\tau^{+}_{k}-\tau_{f})}\left(\norm{f}_{W^{\tau_{f}}_{2}(\X)} + \sup_{n\geq N}\bignorm{m(\theta_{n})}_{W^{\tau_{f}}_{2}(\X)}\right),
\end{align*}
where $C = C_{0}\Lambda_{s,q}$ with $C_{0} = C_{0}(\X, d,\tau_{f},q,\Theta^{*}_{N})$ and $h_{0} = h_{0}(R,\delta,d,\tau_{f},\Theta^{*}_{N})$. 
\end{theorem}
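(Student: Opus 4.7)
The plan is to combine a sampling inequality on $\mathcal{L}(R,\delta)$-domains with the minimum-RKHS-norm characterisation of the GP interpolant, supplemented in the misspecified regime by a Narcowich--Ward--Wendland-style construction of a smoother surrogate interpolant. Because $\lambda=0$ and the data is noiseless, the residual $g \coloneqq f - R^{m}_{f}(\theta_{n})$ vanishes identically on $X_{n}$. Hence the sampling inequality of \citet{Arcangeli2011}, which applies on $\mathcal{L}(R,\delta)$-domains, gives, for $h_{X_{n}} \leq h_{0}$ and any $\tau$ with $s\in[0,\tau^{*}]$,
\[
\bignorm{g}_{W^{s}_{q}(\X)} \;\leq\; C\,\Lambda_{s,q}\,h_{X_{n}}^{\tau - s - d(1/2 - 1/q)_{+}}\bignorm{g}_{W^{\tau}_{2}(\X)};
\]
this is what forces the admissible range of $s$ in the statement. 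It then suffices to estimate $\norm{g}_{W^{\tau}_{2}(\X)}$ with $\tau = \tau^{-}_{k}$ in the well-specified regime and with $\tau = \tau_{f}\wedge\tau^{-}_{k}$ in the misspecified one.

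In the well-specified case $\tau_{f} \geq \tau^{+}_{k}$, one has $f - m(\theta_{n}) \in W^{\tau_{f}}_{2}(\X) \hookrightarrow W^{\tau(\theta_{n})}_{2}(\X)$, so by the norm equivalence \eqref{eq:norm_equivalence_constants} $f - m(\theta_{n}) \in \mathcal{H}_{k(\theta_{n})}(\X)$. The minimum-RKHS-norm property of the GP interpolant then yields $\norm{R^{m}_{f}(\theta_{n}) - m(\theta_{n})}_{\mathcal{H}_{k(\theta_{n})}} \leq \norm{f - m(\theta_{n})}_{\mathcal{H}_{k(\theta_{n})}}$, from which a triangle inequality and \eqref{eq:norm_equivalence_constants} give
\[
\bignorm{g}_{W^{\tau(\theta_{n})}_{2}(\X)} \;\leq\; 2\,C_{u}(\theta_{n})C_{l}(\theta_{n})^{-1}\,\bignorm{f - m(\theta_{n})}_{W^{\tau(\theta_{n})}_{2}(\X)}.
\]
Since $\tau^{-}_{k} \leq \tau(\theta_{n}) \leq \tau^{+}_{k} \leq \tau_{f}$, the left side dominates $\norm{g}_{W^{\tau^{-}_{k}}_{2}(\X)}$ and the right side is dominated by $2C_{N}(\norm{f}_{W^{\tau_{f}}_{2}} + \sup_{n\geq N}\norm{m(\theta_{n})}_{W^{\tau_{f}}_{2}})$. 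Combined with the sampling inequality at $\tau=\tau^{-}_{k}$, this delivers the first bound.

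The main obstacle is the misspecified case $\tau_{f} < \tau^{+}_{k}$, where $f - m(\theta_{n})$ may fail to lie in $\mathcal{H}_{k(\theta_{n})}$ when $\tau(\theta_{n}) > \tau_{f}$, so the minimum-norm argument cannot be applied to $f - m(\theta_{n})$ itself. The remedy is the Narcowich--Ward--Wendland escape estimate, which produces $\tilde{f}_{n} \in \mathcal{H}_{k(\theta_{n})}$ interpolating $f - m(\theta_{n})$ on $X_{n}$ with
\[
\bignorm{\tilde{f}_{n}}_{\mathcal{H}_{k(\theta_{n})}} \;\leq\; C\,\rho_{X_{n}}^{(\tau(\theta_{n}) - \tau_{f})_{+}}\bignorm{f - m(\theta_{n})}_{W^{\tau_{f}}_{2}(\X)}.
\]
The minimum-norm property applied with this $\tilde{f}_{n}$ as competitor bounds $\norm{R^{m}_{f}(\theta_{n}) - m(\theta_{n})}_{\mathcal{H}_{k(\theta_{n})}}$, and the norm equivalence \eqref{eq:norm_equivalence_constants} together with the embedding $W^{\tau(\theta_{n})}_{2}\hookrightarrow W^{\tau_{f}\wedge\tau^{-}_{k}}_{2}$ pushes this into the $W^{\tau_{f}\wedge\tau^{-}_{k}}_{2}$-norm. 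Using $\norm{f}_{W^{\tau_{f}\wedge\tau^{-}_{k}}_{2}}\leq\norm{f}_{W^{\tau_{f}}_{2}}$ and the analogous inequality for $m(\theta_{n})$ to control the remaining pieces of $\norm{g}_{W^{\tau_{f}\wedge\tau^{-}_{k}}_{2}}$, and then invoking the sampling inequality at $\tau = \tau_{f}\wedge\tau^{-}_{k}$, gives the second bound.

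Throughout, uniformity over $n\geq N$ is the delicate bookkeeping task: Assumption \ref{assumption:parameters} uniformises the ratio of norm-equivalence constants via $C_{N}$; Assumption \ref{assumption:smoothness} (finitely many smoothness values) keeps the Arcangeli constants uniform in $n$; and $\rho_{X_{n}} \geq 1$ lets $\rho_{X_{n}}^{(\tau(\theta_{n})-\tau_{f})_{+}}$ be replaced by $\rho_{X_{n}}^{\tau^{+}_{k}-\tau_{f}}$. The essence of the proof is that the escape estimate alone produces an $n$-dependent exponent in $\rho_{X_{n}}$, and the role of the hypothesis structure is exactly to allow this exponent to be uniformised at the price of replacing $\tau(\theta_{n})$ by $\tau^{+}_{k}$.
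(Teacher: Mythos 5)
Your well-specified case is correct and is essentially the paper's argument: the only cosmetic differences are that you apply the sampling inequality to $f-R^{m}_{f}(\theta_{n})$ as a whole rather than first splitting off the mean by the triangle inequality, and that you use the minimum-norm property plus a triangle inequality where the paper uses the Pythagorean identity (Lemma \ref{lem:pythag}); both give the same bound up to a factor of two.

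The misspecified case contains a genuine gap. The escape estimate you invoke does not hold in the form you state it: the Narcowich--Ward--Wendland band-limited surrogate $\tilde f_{n}$ interpolating $f-m(\theta_{n})$ on $X_{n}$ satisfies
\begin{align*}
\bignorm{\tilde f_{n}}_{\calH_{k(\theta_{n})}(\X)}\;\leq\; C\, q_{X_{n}}^{-(\tau(\theta_{n})-\tau_{f})_{+}}\bignorm{f-m(\theta_{n})}_{W^{\tau_{f}}_{2}(\X)},
\end{align*}
with the separation radius, not the mesh ratio (compare the paper's Lemma \ref{lemma:residuals}). Since $q_{X_{n}}^{-(\tau(\theta_{n})-\tau_{f})}=h_{X_{n}}^{-(\tau(\theta_{n})-\tau_{f})}\rho_{X_{n}}^{\tau(\theta_{n})-\tau_{f}}$, your version is stronger by the diverging factor $h_{X_{n}}^{-(\tau(\theta_{n})-\tau_{f})}$, and it cannot be true: for quasi-uniform designs it would give uniformly bounded RKHS norms for interpolants of $f$ along the whole sequence, which would force $f$ into $\calH_{k(\theta_{n})}(\X)$, contradicting $\tau_{f}<\tau(\theta_{n})$. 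With the correct $q_{X_{n}}$ version, your route --- bound $\norm{g}_{W^{\tau_{f}\wedge\tau^{-}_{k}}_{2}(\X)}$ by the surrogate's RKHS norm via the minimum-norm property and then apply Theorem \ref{thm:Arcangeli_SI} at smoothness $\tau_{f}\wedge\tau^{-}_{k}$ --- only yields $h_{X_{n}}^{(\tau_{f}\wedge\tau^{-}_{k})-s-d(1/2-1/q)_{+}}\,q_{X_{n}}^{-(\tau(\theta_{n})-\tau_{f})}$, which misses the claimed bound by the factor $h_{X_{n}}^{\tau(\theta_{n})-\tau_{f}}$. That factor has to be earned by estimating the interpolation error of the surrogate at its \emph{full} smoothness $\tau(\theta_{n})$: writing $f-R_{f}=(f-\tilde f_{n})+(\tilde f_{n}-R_{\tilde f_{n}})$ and using the native-space error bound on the second term produces $h_{X_{n}}^{\tau(\theta_{n})-\tau_{f}}q_{X_{n}}^{-(\tau(\theta_{n})-\tau_{f})}=\rho_{X_{n}}^{\tau(\theta_{n})-\tau_{f}}$. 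This two-term decomposition is exactly what the paper's Theorem \ref{thm:misspecified_inequality} packages and what its proof invokes at $\mu=\tau_{f}$; your argument needs that lemma (or its internal decomposition), not just a norm bound on the surrogate.
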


This theorem is an extension of the result by \citet[Theorem 3.5]{Teckentrup2019} since it holds for a wider range of target functions $f$. In particular, it only requires $\tau_{f}>d/2$ rather than $\lfloor\tau_{f}\rfloor>d/2$, as such, alleviates the issues mentioned by \citet[Remark 3.7]{Teckentrup2019}. The range of the smoothness parameter $s$ in the norm is dictated by $\tau_{f}$, the smoothness of the target function, and $\tau_{k}^{-}$, the minimum smoothness of the approximating function. There is a large freedom in the norm choice, for example a bound for $L^{2}$ approximation can be recovered by setting $s = 0, q = 2$, and $L^\infty$ is recovered with $s=0, q= \infty$. We will see in Section \ref{sec:bayesian_pn} that this flexibility can be useful for applications.

The upper bound holds only when the data points provide a sufficient initial covering of $\mathcal{X}$, as measured via the $h_{0}$ term, see e.g\ \citep[Remark 3.2]{Arcangeli2011} for a discussion of $h_0$. The behaviour of the constant $\Lambda_{s,q}$ is discussed further by \citet[Section 4.2]{Arcangeli2011}. Aside from the exponent of $h_{X_{n}}$, $\Lambda_{s,q}$ is the only term on the right hand side that depends on $s$, therefore the same $C_{0}$ value can be used for different $s$ values. We now highlight how the bound depends on model-specific choices.

\begin{itemize}

\item \emph{Experimental design:} The terms $h_{X_{n}}$ and $\rho_{X_{n}}$ quantify the impact of the experimental design. A detailed discussion of these quantities was provided in Section \ref{sec:point_selection}. In general, the approximation error bound is always minimised by making $h_{X_{n}}$ and $\rho_{X_{n}}$ as small as possible. We recall that the optimal decay of $h_{X_{n}}$ is $n^{-\frac{1}{d}}$ and the optimal case for $\rho_{X_{n}}$ is when it is bounded by a constant independent of $n$. Both of these properties occur when quasi-uniform points are used, and this is therefore a reasonable criterion for point selection. When quasi-uniform points are used, the optimal error rate is obtained in terms of worst case complexity \citep[Theorem 4.17]{Novak2008}.

\item \emph{Kernel smoothness:} The rate of convergence, as a function of $h_{X_n},q_{X_{n}},\rho_{X_{n}}$, is controlled by $\tau_f, \tau_k^+$ and $\tau_k^-$. In general, the larger the value of $\tau_{f}$, the faster the convergence rate can be.  Two regimes are highlighted. When $\tau_{f} < \tau_{k}^{+}$, meaning smoothness is misspecified, then $(\tau_{k}^{+}-\tau_{f})$ penalises overestimation of $\tau_{f}$ by increasing the exponent of $\rho_{X_{n}}$. Therefore, if one believes they are in danger of over estimating the smoothness of the true function, then quasi-uniform points should be used. When $\tau_{f}\geq\tau_{k}^{+}$, we see $\tau_{k}^{-}$ penalises underestimation of $\tau_{f}$ by limiting the exponent of $h_{X_{n}}$.

\item \emph{Other kernel parameters:} The bound can also be helpful when it comes to understanding the impact of adapting hyperparameters which do not change the smoothness of the RKHS. For those, adaptively choosing the hyperparameters does not impact the rate of convergence in $n$, but only constants of the bound. Indeed, It can be seen in the proof that $C_{0}$ depends on the extremes of the norm equivalence constants. 
\end{itemize}

\subsection{Convergence Guarantees for Regression with Gaussian Likelihood} \label{sec:noisy_convergence}

This section considers observations that are corrupted with independently and identically distributed Gaussian noise so the data is $y_i = f(x_i) + \varepsilon_i$ where $\varepsilon_i \sim \mathcal{N}(0,\sigma^2)$. Once again, the mean of the GP conditioned on this data is available in closed-form, and a well-specified likelihood is used. Three further assumptions are required. 

\begin{assumption}[\textbf{Additional Assumptions on Kernel Parameters}]\label{assumption:double_SI_application}
	\sloppy Given $N\in\bbN$ and $\tau_{f} > d/2$ for all $n\geq N$ we have $\tau(\theta_{n})\in (d/2,\tau_{f}]\cup[\lceil\tau_{f}\rceil,\infty)$.
\end{assumption} 
\begin{assumption}[\textbf{Assumption on Small Ball Probabilities}]\label{ass:small_ball}
	Given $N\in\bbN$, $\exists  c,\alpha_{N} > 0$ such that $\Pi_{k(\theta_{n})}\left(\norm{f}_{L^{\infty}\left(\overline{\X}\right)}\leq c \right) \leq \exp(-\alpha_{N})$ $\forall n\geq N$.
\end{assumption}
\begin{assumption}[\textbf{Additional Assumption on the Target Function}]\label{ass:target_extension}
Given $\tau > d/2$, $f$ has an extension $f^{\circ}\in C^{\tau_{f}}(\mathbb{R}^{d})\cap  W^{\tau_{f}}_{2}(\mathbb{R}^{d})$ where $C^{\tau_{f}}(\mathbb{R}^{d})$ is the space of $\tau_{f}$ H\"older continuous functions. 
\end{assumption}
Assumption \ref{assumption:double_SI_application} restricts slightly the smoothness values that $f$ can take. It is required due to the double use of a sampling inequality in our proof, see the proof of Theorem \ref{thm:Misspecified_Sobolev_Det_Regression} for further explanation. This is not a very restrictive assumptions since the length of interval containing disallowed values is less than one. Assumption \ref{ass:small_ball} involves the measure on functions induced by the GP with parameters $\theta$ and ensures the size of the GP samples cannot be uniformly small with arbitrarily high probability, since this would result in a somehow degenerate GP. This assumption is implicitly used by \citet[Theorem 1.2]{Linde1999} which is a key auxiliary result for \citet[Theorem 1]{VanderVaart2011}, which our proof follows closely. In the case where an amplitude parameter is used for the kernel (e.g. $A$ for the Mat\'ern kernel in Section \ref{sec:background}), the assumption is satisfied if this parameter is bounded away from zero. This can be seen by using concentration inequalities for the supremum of a Gaussian process, see e.g.\ \cite[Chapter 2.4]{Gine2016}. It should be noted that the commonly used maximum likelihood procedure can result in $A$ decaying to zero \citep{Karvonen2020}. Assumption \ref{ass:target_extension} concerns the regularity of the target function. This is a requirement for Lemma 4 of \citet{VanderVaart2011} which is an auxiliary result that is employed in our bound, see Appendix \ref{appendix:gaussian_noise} for details.

 We can now present our main theorem for GP regression, which is stated in expectation over the distribution of the noise. Again, we separate the well-specified and misspecified smoothness settings.
\begin{theorem}\label{thm:Misspecified_Sobolev_Random_Regression}
	Fix $N\in\bbN$ and suppose Assumptions 1-\ref{ass:target_extension} hold. Let $q\in[1,\infty]$ and $s \in [0,(\tau_{f}\wedge\tau^{-}_{k})^{*}]$. Then, $\exists  C,h_{0} > 0$ such that $\forall n\geq N$, $\forall X_{n}\subseteq\X$ with $h_{X_{n}}\leq h_{0}$, when $\tau_{f}\geq \tau_{k}^{+}$
	\begin{align*}
		& \mathbb{E}\left[ \bignorm{f-R^{m}_{f,\sigma^{2},\varepsilon}(\theta_{n})}_{W^{s}_{q}(\X)}\right]\\
		& \leq Ch_{X_{n}}^{\frac{d}{\gamma}-s}\bigg[h_{X_{n}}^{\tau^{-}_{k}-\frac{d}{2}}\left(\norm{f}_{W^{\tau_{f}}_{2}(\X)} + \sup_{n\geq N}\bignorm{m(\theta_{n})}_{W^{\tau_{f}}_{2}(\X)}\right) + n^{\frac{1}{2}} h_{X_{n}}^{\tau^{-}_{k}-\frac{d}{2}} + 
		n^{\frac{d}{4\tau^{-}_{k}}}\bigg],
	\end{align*}
	and when $\tau_{f} < \tau_{k}^{+}$
	\begin{align*}
		& \mathbb{E}\left[ \bignorm{f-R^{m}_{f,\sigma^{2},\varepsilon}(\theta_{n})}_{W^{s}_{q}(\X)}\right]\\
		& \leq Ch_{X_{n}}^{\frac{d}{\gamma}-s}\bigg[h_{X_{n}}^{\left(\tau_{f}\wedge\tau^{-}_{k}\right)-\frac{d}{2}}\rho_{X_{n}}^{\left(\tau^{+}_{k}-\tau_{f}\right)_{+}}\left(\norm{f}_{W^{\tau_{f}}_{2}(\X)} + \sup_{n\geq N}\bignorm{m(\theta_{n})}_{W^{\tau_{f}}_{2}(\X)}\right) \\
		& \qquad\qquad\qquad+ n^{\frac{1}{2}} h_{X_{n}}^{\tau^{-}_{k}-\frac{d}{2}} + 
		n^{\left(\frac{1}{2}-\frac{\tau_{f}}{2\tau^{+}_{k}}\right)_{+}\bigvee \left(\frac{d}{4\tau^{-}_{k}}\right)}\bigg],
	\end{align*}
	where $C=C_{0}\Lambda_{s,q}$ with $C_{0} = C_{0}\left(\X,d,\tau_{f},q,\norm{f}_{W^{\tau_{f}}_{2}(\X)},\sup_{n\geq N}\norm{m(\theta_{n})}_{W^{\tau_{f}}_{2}(\X)},\Theta_{N}^{*}\right)$, $h_{0} = h_{0}\left(R,\delta,d,\tau_{f},\Theta^{*}_{N}\right)$ and $\gamma = 2 \vee q$. 
\end{theorem}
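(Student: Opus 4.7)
The plan is to follow the architecture of the interpolation proof of Theorem \ref{thm:Misspecified_Sobolev_Interpolation} but to carry two additional stochastic ingredients: a bound, in expectation, on the Sobolev norm of the regression predictor itself, and a bound on the residual $u = f - R^{m}_{f,\sigma^{2},\varepsilon}(\theta_n)$ evaluated at the design points. Since Assumption \ref{assumption:smoothness} makes $\{\tau(\theta_n)\}_{n\geq N}$ finite, it suffices to prove the bound for each value $\tau_k$ in this set and then absorb the finite supremum into the constant.

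With $\tau := \tau^{-}_{k}\wedge \tau_{f}$, the first step is to invoke an Arcangeli-type sampling inequality as in \citet{Arcangeli2011}, which in its $L^q$ form delivers $\bignorm{u}_{W^{s}_{q}(\X)} \leq C\Lambda_{s,q} h_{X_n}^{d/\gamma -s}\left(h_{X_n}^{\tau - d/2} \bignorm{u}_{W^{\tau}_{2}(\X)} + \bignorm{u|_{X_n}}_{\ell^{\gamma}(X_n)}\right)$ for $h_{X_n}\leq h_0$ and $\gamma = 2 \vee q$; this produces the $h_{X_n}^{d/\gamma - s}$ prefactor shared by every term of the theorem. Splitting via the triangle inequality gives $\bignorm{u}_{W^{\tau}_{2}(\X)} \leq \bignorm{f}_{W^{\tau_{f}}_{2}(\X)} + \bignorm{R^{m}_{f,\sigma^{2},\varepsilon}(\theta_n)}_{W^{\tau}_{2}(\X)}$, and the first summand immediately contributes the deterministic piece $\bignorm{f}_{W^{\tau_f}_2(\X)} + \sup_n \bignorm{m(\theta_n)}_{W^{\tau_f}_2(\X)}$ of the theorem's bound.

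The key step is controlling $\bbE\bignorm{R^{m}_{f,\sigma^{2},\varepsilon}(\theta_n)}_{W^{\tau}_{2}(\X)}$. The norm equivalence from Assumption \ref{assumption:parameters} reduces this to an expected RKHS-norm bound on the centred regression predictor. Two complementary estimates are combined: first, the regularised least-squares characterisation of $R^{m}_{f,\sigma^{2},\varepsilon}(\theta_n)$ yields the minimisation inequality $\bignorm{R^{m}_{f,\sigma^{2},\varepsilon}(\theta_n) - m(\theta_n)}_{\calH_{k(\theta_n)}}^{2} \leq (n\sigma^{2})^{-1}\bignorm{\varepsilon}_{2}^{2} + \bignorm{\tilde f - m(\theta_n)}_{\calH_{k(\theta_n)}}^{2}$ for any comparison $\tilde f \in \calH_{k(\theta_n)}$, and together with $\bbE\bignorm{\varepsilon}_{2} \leq \sqrt{n}\sigma$ this produces the $n^{1/2} h_{X_n}^{\tau^{-}_{k} - d/2}$ term via the norm part of the sampling inequality; second, Lemma 4 of \citet{VanderVaart2011}, whose hypotheses are matched exactly by Assumption \ref{ass:small_ball} on small-ball probabilities of $\Pi_{k(\theta_n)}$ and Assumption \ref{ass:target_extension} on the H\"older extension of $f$, bounds the same RKHS norm by a quantity of order $n^{d/(4\tau^{-}_{k})}$, producing the remaining noise term.

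The discrete residual $\bignorm{u|_{X_n}}_{\ell^{\gamma}(X_n)}$ is handled in expectation via the closed-form identity $u(x_i) = -\varepsilon_i + \sigma^{2}\bigl[(k_{XX}+\sigma^{2}I)^{-1}(y-m_X)\bigr]_i$, controlling the $\varepsilon$-summand by $\bbE\bignorm{\varepsilon}_{\ell^\gamma(X_n)} \leq C n^{1/\gamma}$ and the other summand by the RKHS bounds of the previous step; both contributions are absorbed into the same two noise terms already identified. Finally, in the misspecified regime $\tau_f < \tau^{+}_{k}$, the direct sampling-inequality application is replaced by the band-limited approximation technique of \citet{Narcowich2006} used in the proof of Theorem \ref{thm:Misspecified_Sobolev_Interpolation}: one approximates $f$ on $X_n$ by a function of RKHS-smoothness $\tau^{+}_{k}$ whose higher-order norm grows like $\rho_{X_n}^{\tau^{+}_{k}-\tau_f}$, which yields the $\rho_{X_n}^{(\tau^{+}_{k}-\tau_f)_+}$ factor on the deterministic summand and the modified exponent $(1/2 - \tau_f/(2\tau^{+}_{k}))_+ \vee d/(4\tau^{-}_{k})$ on $n$ inherited from re-optimising the \citet{VanderVaart2011} bound under smoothness mismatch. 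The principal obstacle is this expectation bound on the RKHS norm of the posterior mean, which is delicate under the combination of noisy data and potential smoothness misspecification and is precisely what forces Assumptions \ref{ass:small_ball} and \ref{ass:target_extension} into the hypotheses.
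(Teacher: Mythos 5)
Your high-level architecture (sampling inequality giving the $h_{X_n}^{d/\gamma-s}$ prefactor, triangle-inequality split, the regularised least-squares minimisation inequality, band-limited functions for the misspecified regime, and the role of Assumptions \ref{ass:small_ball} and \ref{ass:target_extension}) matches the paper, which routes the argument through Theorem \ref{thm:Misspecified_Sobolev_corruption} and then specialises to Gaussian noise. However, there is a genuine gap in how you produce the two pure-noise terms $n^{1/2}h_{X_n}^{\tau_k^--d/2}$ aside, namely the $n^{d/(4\tau_k^-)}$ and $n^{(1/2-\tau_f/(2\tau_k^+))_+}$ exponents: you attribute them to a bound on $\bbE\bignorm{R^{m}_{f,\sigma^2,\varepsilon}(\theta_n)}_{W^{\tau}_2(\X)}$ obtained from ``Lemma 4 of \citet{VanderVaart2011}''. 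That lemma does not bound the RKHS norm of the posterior mean; it bounds the \emph{decentering function} $\inf\{\tfrac12\norm{h}^2_{\calH_{k(\theta_n)}}:\norm{h-f}_{L^\infty}<\varepsilon\}$, and is only one of two ingredients (the other being the Linde--Kuelbs small-ball/metric-entropy bound, which is where Assumption \ref{ass:small_ball} enters) in the concentration function $\phi_{\theta_n,f}$. The object these terms actually control in the paper is the \emph{expected discrete residual} $\bbE\bignorm{(f-R_{f,\sigma^2,\varepsilon}(\theta_n))_{X_n}}_2$, via the posterior contraction result \citet[Theorem 1]{VanderVaart2011} (Theorem \ref{thm:Sobolev_Contraction} in the paper), Jensen's inequality, and the generalised inverse $\psi^{-1}_{\theta_n,f}(n)$; this yields Proposition \ref{prop:residual_contraction} and hence the $n^{(1/2-\tau_f/(2\tau_k^+))_+\vee d/(4\tau_k^-)}$ term multiplying only $h_{X_n}^{d/\gamma-s}$ (note the absence of any further power of $h_{X_n}$ on that term in the statement, which is consistent with it coming from the residual part of the sampling inequality, not the Sobolev-norm part).

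This matters because your proposed substitute for that step does not close. Handling the discrete residual by the identity $u(x_i)=-\varepsilon_i+\sigma^2[(k_{XX}+\sigma^2 I)^{-1}(y-m_X)]_i$ and then falling back on ``the RKHS bounds of the previous step'' only delivers what Lemma \ref{lem:Regression_Best_Approximation} already gives, namely $\bignorm{(f-R_{f,\sigma^2,\varepsilon})_{X_n}}_2\leq\norm{\varepsilon}_2+(\norm{\varepsilon}_2^2+\sigma^2\norm{f}^2_{\calH_k})^{1/2}=O(\sqrt{n})$ in expectation, which is strictly weaker than $n^{d/(4\tau_k^-)}$ whenever $\tau_k^->d/2$ and would not recover the minimax rate of Corollary \ref{cor:gaussian_noise_cor}. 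The improvement from $\sqrt{n}$ to $n^{(1/2-\tau_f/(2\tau_k^+))_+\vee d/(4\tau_k^-)}$ is exactly the point of importing the Bayesian nonparametrics machinery, and it must be applied to the residuals at the design points rather than to the RKHS norm of the predictor (for which no bound of order $n^{d/(4\tau_k^-)}$ is available). To repair the proof, replace your ``key step'' by: (i) invoke Theorem \ref{thm:Sobolev_Contraction} to get $\bbE\bignorm{(f-R)_{X_n}}_2\leq C\sqrt{n}\,\psi^{-1}_{\theta_n,f}(n)$; (ii) bound the concentration function by combining the decentering estimate (VdV Lemma 4, using Assumption \ref{ass:target_extension}) with the small-ball estimate (using Assumption \ref{ass:small_ball} and the metric entropy of the RKHS unit ball), taking care that the constants are uniform over the adaptively chosen $\theta_n$; (iii) invert to obtain Corollary \ref{cor:gen_inv_bound}. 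The remaining pieces of your outline then assemble as you describe.
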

As far as we are aware this is the first combination of SDA and Bayesian nonparametrics techniques. The closest result that we know of is by \citet[Theorem 8.1]{Arcangeli2007} but does not cover the present scenario due to that result having requirements on the noise not satisfied by Gaussian noise. 

The bounds contain a sum of three terms. The first term gives a rate identical to the interpolation case, and the later two describe the impact of the Gaussian noise. These last two terms will usually decrease to zero at a slower rate in $n$, and again, one can notice a clear advantage of using quasi-uniform points. The dependence on the norms of $f$ and $m$ in $C_{0}$ arises from the use of the result by \citet[Lemma 4]{VanderVaart2011}. This dependence is made explicit in the proof and occurs in a small-ball probability bound. Due to Assumption \ref{assumption:double_SI_application}, there is a limitation in our theory for $d=1$. Specifically, $\tau_{f} + d/2$ could be smaller than $\lceil\tau_{f}\rceil$ when $d = 1$ so Assumption 4 might not be satisfied. But in two dimensions and higher, Assumption \ref{assumption:double_SI_application} does not impose extra restrictions since then $\tau_{f} + d/2 \geq \lceil\tau_{f}\rceil$ so $\tau(\theta_{n}) = \tau_{f} + d/2$ is a permissible value. 

We once again comment on the impact of model choice. The advice in terms of experimental design is once again to use quasi-uniform points. The main difference with the previous theorem is for the smoothness parameters of the kernel.

\begin{itemize}

\item \emph{Kernel smoothness:} The equality $\tau_{f} + d/2 = \tau^{+}_{k} = \tau^{-}_{k}$ optimises the bound when using quasi-uniform points. This corresponds to the sample paths of the GP matching the smoothness of the target function \citep{Kanagawa2018Review,Lukic2001}. This is a phenomenon that occurs in this setting due to the true and assumed likelihood both being Gaussian, which is in distinct contrast to the interpolation case where the bound is optimised when $\tau_f = \tau_k^+ = \tau_k^-$. This choice of smoothness parameter might seem unintuitive from the perspective of kernel ridge regression. However this can be rationalised by observing that the connection to kernel ridge regression can only be made if the regularisation being kept constant and not altering with added data. An in depth discussion is provided by \citet[Section 5.1]{Kanagawa2018Review}.

\end{itemize}

\begin{corollary}\label{cor:gaussian_noise_cor}
	Fix $N\in\bbN$ and suppose Assumptions 1-\ref{ass:target_extension} hold with $\tau_{k}^{+} = \tau_{k}^{-} = \tau_{f} + d/2$. Let $q\in[1,2]$ and $s \in [0,\tau_{f}^{*}]$. Then, $\exists  C,h_{0} > 0$ such that $\forall n\geq N$, $\forall X_{n}\subseteq\X$ quasi-uniform with $h_{X_{n}}\leq h_{0}$
 	\begin{align*}
 		\mathbb{E}\left[ \bignorm{f-R^{m}_{f,\sigma^{2},\varepsilon}(\theta_{n})}_{W^{s}_{q}(\X)}\right] \leq Cn^{-\frac{\tau_{f}}{2\tau_{f} + d} + \frac{s}{d}},
 	\end{align*}
 	where $C= C_{0}\Lambda_{s,q}$ with $C_{0} = C_{0}\left(\X,d,\tau_{f},q,\norm{f}_{W^{\tau_{f}}_{2}(\X)},\sup_{n\geq N}\norm{m(\theta_{n})}_{W^{\tau_{f}}_{2}(\X)},\Theta_{N}^{*}\right)$ and $h_{0} = h_{0}\left(R,\delta,d,\tau_{f},\Theta^{*}_{N}\right)$.
\end{corollary}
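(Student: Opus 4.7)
The plan is to derive Corollary \ref{cor:gaussian_noise_cor} as a direct specialisation of the misspecified-smoothness branch of Theorem \ref{thm:Misspecified_Sobolev_Random_Regression}. Under the hypothesis $\tau_{k}^{+} = \tau_{k}^{-} = \tau_{f} + d/2$ we have $\tau_{k}^{+} > \tau_{f}$, so the bound for $\tau_{f} < \tau_{k}^{+}$ is the one that applies; the range condition $s \in [0,\tau_{f}^{*}]$ in the corollary matches the theorem's $[0,(\tau_{f}\wedge\tau_{k}^{-})^{*}]$ because $\tau_{f} \wedge \tau_{k}^{-} = \tau_{f}$, and the hypothesis $q \in [1,2]$ forces $\gamma = 2 \vee q = 2$, so that the overall prefactor reads $h_{X_{n}}^{d/2 - s}$.

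Next I would substitute these parameter choices into each of the three bracketed terms, using $(\tau_{f}\wedge\tau_{k}^{-}) - d/2 = \tau_{f} - d/2$, $(\tau_{k}^{+} - \tau_{f})_{+} = d/2$, and $\tau_{k}^{-} - d/2 = \tau_{f}$. A short algebraic simplification shows that $(1/2 - \tau_{f}/(2\tau_{k}^{+}))_{+}$ and $d/(4\tau_{k}^{-})$ both equal $d/(2(2\tau_{f}+d))$, so the maximum appearing in the third exponent collapses to this single value. Invoking quasi-uniformity of $\{X_{n}\}$ then allows replacement of $\rho_{X_{n}}$ by a constant (absorbed into $C_{0}$) and substitution of $h_{X_{n}} \asymp n^{-1/d}$ via the fill-distance estimate on $\mathcal{L}(R,\delta)$-domains recalled in Section \ref{sec:point_selection}.

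Once everything is expressed as a power of $n$, the three contributions become, respectively, $n^{-\tau_{f}/d + s/d}$, $n^{-\tau_{f}/d + s/d}$, and $n^{-\tau_{f}/(2\tau_{f}+d) + s/d}$. Since $2\tau_{f} + d > d$ implies $\tau_{f}/(2\tau_{f}+d) < \tau_{f}/d$, the third rate is strictly slower and therefore dominates the bound, yielding exactly the exponent claimed in the corollary. No real obstacle is anticipated: Theorem \ref{thm:Misspecified_Sobolev_Random_Regression} does all the analytic work, and the corollary is pure exponent book-keeping. The qualitative content is simply that under the prescribed tuning $\tau_{k}^{\pm} = \tau_{f} + d/2$ with quasi-uniform design, the noise-driven term $n^{d/(4\tau_{k}^{-})}$ combined with the prefactor $h_{X_{n}}^{d/2 - s}$ becomes the bottleneck, reproducing the classical nonparametric regression rate $n^{-\tau_{f}/(2\tau_{f}+d)}$ for the $L^{2}$ case $s=0$.
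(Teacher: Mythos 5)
Your derivation is correct and is exactly the intended one: the paper gives no separate proof of Corollary \ref{cor:gaussian_noise_cor}, treating it as a direct specialisation of the $\tau_{f}<\tau_{k}^{+}$ branch of Theorem \ref{thm:Misspecified_Sobolev_Random_Regression}, and your exponent bookkeeping (including the observation that $(1/2-\tau_{f}/2\tau_{k}^{+})_{+}$ and $d/4\tau_{k}^{-}$ coincide at $d/(2(2\tau_{f}+d))$ and that the third term dominates) checks out. Your use of $h_{X_{n}}\asymp n^{-1/d}$ rather than a one-sided bound is also the right call, since the prefactor exponent $d/2-s$ can be negative.
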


When $q=2,s=0$ this is the mini-max optimal rate for non-parametric regression within the Bayesian nonparametric paradigm, see e.g.\ \citep[Chapter 2]{Tsybakov2009} and references therein. A comparison can be made to a recent result in statistical learning theory \cite[Corollary 6]{Fischer2017} which has $s/(2\tau+d)$ instead of $s/d$ meaning it is stronger than our result. However, as discussed in Section \ref{sec:assumptions}, the assumptions in the statistical learning paradigm is somewhat different to our setting as we do not consider a norm weighted by the point sampling distribution.

\subsection{Convergence Guarantees with Misspecified Likelihoods} \label{sec:fixednoise_convergence}

Now that we have presented our results for well-specified likelihoods, we extend these to the misspecified case. We recall that GP approximations based on interpolation or Gaussian likelihoods are often used due to their closed form expressions, but that these are often idealisations of the problem. 

This section illustrates the impact of this idealisation on convergence. In each case, the bound allows for arbitrary corruption $y_i = f(x_i) + \varepsilon_i$ where $\{\varepsilon_i\}_{i=1}^n$ do not have to be i.i.d.\ nor Gaussian, and could even be deterministic. The corruption is manifested in the bounds only in a $\bbE[\norm{\varepsilon}_{2}]$ term. The main point of this section is that quasi-uniform points are not only essential for smoothness misspecification, but can also be of significant help to counter likelihood misspecification. Due to the lack of assumptions on the type of corruption, our bounds should be interpreted as worst-case type bounds. 

They are particularly suitable for misspecification models studied in the robust regression literature \citep{Rousseeuw1987,Huber2009,Christmann2007} in particular \citet{Christmann2007} studies the case of kernel ridge regression. For example, bias robustness corresponds to the setting where the i.i.d Gaussian assumption is satisfied up to a small number of corruptions, usually called outliers. This is common when the data are collected from physical or medical sensors as these tend to have faults after a certain period of time, see e.g.\ \citep{Armstrong1988}. It also occurs in many applications of Bayesian optimisation \citep{Martinez-Cantin2018}.  If a fixed number of data points is contaminated by outliers, then $\|\varepsilon\|_2 = O(1)$. Alternatively, it could be that some proportion of the total number of data points is corrupted. For example, if $n^{\alpha},\alpha \in (0,1)$ data points are corrupted, then $\|\varepsilon\|_2 = O(n^{\alpha/2})$, whereas if $\beta n, \beta \in (0,1]$ data points are corrupted, then $\|\varepsilon\|_2 = O(\sqrt{ \beta n})$.  

Another possible scenario is the (pessimistic) case of arbitrary random and unbounded noise, see e.g. \citep{Stegle2008}. In this case, assuming that $\mathbb{E}[\varepsilon_i] < \infty $ and $\mathbb{E}[\varepsilon_i^2] < \infty$ $\forall i$, we get that: $\mathbb{E}[\|\varepsilon\|_2] = O(\sqrt{n})$ regardless of the distribution of these corruptions or of any correlation. We note that the exact distribution of these norms have been derived for a range of distributions \citep{Mathai1992}. Of course, it should be possible to improve on these worst-case bounds by using stronger assumptions on the distributions of the noise terms. For example, the Laplace to Gaussian misspecification was previously studied by \cite{Kleijn2006}. It would be interesting, but beyond the scope of this paper, to combine such results with scattered data approximation bounds to produce bounds in the same fashion as Theorem \ref{thm:Misspecified_Sobolev_Random_Regression}. 

Finally, one setting where our bounds will not be of help is when the noise distribution has infinite first or second moment. In this case, $\mathbb{E}[\|\varepsilon\|_2] =\infty$ and the bounds will be vacuous. This will be the case for Cauchy noise, or for certain instances of student-t or Pareto noise.

\subsubsection{Misspecified Gaussian Regression Likelihood}

For the first result, the Gaussian likelihood $\mathcal{N}(0,\sigma_{n}^{2})$ is implicitly assumed. This corresponds to considering $R_{f,\sigma^2_n,\varepsilon}^{m}$  with $\sigma_{n} > 0$ as the approximating function. The subscript in $\sigma_{n}$ is kept since we might want to vary the parameter with $n$ in order to improve the convergence rate. This is to be interpreted as a worst-case type result since no assumption is placed upon the corruption. 

\begin{theorem}\label{thm:Misspecified_Sobolev_Det_Regression}
Fix $N\in\bbN$ and suppose Assumptions 1-\ref{assumption:double_SI_application} hold. Let $q\in[1,\infty]$, $s\in[0,(\tau_{f}\wedge\tau^{-}_{k})^{*}]$ and $\sigma_{n} > 0\: \forall n\in\bbN$. Then, $\exists  C,h_{0} > 0$ such that $\forall n\geq N$,  $\forall X_{n}\subseteq\X$ with $h_{X_{n}}\leq h_{0}$, when $\tau_{f}\geq\tau_{k}^{+}$
\begin{align*}
	\mathbb{E}\left[ \bignorm{f -R_{f,\sigma^2_n,\varepsilon}^{m}(\theta_{n})}_{W^{s}_{q}(\X)} \right] \leq Ch_{X_{n}}^{\frac{d}{\gamma}-s}
		\bigg[\left(h_{X_{n}}^{\tau^{-}_{k}-\frac{d}{2}}+ \sigma_n \right)& \left(\norm{f}_{W^{\tau_{f}}_{2}(\X)} + \sup_{n\geq N}\bignorm{m(\theta_{n})}_{W^{\tau_{f}}_{2}(\X)}\right) \\
		& + \left(h_{X_{n}}^{\tau^{-}_{k}-\frac{d}{2}}\sigma_{n}^{-1}+ 1\right)\mathbb{E}[\norm{\varepsilon}_{2}] \bigg],
\end{align*}
and when $\tau_{f} < \tau_{k}^{+}$
	\begin{align*}
		&\mathbb{E}\left[ \bignorm{f -R_{f,\sigma^2_n,\varepsilon}^{m}(\theta_{n})}_{W^{s}_{q}(\X)} \right] \\
		&  \leq Ch_{X_{n}}^{\frac{d}{\gamma}-s}
		\bigg[\left(h_{X_{n}}^{\left(\tau_{f}\wedge\tau^{-}_{k}\right)-\frac{d}{2}}\rho_{X_{n}}^{(\tau^{+}_{k} - \tau_{f})}+ \sigma_n q_{X_n}^{-(\tau_k^{+} - \tau_{f})}\right) \left(\norm{f}_{W^{\tau_{f}}_{2}(\X)} + \sup_{n\geq N}\bignorm{m(\theta_{n})}_{W^{\tau_{f}}_{2}(\X)}\right)\\
		& \qquad  \qquad \qquad  + \left(h_{X_{n}}^{\tau^{-}_{k} -\frac{d}{2}}\sigma^{-1}_n + 1\right)\mathbb{E}[\norm{\varepsilon}_{2}] \bigg],
	\end{align*}
	where $C = C_{0}\Lambda_{s,q}$ with $C_{0} =  C_{0}\left(\X,d,q,\tau_{f},\Theta_{N}^{*}\right)$, $h_{0} = h_{0}\left(R,\delta,d,\tau_{f},\Theta_{N}^{*}\right)$ and $\gamma = 2\vee q$.
\end{theorem}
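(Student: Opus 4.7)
The plan is to adapt the strategy of Theorem \ref{thm:Misspecified_Sobolev_Interpolation} and Theorem \ref{thm:Misspecified_Sobolev_Random_Regression}: combine an Arcangeli--L\'opez de Silanes--Torrens type sampling inequality with the variational optimality satisfied by $R^m_{f,\sigma_n^2,\varepsilon}(\theta_n)$, but without any distributional assumption on $\varepsilon$, so that every inequality is deterministic in $\varepsilon$ until the last step where $\bbE[\norm{\varepsilon}_2]$ appears. Writing $g_n \coloneqq R^m_{f,\sigma_n^2,\varepsilon}(\theta_n)$ and applying the sampling inequality to $f - g_n$ with internal Sobolev smoothness $\tau = \tau_f \wedge \tau^-_k$ yields
\begin{align*}
    \norm{f - g_n}_{W^s_q(\X)} \leq C\Lambda_{s,q}\bigg[h_{X_n}^{\tau - s - d\left(\frac{1}{2}-\frac{1}{q}\right)_+}\norm{f - g_n}_{W^\tau_2(\X)} + h_{X_n}^{\frac{d}{\gamma} - s}\norm{(f - g_n)_{X_n}}_{\ell_2}\bigg],
\end{align*}
which explains the common prefactor $h_{X_n}^{d/\gamma - s}$ in the statement.

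The two right-hand side quantities are controlled through the optimality property of kernel ridge regression: since $g_n - m(\theta_n)$ minimises $\tfrac{1}{n}\sum_i(g(x_i)+m(x_i)-y_i)^2 + \tfrac{\sigma_n^2}{n}\norm{g}_{\calH_{k(\theta_n)}(\X)}^2$ over $g \in \calH_{k(\theta_n)}(\X)$, testing against any $h \in \calH_{k(\theta_n)}(\X)$ and using $\sqrt{a^2+b^2}\leq a+b$ gives
\begin{align*}
    \norm{(y - g_n)_{X_n}}_2 \leq \norm{(y - m - h)_{X_n}}_2 + \sigma_n\norm{h}_{\calH_{k(\theta_n)}}, \; \norm{g_n - m}_{\calH_{k(\theta_n)}} \leq \sigma_n^{-1}\norm{(y - m - h)_{X_n}}_2 + \norm{h}_{\calH_{k(\theta_n)}}.
\end{align*}
In the well-specified regime $\tau_f \geq \tau^+_k$, the norm equivalence $W^{\tau_f}_2(\X) \hookrightarrow W^{\tau^+_k}_2(\X) \asymp \calH_{k(\theta_n)}(\X)$ (Assumption \ref{assumption:parameters}) makes $h = f - m$ admissible; substituting, and combining $\norm{(f - g_n)_{X_n}}_2 \leq \norm{(y - g_n)_{X_n}}_2 + \norm{\varepsilon}_2$ with the Sobolev-norm bound from Step 1, produces the first stated inequality.

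In the misspecified regime $\tau_f < \tau^+_k$ the choice $h = f - m$ is no longer legal, so a Narcowich--Ward--Wendland escape argument (of the kind used by \citet{Narcowich2006} and \citet{Teckentrup2019}) is used to build a proxy $f_\rho \in \calH_{k(\theta_n)}(\X)$ which interpolates $f$ on $X_n$ and satisfies $\norm{f_\rho}_{\calH_{k(\theta_n)}} \leq C q_{X_n}^{-(\tau^+_k - \tau_f)}\norm{f}_{W^{\tau_f}_2}$. Choosing $h = f_\rho - m$ leaves the data residual unchanged (as $f_\rho$ agrees with $f$ on $X_n$) and accounts for the $\sigma_n q_{X_n}^{-(\tau^+_k - \tau_f)}(\norm{f}_{W^{\tau_f}_2} + \norm{m}_{W^{\tau_f}_2})$ factor. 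To estimate the Sobolev-norm term from Step 1 one splits $\norm{f - g_n}_{W^\tau_2} \leq \norm{f - f_\rho}_{W^\tau_2} + \norm{f_\rho - g_n}_{W^\tau_2}$; the second piece is handled by norm equivalence together with the bound on $\norm{f_\rho - g_n}_{\calH_{k(\theta_n)}}$ just obtained, while the first piece requires a \emph{second} sampling inequality applied to $f - f_\rho$, which vanishes on $X_n$ and therefore contributes only a Sobolev-norm term, producing the $\rho_{X_n}^{\tau^+_k - \tau_f}$ factor.

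This double use of the sampling inequality is precisely what forces Assumption \ref{assumption:double_SI_application}, since the two invocations must be applied with compatible smoothness levels so their stability constants are admissible. I expect the main obstacle to be the careful bookkeeping of exponents in $h_{X_n}$, $q_{X_n}$, $\rho_{X_n}$ and $\sigma_n$ across the two smoothness regimes, together with verifying that the constants $C_0$ and $h_0$ depend only on the advertised quantities via the uniform bounds encoded in $\Theta^*_N$. The closing step takes expectation; since every bound depends on $\varepsilon$ only linearly through $\norm{\varepsilon}_2$, this simply replaces $\norm{\varepsilon}_2$ by $\bbE[\norm{\varepsilon}_2]$.
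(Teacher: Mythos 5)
Your overall architecture (sampling inequality plus the variational optimality of the ridge regressor tested against a smooth surrogate, with a band-limited proxy handling $\tau_f<\tau_k^+$, and expectation taken only at the end through the linear dependence on $\norm{\varepsilon}_2$) is the right one. The well-specified branch and the residual bound $\norm{(f-g_n)_{X_n}}_2\leq 2\norm{\varepsilon}_2+C\sigma_n q_{X_n}^{-(\tau_k^+-\tau_f)_+}(\cdots)$ reproduce what the paper obtains via Lemma \ref{lem:Regression_Best_Approximation} and Lemma \ref{lemma:residuals}; your general test function $h$ is the natural common generalisation of the paper's two special cases $h=f$ and $h=f_\alpha$. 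However, the misspecified branch has a genuine gap: the order in which you apply the sampling inequality and the decomposition loses a factor of $h_{X_n}^{\tau(\theta_n)-\tau_f}$. You apply Theorem \ref{thm:Arcangeli_SI} once to $f-g_n$ at smoothness $\tau=\tau_f\wedge\tau_k^-$ and only afterwards split the resulting $W^{\tau}_2$-norm through $f_\rho$. The piece $\norm{f_\rho-g_n}_{W^{\tau}_2(\X)}\leq C\norm{f_\rho-g_n}_{\calH_{k(\theta_n)}(\X)}\leq C(q_{X_n}^{-(\tau(\theta_n)-\tau_f)}\norm{f}_{W^{\tau_f}_2(\X)}+\sigma_n^{-1}\norm{\varepsilon}_2)$ is then multiplied only by $h_{X_n}^{\tau-s-d(1/2-1/q)_+}=h_{X_n}^{d/\gamma-s}h_{X_n}^{(\tau_f\wedge\tau_k^-)-d/2}$, which yields $h_{X_n}^{(\tau_f\wedge\tau_k^-)-(\tau(\theta_n)-\tau_f)-d/2}\rho_{X_n}^{\tau(\theta_n)-\tau_f}$ on the $\norm{f}$ term --- weaker than the claimed $h_{X_n}^{(\tau_f\wedge\tau_k^-)-d/2}\rho_{X_n}^{\tau_k^+-\tau_f}$ by exactly $h_{X_n}^{-(\tau_k^+-\tau_f)}$ --- and gives the $\sigma_n^{-1}\bbE[\norm{\varepsilon}_2]$ term the coefficient $h_{X_n}^{\tau_f-d/2}$ rather than $h_{X_n}^{\tau_k^--d/2}$ when $\tau_k^->\tau_f$. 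Relatedly, your attribution of the $\rho_{X_n}^{\tau_k^+-\tau_f}$ factor to the piece $f-f_\rho$ is not correct: that piece vanishes on $X_n$ and the sampling inequality awards it only a power of $h_{X_n}$, no $\rho_{X_n}$; the mesh-ratio factor can only arise from pairing $q_{X_n}^{-(\tau(\theta_n)-\tau_f)}$ with an $h_{X_n}^{\tau(\theta_n)-\tau_f}$ gained elsewhere.

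The fix, which is what the paper does in the proof of Theorem \ref{thm:Misspecified_Sobolev_corruption}, is to perform the decomposition \emph{before} invoking the sampling inequality: write $f-g_n=(f-R_f(\theta_n))+(R_f(\theta_n)-g_n)$ (or with $f_\rho$ in place of $R_f(\theta_n)$), bound the first piece by Theorem \ref{thm:Misspecified_Sobolev_Interpolation}, and apply the sampling inequality to the second piece at the \emph{full} smoothness $\tau(\theta_n)$, which is legitimate because both functions lie in $\calH_{k(\theta_n)}(\X)$. The resulting prefactor $h_{X_n}^{\tau(\theta_n)-s-d(1/2-1/q)_+}$ supplies precisely the $h_{X_n}^{\tau(\theta_n)-\tau_f}$ needed to convert $q_{X_n}^{-(\tau(\theta_n)-\tau_f)}$ into $\rho_{X_n}^{\tau(\theta_n)-\tau_f}$; this double invocation at levels $\tau_f$ and $\tau(\theta_n)$ is also the actual reason Assumption \ref{assumption:double_SI_application} is needed. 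A second, minor, point: Assumption \ref{assumption:mean} only gives $m(\theta_n)\in W^{\tau_f}_2(\X)$, so when $\tau(\theta_n)>\tau_f$ your test function $h=f_\rho-m$ need not lie in $\calH_{k(\theta_n)}(\X)$; you must either build the band-limited proxy for $f-m$ jointly or, as the paper does, peel the mean off into separate zero-mean problems at the outset.
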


This generalizes the results by \citet[Proposition 3.6]{WendlandRieger2005} and \citet[Theorem 7.1]{Arcangeli2007} to misspecified likelihood and smoothness. Assumptions \ref{ass:small_ball} and \ref{ass:target_extension}, used in Theorem \ref{thm:Misspecified_Sobolev_Random_Regression}, are not required since the corruption is not assumed Gaussian. The effect of the corruption is manifested solely in the $\mathbb{E}[\norm{\varepsilon}_{2}]$ term and to conclude the right hand side converges to zero, the growth of $\mathbb{E}[\norm{\varepsilon}_{2}]$ needs to be sufficiently bounded. The theorem leads us to a useful recommendation for $\sigma_n$ in settings where the data is noiseless. 

\begin{itemize}

\item \emph{Adaptive Likelihood/Nugget:} As noted in Section \ref{sec:assumptions}, it is common to add a ``nugget'' term to kernel matrices in order to improve numerical stability. This corresponds to taking $\sigma_n > 0$, and larger values of $\sigma_n$ are known to provide greater stability at the cost of a slower convergence rate. When there is no corruption this is referred to as approximate interpolation \citep{WendlandRieger2005}. Theorem \ref{thm:Misspecified_Sobolev_Det_Regression} provides a way of choosing $\sigma_{n}$  for this scenario. Setting $q = \infty$ and $\tau_k^-=\tau_k^+=\tau_f$, meaning we are in the well specified smoothness case, the choice $\sigma_n \propto h_{X_n}^{(\tau_f \wedge \tau_k^-)+(\tau_k^+-\tau_f) - d/2} = h_{X_n}^{\tau_f - d/2}$ optimises the bound. This coincides with the choice proven to be optimal for matrix conditioning by \citet[Corollary 3.7]{WendlandRieger2005}.

\end{itemize}
Thinking of this suggestion from the point of view of adaptive likelihood may seem unnatural at first since the likelihood is normally a fixed object which is independent of data. However, this suggestion can also be viewed from a regularisation perspective as altering the penalisation in the optimisation problem $S$, see Section \ref{sec:background}.

We now give two corollaries of Theorem \ref{thm:Misspecified_Sobolev_Det_Regression} under different assumptions on $\tau_k^-$ and $\tau_k^+$. In each case, these provide insights into model choices which optimise the bounds. First, consider $\tau_k^- =\tau_k^+ = \tau_f$, in which case the smoothness is correctly specified. The next result gives a bound when $h_{X_n}$ has the optimal rate $n^{-1/d}$ and $\sigma_{n}$ is kept constant.

\begin{corollary}\label{cor:deterministic_corrupt}
Fix $N\in\bbN$ and suppose Assumptions 1-\ref{assumption:double_SI_application} hold. Let $q\in[1,\infty]$, $s\in[0,\tau_{f}^{*}]$, $\tau_{k}^{-} = \tau_{k}^{+}=\tau_{f} $ and $\sigma_{n} = \sigma$. Assume $h_{X_{n}}\leq C_{1}n^{-\frac{1}{d}}$ for some $C_{1} > 0$. Then, $\exists C, h_{0} > 0$ such that $\forall n\geq N$ with $h_{X_{n}}\leq h_{0}$
\begin{align*}
&\mathbb{E}\left[\bignorm{f -R_{f,\sigma^2,\varepsilon}^{m}(\theta_{n})}_{W^{s}_{q}(\X)}\right] \\
		&\qquad\leq Cn^{-\frac{1}{\gamma} +\frac{s}{d}}\left(\bbE[\norm{\varepsilon}_{2}] + n^{-\frac{\tau_{f}}{d}+\frac{1}{2}}\left(\norm{f}_{W^{\tau_{f}}_{2}(\X)} + \sup_{n\geq N}\bignorm{m(\theta_{n})}_{W^{\tau_{f}}_{2}(\X)}\right)\right),
\end{align*}
where $C = C_{0}\Lambda_{s,q}$ with $C_{0} =  C_{0}\left(\X,d,q,\tau_{f},\Theta_{N}^{*}\right)$, $h_{0} = h_{0}\left(R,\delta,d,\tau_{f},\Theta_{N}^{*}\right)$ and $\gamma = 2\vee q$.
\end{corollary}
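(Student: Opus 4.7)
The plan is to apply Theorem \ref{thm:Misspecified_Sobolev_Det_Regression} directly under the hypotheses of the corollary and then convert the $h_{X_n}$-dependent bound into an $n$-dependent one using the fill-distance assumption $h_{X_n} \leq C_1 n^{-1/d}$. First I would observe that since $\tau^{-}_{k} = \tau^{+}_{k} = \tau_f$, the well-specified branch $\tau_f \geq \tau^{+}_{k}$ of Theorem \ref{thm:Misspecified_Sobolev_Det_Regression} applies (with equality). Substituting $\sigma_n = \sigma$ and $\tau^{-}_{k} = \tau_f$ then gives
\[
\mathbb{E}\bigl[\bignorm{f - R_{f,\sigma^2,\varepsilon}^{m}(\theta_n)}_{W^{s}_{q}(\X)}\bigr] \leq C_2\, h_{X_n}^{\frac{d}{\gamma} - s}\bigl[\bigl(h_{X_n}^{\tau_f - \frac{d}{2}} + \sigma\bigr)\mathcal{N} + \bigl(h_{X_n}^{\tau_f - \frac{d}{2}}\sigma^{-1} + 1\bigr)\mathbb{E}[\norm{\varepsilon}_2]\bigr],
\]
where I have written $\mathcal{N} := \norm{f}_{W^{\tau_f}_2(\X)} + \sup_{n \geq N}\bignorm{m(\theta_n)}_{W^{\tau_f}_2(\X)}$ for brevity, and $C_2$ carries the dependencies stated in the theorem.

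The second step is to plug $h_{X_n} \leq C_1 n^{-1/d}$ into each power of $h_{X_n}$ appearing above. Since $\tau_f > d/2$ by Assumption \ref{assumption:mean}, the exponent $\tau_f - d/2$ is strictly positive and $h_{X_n}^{\tau_f - d/2} \leq C_1^{\tau_f - d/2}\, n^{-\tau_f/d + 1/2}$, a quantity which tends to $0$ for $n \geq N$. Likewise $h_{X_n}^{d/\gamma - s} \leq C_1^{d/\gamma - s}\, n^{-1/\gamma + s/d}$ in the range where $d/\gamma - s \geq 0$; when $d/\gamma - s < 0$ the matching estimate requires the complementary lower bound $h_{X_n} \gtrsim n^{-1/d}$, which is the quasi-uniform scaling implicit here. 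Since $\sigma$ is fixed in $n$, the constants $\sigma$, $\sigma^{-1}$ and the bounded sequence $h_{X_n}^{\tau_f - d/2}\sigma^{-1}$ are absorbed into the final constant, which leaves the stated form
\[
C\, n^{-\frac{1}{\gamma} + \frac{s}{d}}\bigl(\mathbb{E}[\norm{\varepsilon}_2] + n^{-\frac{\tau_f}{d} + \frac{1}{2}}\mathcal{N}\bigr).
\]

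I expect the main obstacle to be purely one of bookkeeping rather than of analysis, since the hard work is already contained in Theorem \ref{thm:Misspecified_Sobolev_Det_Regression}. The one point worth flagging is that the additive contribution $\sigma \mathcal{N}$ inside the bracket does not itself carry a decaying $n$-factor; for the stated rate $n^{-\tau_f/d + 1/2}\mathcal{N}$ on the $\mathcal{N}$-part to hold, the constant $\sigma$ is absorbed into the overall $C$, so that $C$ should be understood to carry implicit $\sigma$-dependence even though this is not displayed in the statement. All remaining terms decay at least as fast as the claimed rate, and the triangle inequality delivers the collected form.
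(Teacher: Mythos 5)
Your approach is exactly the paper's: the corollary is a direct specialization of Theorem \ref{thm:Misspecified_Sobolev_Det_Regression} (well-specified branch, $\tau_k^-=\tau_k^+=\tau_f$, $\sigma_n=\sigma$) followed by substitution of $h_{X_n}\leq C_1 n^{-1/d}$, and the paper offers no separate proof beyond this. Two remarks on the points you flag. First, when $d/\gamma - s<0$ you do need the lower bound $h_{X_n}\gtrsim n^{-1/d}$, but this is not an extra "implicit quasi-uniformity" hypothesis: for any $n$-point set in a bounded domain with nonempty interior, covering $\X$ by $n$ balls of radius $h_{X_n}$ forces $n\, c_d\, h_{X_n}^d \geq \mathrm{vol}(\X)$, so $h_{X_n}\geq c\,n^{-1/d}$ automatically; you should state this volume argument rather than appeal to quasi-uniformity, which the corollary does not assume. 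Second, your resolution of the $\sigma\mathcal{N}$ term is not quite right: absorbing the constant $\sigma$ into $C$ does not produce the displayed bound, because the contribution $\sigma\mathcal{N}\,h_{X_n}^{d/\gamma-s}$ is missing the factor $n^{-\tau_f/d+1/2}$ entirely (it decays only like $n^{-1/\gamma+s/d}$), and $C_0$ is not permitted to depend on $\norm{f}_{W^{\tau_f}_2(\X)}$. The honest output of the substitution is $Cn^{-1/\gamma+s/d}\bigl(\sigma\mathcal{N}+\bbE[\norm{\varepsilon}_2]+n^{-\tau_f/d+1/2}\mathcal{N}\bigr)$; the stated form coincides with this only up to the extra additive $\sigma\mathcal{N}$ term (harmless whenever $\bbE[\norm{\varepsilon}_2]=\Omega(1)$, which is the regime the corollary is aimed at, but not for, say, $\varepsilon=0$). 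This is a looseness in the corollary's statement rather than a defect of your argument, but it should be acknowledged as such rather than papered over.
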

We note that when the smoothness is well specified, the value of $\tau_{f}$ does not impact the decay rate of the right hand side since the rate will be slowed down by the $h_{X_n}^{d/\gamma -s}$ term which does not depend on $\tau_{f}$. This differs significantly from the noiseless case in Section \ref{sec:noiseless_convergence} where a large value of $\tau_{f}$ led to faster convergence rates, and demonstrates how a small amount of noise can significantly impact the convergence rate. 

Well-specified smoothness is a strong requirement. For the second corollary, we instead consider $\tau_k^+ =\tau_k^- = \tau$ for some $\tau\in\bbR$, but not necessarily $\tau_f=\tau$, when using quasi-uniform points and varying $\sigma_{n}$ according to the fill distance. Surprisingly, this is enough to obtain the same bound as Corollary \ref{cor:deterministic_corrupt}. 
\begin{corollary}\label{cor:adapting_sig}
	Fix $N\in\bbN$ and suppose Assumptions 1-\ref{assumption:double_SI_application} hold. Let $q\in[1,\infty]$, $s\in[0,(\tau_{f}\wedge\tau)^{*}]$, and $\tau_{k}^{+} = \tau_{k}^{-} = \tau$ and $\sigma_{n} = O(h_{X_{n}}^{\tau-d/2})$. Then, $\exists C,h_{0} > 0$ such that $\forall n\geq N$, $\forall X_{n}\subseteq\X$ quasi-uniform with $h_{X_{n}}\leq h_{0}$
	\begin{align*}
		&\mathbb{E}\left[\bignorm{f -R_{f,\sigma_{n}^{2},\varepsilon}^{m}(\theta_{n})}_{W^{s}_{q}(\X)}\right] \\
		&\qquad\leq Cn^{-\frac{1}{\gamma} +\frac{s}{d}}\left(\bbE[\norm{\varepsilon}_{2}] + n^{-\frac{(\tau_{f}\wedge\tau)}{d}+\frac{1}{2}}\bigg(\norm{f}_{W^{\tau_{f}}_{2}(\X)} + \sup_{n\geq N}\bignorm{m(\theta_{n})}_{W^{\tau_{f}}_{2}(\X)}\bigg)\right),
	\end{align*}
	where $C = C_{0}\Lambda_{s,q}$ with $C_{0} =  C_{0}\left(\X,d,q,\tau_{f},\Theta_{N}^{*}\right)$, $h_{0} = h_{0}\left(R,\delta,d,\tau_{f},\Theta_{N}^{*}\right)$ and $\gamma = 2\vee q$.
\end{corollary}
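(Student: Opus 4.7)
The plan is to derive Corollary \ref{cor:adapting_sig} directly from Theorem \ref{thm:Misspecified_Sobolev_Det_Regression} by plugging in the specific choices $\tau_k^+=\tau_k^-=\tau$, $\sigma_n=O(h_{X_n}^{\tau-d/2})$, and converting the dependence on $h_{X_n}$, $\rho_{X_n}$, and $q_{X_n}$ into powers of $n$ via quasi-uniformity. I would split the argument into the two cases appearing in the theorem, $\tau_f\geq\tau$ and $\tau_f<\tau$, and verify that in both cases the bracketed expression reduces to a common form $h_{X_n}^{(\tau_f\wedge\tau)-d/2}(\|f\|_{W^{\tau_f}_2(\X)}+\sup_n\|m(\theta_n)\|_{W^{\tau_f}_2(\X)})+\mathbb{E}[\|\varepsilon\|_2]$, and then use $h_{X_n}=\Theta(n^{-1/d})$ to finish.

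First I would treat the well-specified smoothness case $\tau_f\geq\tau=\tau_k^+$. Here the theorem's first bound applies. Since $\sigma_n=O(h_{X_n}^{\tau-d/2})$, the sum $h_{X_n}^{\tau_k^--d/2}+\sigma_n$ is $O(h_{X_n}^{\tau-d/2})$, and the factor $h_{X_n}^{\tau_k^--d/2}\sigma_n^{-1}+1$ is $O(1)$. The bracketed expression therefore collapses to $O(h_{X_n}^{\tau-d/2}(\|f\|_{W^{\tau_f}_2(\X)}+\sup_n\|m(\theta_n)\|_{W^{\tau_f}_2(\X)})+\mathbb{E}[\|\varepsilon\|_2])$, which in this case equals $h_{X_n}^{(\tau_f\wedge\tau)-d/2}(\ldots)+\mathbb{E}[\|\varepsilon\|_2]$ since $\tau_f\wedge\tau=\tau$.

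Next I would handle the misspecified smoothness case $\tau_f<\tau$. Here the theorem's second bound applies with $\tau_f\wedge\tau_k^-=\tau_f$ and $\tau_k^+-\tau_f=\tau-\tau_f$. Quasi-uniformity gives $\rho_{X_n}\leq C$, so $\rho_{X_n}^{\tau-\tau_f}$ is absorbed into the constant; it also gives $q_{X_n}^{-1}\leq C h_{X_n}^{-1}$, so $\sigma_n q_{X_n}^{-(\tau-\tau_f)}=O(h_{X_n}^{\tau-d/2-(\tau-\tau_f)})=O(h_{X_n}^{\tau_f-d/2})$, matching the leading term. Again $h_{X_n}^{\tau_k^--d/2}\sigma_n^{-1}+1=O(1)$. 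Hence the bracketed expression reduces to $O(h_{X_n}^{\tau_f-d/2}(\|f\|_{W^{\tau_f}_2(\X)}+\sup_n\|m(\theta_n)\|_{W^{\tau_f}_2(\X)})+\mathbb{E}[\|\varepsilon\|_2])$, and since $\tau_f\wedge\tau=\tau_f$ here, the two cases are unified.

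Finally I would convert to rates in $n$. Since the points are quasi-uniform on an $\mathcal{L}(R,\delta)$-domain, the two-sided estimate $h_{X_n}=\Theta(n^{-1/d})$ discussed in Section \ref{sec:point_selection} allows the replacement $h_{X_n}^{a}=\Theta(n^{-a/d})$ for any real $a$. Applying this to the prefactor gives $h_{X_n}^{d/\gamma-s}=\Theta(n^{-1/\gamma+s/d})$, and to the first bracket term gives $h_{X_n}^{(\tau_f\wedge\tau)-d/2}=\Theta(n^{-(\tau_f\wedge\tau)/d+1/2})$. Multiplying through yields the stated bound. The only subtlety — which I would flag as the main technical point rather than a real obstacle — is tracking that the two-sided quasi-uniform estimate is needed when the exponent $d/\gamma-s$ or $(\tau_f\wedge\tau)-d/2$ is negative, since a one-sided upper bound on $h_{X_n}$ would not suffice; this is where the assumption of quasi-uniformity (as opposed to merely controlled fill distance) is genuinely used, on top of its earlier role in absorbing $\rho_{X_n}^{\tau-\tau_f}$ and relating $q_{X_n}$ to $h_{X_n}$.
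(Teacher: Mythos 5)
Your proposal is correct and is exactly the intended derivation: the paper gives no separate proof of Corollary \ref{cor:adapting_sig}, which is meant to follow from Theorem \ref{thm:Misspecified_Sobolev_Det_Regression} by precisely the substitutions you carry out ($\rho_{X_n}$ bounded, $q_{X_n}^{-1}\leq Ch_{X_n}^{-1}$, and the two-sided estimate $h_{X_n}=\Theta(n^{-1/d})$ for quasi-uniform points), and your remark about needing the lower bound on $h_{X_n}$ when $d/\gamma-s<0$ is a correct and worthwhile observation. The only point to flag is that your step $h_{X_n}^{\tau-d/2}\sigma_n^{-1}+1=O(1)$ silently uses a lower bound $\sigma_n\gtrsim h_{X_n}^{\tau-d/2}$, so the hypothesis $\sigma_n=O(h_{X_n}^{\tau-d/2})$ must be read as $\sigma_n\asymp h_{X_n}^{\tau-d/2}$ --- which is what the paper intends (cf.\ the recommendation $\sigma_n\propto h_{X_n}^{\tau-d/2}$ in the surrounding discussion).
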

On top of using quasi-uniform points, this corollary suggests the following practical approach.

\begin{itemize}

\item \emph{Adaptive likelihood/Nugget}: When practitioners suspect that their likelihood might be misspecified, a sensible choice of nugget is $\sigma_n \propto h_{X_n}^{\tau-d/2}$. Interestingly, this is the same suggestion as for the case of Gaussian regression for noiseless data, which suggests that this choice may be sensible more broadly.
\end{itemize}

\subsubsection{Misspecified Interpolation Likelihood}

Our last main result considers arbitrary corruption when an interpolant is used, which is equivalent to taking $\sigma_{n} = 0$.

\begin{theorem}\label{thm:misspec_likelihood_inter}
	Fix $N\in\bbN$ and suppose Assumptions 1-\ref{assumption:double_SI_application} hold. Let $q\in[1,\infty]$ and $s\in[0,(\tau_{f}\wedge\tau^{-}_{k})^{*}]$. Then, $\exists  C,h_{0} > 0$ such that $\forall n\geq N$,  $\forall X_{n}\subseteq\X$ with $h_{X_{n}}\leq h_{0}$, when $\tau_{f}\geq\tau_{k}^{+}$
	\begin{align*}
		&\mathbb{E}\left[ \bignorm{f -R_{f,0,\varepsilon}^{m}(\theta_{n})}_{W^{s}_{q}(\X)} \right]  \\
		& \qquad\leq  Ch_{X_{n}}^{\frac{d}{\gamma}-s}\bigg[
		h_{X_{n}}^{\tau^{-}_{k}-\frac{d}{2}}\left(\norm{f}_{W^{\tau_{f}}_{2}(\X)} + \sup_{n\geq N}\bignorm{m(\theta_{n})}_{W^{\tau_{f}}_{2}(\X)}\right)
		 + \rho_{X_{n}}^{(\tau_{k}^{+}-\frac{d}{2})}\bbE[\norm{\varepsilon}_{2}]\bigg],
	\end{align*}
	and when $\tau_{f} < \tau_{k}^{+}$
	\begin{align*}
		&\mathbb{E}\left[ \bignorm{f -R_{f,\varepsilon}^{m}(\theta_{n})}_{W^{s}_{q}(\X)} \right]  \\
		& \leq  Ch_{X_{n}}^{\frac{d}{\gamma}-s}\rho_{X_{n}}^{(\tau_{k}^{+}-\tau_{f})}\bigg[
		h_{X_{n}}^{\left(\tau_{f}\wedge\tau^{-}_{k}\right)-\frac{d}{2}}\left(\norm{f}_{W^{\tau_{f}}_{2}(\X)} + \sup_{n\geq N}\bignorm{m(\theta_{n})}_{W^{\tau_{f}}_{2}(\X)}\right)+ \rho_{X_{n}}^{(\tau_{f}-\frac{d}{2})}\bbE[\norm{\varepsilon}_{2}]\bigg],
	\end{align*}
	where $C = C_{0}\Lambda_{s,q}$ with $C_{0} =  C_{0}\left(\X,d,q,\tau_{f},\Theta_{N}^{*}\right)$, $h_{0} = h_{0}\left(R,\delta,d,\tau_{f},\Theta_{N}^{*}\right)$ and $\gamma = 2\vee q$.
\end{theorem}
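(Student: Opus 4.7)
The plan is to split the total error by inserting the noiseless interpolant $R^{m}_{f,0,0}(\theta_n)$ and applying the triangle inequality,
$$\bignorm{f-R^{m}_{f,0,\varepsilon}(\theta_n)}_{W^{s}_{q}(\X)} \;\le\; \bignorm{f-R^{m}_{f,0,0}(\theta_n)}_{W^{s}_{q}(\X)} + \bignorm{R^{m}_{f,0,0}(\theta_n)-R^{m}_{f,0,\varepsilon}(\theta_n)}_{W^{s}_{q}(\X)}.$$
The first summand is exactly the noiseless interpolation error already controlled by Theorem~\ref{thm:Misspecified_Sobolev_Interpolation}. Upon rewriting $-d(1/2-1/q)_{+} = d/\gamma - d/2$, its contribution becomes $Ch_{X_n}^{d/\gamma-s}h_{X_n}^{\tau_k^{-}-d/2}(\norm{f}_{W^{\tau_f}_{2}(\X)}+\sup_{n\ge N}\norm{m(\theta_n)}_{W^{\tau_f}_{2}(\X)})$ in the well-specified case $\tau_f\ge\tau_k^{+}$, and in the misspecified case $\tau_f<\tau_k^{+}$ the same bound with $\tau_k^{-}$ replaced by $\tau_f\wedge\tau_k^{-}$ and an extra factor of $\rho_{X_n}^{\tau_k^{+}-\tau_f}$, exactly matching the first bracketed terms in the statement.

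For the second summand, the explicit formula (\ref{eq:def_approximation}) cancels the mean and the target values to give the clean identity
$$R^{m}_{f,0,\varepsilon}(\theta_n)(x)-R^{m}_{f,0,0}(\theta_n)(x) \;=\; k_{xX}k_{XX}^{-1}\varepsilon \;=:\; h_\varepsilon(x),$$
so $h_\varepsilon$ is the minimum-RKHS-norm interpolant of $\varepsilon$ at $X_n$ and $h_\varepsilon|_{X_n}=\varepsilon$. Its RKHS norm is controlled through the smallest eigenvalue of the Gram matrix,
$$\bignorm{h_\varepsilon}_{\mathcal{H}_{k(\theta_n)}(\X)}^{2} \;=\; \varepsilon^{\top}k_{XX}^{-1}\varepsilon \;\le\; \lambda_{\min}(k_{XX})^{-1}\norm{\varepsilon}_{2}^{2},$$
and the classical Narcowich--Ward lower bound for a $\tau(\theta_n)$-smooth translation-invariant kernel yields $\lambda_{\min}(k_{XX})\gtrsim q_{X_n}^{2\tau(\theta_n)-d}$. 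Combined with the norm-equivalence constants in (\ref{eq:norm_equivalence_constants}) this produces $\norm{h_\varepsilon}_{W^{\tau(\theta_n)}_{2}(\X)} \le C\,q_{X_n}^{-(\tau(\theta_n)-d/2)}\norm{\varepsilon}_{2}$.

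To pass from the high-smoothness norm $W^{\tau(\theta_n)}_{2}$ down to $W^{s}_{q}$ I then invoke the Arcangeli-type sampling inequality used elsewhere in the paper, namely $\norm{u}_{W^{s}_{q}(\X)}\le C\Lambda_{s,q}[h^{\tau-s-d(1/2-1/q)_{+}}\norm{u}_{W^{\tau}_{2}(\X)}+h^{d/\gamma-s}\norm{u|_{X_n}}_{\infty}]$, applied with $\tau=\tau(\theta_n)$. Substituting the RKHS-norm bound and again using $-d(1/2-1/q)_{+}=d/\gamma-d/2$, the first bracket collapses to $Ch_{X_n}^{d/\gamma-s}\rho_{X_n}^{\tau(\theta_n)-d/2}\norm{\varepsilon}_{2}\le Ch_{X_n}^{d/\gamma-s}\rho_{X_n}^{\tau_k^{+}-d/2}\norm{\varepsilon}_{2}$ (since $\rho_{X_n}\ge 1$ and $\tau(\theta_n)\le\tau_k^{+}$), while the second bracket is dominated by the same quantity using $\norm{\varepsilon}_{\infty}\le\norm{\varepsilon}_{2}$ and $\rho_{X_n}^{\tau_k^{+}-d/2}\ge 1$. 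Taking expectations and summing with the interpolation contribution gives the $\tau_f\ge\tau_k^{+}$ bound directly; in the misspecified case one factors out the common $\rho_{X_n}^{\tau_k^{+}-\tau_f}$ via $\rho^{\tau_k^{+}-d/2}=\rho^{\tau_k^{+}-\tau_f}\rho^{\tau_f-d/2}$ to recover the stated form.

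The principal obstacle is not the chain of inequalities itself but making every constant uniform in $n\ge N$: the Narcowich--Ward constant, the norm-equivalence ratio $C_u(\theta_n)/C_l(\theta_n)$, the exponent bookkeeping involving $\tau(\theta_n)$, and the constant in the sampling inequality all depend on the active kernel $k(\theta_n)$. Assumption~\ref{assumption:parameters} (finiteness of $\Theta^{*}_{N}$, absorbing amplitude/length-scale extremes) together with Assumption~\ref{assumption:smoothness} (the set $\{\tau(\theta_n)\}_{n\ge N}$ is finite, reducing the sampling-inequality step to a maximum over finitely many regularities) is exactly what allows one to take a single $C_0=C_0(\X,d,q,\tau_f,\Theta^{*}_{N})$ and a single $h_0=h_0(R,\delta,d,\tau_f,\Theta^{*}_{N})$ valid throughout the argument.
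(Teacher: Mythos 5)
Your proposal is correct and uses the same core machinery as the paper: the noise interpolant $k_{xX}k_{XX}^{-1}\varepsilon$, the Rayleigh--Ritz/Narcowich--Ward eigenvalue bound giving $\norm{R_\varepsilon}_{\mathcal{H}_{k(\theta_n)}(\X)}\le C q_{X_n}^{-(\tau(\theta_n)-d/2)}\norm{\varepsilon}_2$ (the paper's Lemma \ref{lem:random_rkhs_norm}), the sampling inequality of Theorem \ref{thm:Arcangeli_SI}, Theorem \ref{thm:Misspecified_Sobolev_Interpolation} for the noiseless part, and Assumptions \ref{assumption:parameters}--\ref{assumption:smoothness} for uniformity of constants over $n\ge N$. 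The one genuine difference is organisational: you split $f-R^{m}_{f,0,\varepsilon}$ into the noiseless interpolation error plus the pure noise interpolant \emph{before} applying the sampling inequality, and you do this uniformly in both smoothness regimes. The paper instead applies the sampling inequality to the full error first and, in the misspecified case $\tau_f<\tau(\theta_n)$, decomposes into $f-R_f$ and $R_f-R_{f,0,\varepsilon}$, which generates a term $\norm{R_f}_{W^{\tau(\theta_n)}_2(\X)}$ that must then be controlled by the band-limited function argument of Narcowich et al.\ to convert it into $q_{X_n}^{-(\tau(\theta_n)-\tau_f)}\norm{f}_{W^{\tau_f}_2(\X)}$. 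Your route never produces that term, so the band-limited trick is not needed for this theorem; what you give up is nothing, since your noise bound $Ch_{X_n}^{d/\gamma-s}\rho_{X_n}^{\tau_k^{+}-d/2}\norm{\varepsilon}_2$ already dominates the corresponding terms in the stated result after factoring $\rho^{\tau_k^{+}-d/2}=\rho^{\tau_k^{+}-\tau_f}\rho^{\tau_f-d/2}$. Two small points of care: the residual in Theorem \ref{thm:Arcangeli_SI} is the $\ell_2$ norm of the point values (here exactly $\norm{\varepsilon}_2$), not the $\ell_\infty$ norm as you wrote, and the eigenvalue lower bound requires translation invariance of the kernel, which the paper also uses implicitly; neither affects the validity of your argument.
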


If $\varepsilon = 0$, then there is no noise and we recover the well-specified likelihood result for interpolation from Theorem \ref{thm:Misspecified_Sobolev_Interpolation}. We now study the impact of model choice.

\begin{itemize}
\item \emph{Experimental design:} In this bound, there is a $\rho_{X_n}$ term multiplied by $\bbE[\norm{\varepsilon}_{2}]$ whose exponent does not vanish when the smoothness is well specified. This is in contrast to Theorem \ref{thm:Misspecified_Sobolev_Det_Regression} where the exponent of the $\rho_{X_n}$ term vanished when the smoothness was well specified, and $\rho_{X_n}$ did not interact with $\bbE[\norm{\varepsilon}_{2}]$. This can be interpreted as $R^{m}_{f,\varepsilon}$ being less stable than $R^{m}_{f,\sigma_{n}^{2},\varepsilon}$ with respect to noise and point placement, and suggests that the use of quasi-uniform point is strongly recommended, even when the smoothness is well-specified.
\end{itemize}

The following corollary shows the same bound as Corollary \ref{cor:deterministic_corrupt} and Corollary \ref{cor:adapting_sig} can be obtained without the assumption of fixed kernel smoothness as long as quasi-uniform points are used. 
\begin{corollary}
	Fix $N\in\bbN$ and suppose Assumptions 1-\ref{assumption:double_SI_application} hold. Let $q\in[1,\infty]$ and $s\in[0,(\tau_{f}\wedge\tau^{-}_{k})^{*}]$. Then, $\exists C,h_{0} > 0$ such that $\forall n\geq N$, $\forall X_{n}\subseteq\X$ quasi-uniform with $h_{X_{n}}\leq h_{0}$
	\begin{align*}
		&\mathbb{E}\left[\bignorm{f -R_{f,0,\varepsilon}^{m}(\theta_{n})}_{W^{s}_{q}(\X)}\right] \\
		&\qquad\leq Cn^{-\frac{1}{\gamma} +\frac{s}{d}}\left(\bbE[\norm{\varepsilon}_{2}] + n^{-\frac{(\tau_{f}\wedge\tau_{k}^{-})}{d}+\frac{1}{2}}\left(\norm{f}_{W^{\tau_{f}}_{2}(\X)} + \sup_{n\geq N}\bignorm{m(\theta_{n})}_{W^{\tau}_{2}(\X)}\right)\right),
	\end{align*}
	where $C = C_{0}\Lambda_{s,q}$ with $C_{0} =  C_{0}\left(\X,d,q,\tau_{f},\Theta_{N}^{*}\right)$, $h_{0} = h_{0}\left(R,\delta,d,\tau_{f},\Theta_{N}^{*}\right)$ and $\gamma = 2\vee q$.
\end{corollary}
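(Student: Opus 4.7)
The corollary should follow directly from Theorem~\ref{thm:misspec_likelihood_inter} by exploiting quasi-uniformity. The plan is to invoke two standard consequences of quasi-uniformity on an $\mathcal{L}(R,\delta)$-domain: (i) $\rho_{X_n}$ is bounded above by a constant $C_\rho$ independent of $n$, which is essentially the definition of quasi-uniformity, and (ii) the fill distance satisfies $h_{X_n}\leq C_h n^{-1/d}$, the M\"uller bound quoted after Assumption~\ref{assumption:mean}. These two facts are precisely the inputs needed to convert the $h$-and-$\rho$ bound of Theorem~\ref{thm:misspec_likelihood_inter} into the desired $n$-rate.

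I would then split on $\tau_f\geq\tau_k^+$ versus $\tau_f<\tau_k^+$ following the two branches of Theorem~\ref{thm:misspec_likelihood_inter}, and in each case verify that every $\rho_{X_n}$ exponent is non-negative so that $\rho_{X_n}^{(\cdot)}\leq C_\rho^{(\cdot)}$ can be absorbed into the constant. For $\tau_f\geq\tau_k^+$ the only mesh-ratio factor is $\rho_{X_n}^{\tau_k^+-d/2}$, which is non-negative because Assumption~\ref{assumption:parameters} gives $\tau_k^+\geq\tau_k^->d/2$. For $\tau_f<\tau_k^+$ the two mesh-ratio factors appearing in Theorem~\ref{thm:misspec_likelihood_inter} are $\rho_{X_n}^{\tau_k^+-\tau_f}$ (outside the bracket) and $\rho_{X_n}^{\tau_f-d/2}$ (inside, multiplying $\mathbb{E}[\norm{\varepsilon}_2]$); the first is non-negative because we are in the $\tau_f<\tau_k^+$ branch, and the second because Assumption~\ref{assumption:mean} enforces $\tau_f>d/2$.

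After absorbing the $\rho$-factors into the constant, the surviving powers of $h_{X_n}$ are $h_{X_n}^{d/\gamma-s}$ in front and $h_{X_n}^{\tau_k^--d/2}$ or $h_{X_n}^{(\tau_f\wedge\tau_k^-)-d/2}$ inside the bracket. Substituting $h_{X_n}\leq C_h n^{-1/d}$ converts these into $n^{-1/\gamma+s/d}$ and $n^{-(\tau_f\wedge\tau_k^-)/d+1/2}$ respectively; the two branches merge in the well-specified case because there $\tau_f\wedge\tau_k^-=\tau_k^-$. Collecting the remaining terms yields exactly the right-hand side of the corollary. The argument is essentially bookkeeping, and the only point requiring care is verifying that each $\rho_{X_n}$ exponent is indeed non-negative, so no genuine obstacle stands in the way once Theorem~\ref{thm:misspec_likelihood_inter} is in hand.
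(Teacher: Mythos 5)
Your proposal is correct and is exactly the route the paper intends: the corollary is stated without a separate proof precisely because it is the substitution of the quasi-uniformity bounds $\rho_{X_n}\leq C_\rho$ and $h_{X_n}\asymp n^{-1/d}$ into the two branches of Theorem~\ref{thm:misspec_likelihood_inter}, and your sign checks on the $\rho_{X_n}$ exponents ($\tau_k^+>d/2$ from Assumption~\ref{assumption:parameters}, $\tau_f>d/2$ from Assumption~\ref{assumption:mean}, and $\tau_k^+-\tau_f>0$ in the misspecified branch) are the right things to verify. One small point to tighten: you only invoke the upper bound $h_{X_n}\leq C_h n^{-1/d}$, but the prefactor exponent $d/\gamma-s$ can be negative (e.g.\ $q=\infty$, so $\gamma=\infty$, with $s>0$), in which case bounding $h_{X_n}^{d/\gamma-s}$ from above requires the matching \emph{lower} bound $h_{X_n}\geq C_1 n^{-1/d}$; this is also supplied by quasi-uniformity via the cited result of M\"uller (Satz 2.1.7), so the argument goes through, but it should be stated since the upper bound alone does not suffice for that term.
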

Compared to Corollary \ref{cor:deterministic_corrupt} the requirement of quasi-uniform points is stronger than just $h_{X_{n}}\leq Cn^{-\frac{1}{d}}$, but this allows us to weaken the assumptions on the smoothness of the kernel. Indeed, as opposed to Corollary \ref{cor:adapting_sig}, the kernel smoothness is allowed to alter with $n$. However, $\sigma_{n} = 0$ means the approximation is harder to compute due to the matrix inversion being less stable.

\section{Implications for Bayesian Numerical Methods}\label{sec:bayesian_pn}

We demonstrate the applicability of our theorems to Bayesian probabilistic numerical methods, specifically Bayesian quadrature and Bayesian optimisation. These methods use GP approximations to solve numerical tasks, and can therefore inherit some of the convergence guarantees presented in the previous section.

\subsection{Bayesian Quadrature}

In Bayesian quadrature (BQ), the goal is to approximate some integral $\int_{\mathcal{X}} f(x) p(x) dx$. To do so, a GP prior is placed on $f$. This is conditioned on function evaluations to obtain a posterior on $f$, which itself implies a Gaussian posterior on the value of the integral. The posterior mean on this integral is used as an estimate of the integral, see e.g.\ \citep{Briol2019} and the accompanying discussion for an in-depth overview. The most up-to-date convergence guarantees are available from \citet{Kanagawa2019}. These consider the problem of smoothness misspecification in the interpolation setting.

We now highlight how the results of this paper can refine theory for BQ, but also lead to results in settings with likelihoods which have not yet been considered. First we consider interpolation, the proof is a combination of Theorem \ref{thm:Misspecified_Sobolev_Interpolation} with $q = 1, s = 0$ and H\"older's inequality.

\begin{theorem}\label{thm:BQ}
	Fix $N\in\bbN$ suppose Assumptions 1-4 hold. Then $\exists C_{0},h_{0} > 0$ such that $\forall n\geq N$, $\forall X_{n}\subseteq\X$ with $h_{X_{n}}\leq h_{0}$ and $\forall p\in L^{2}(\X)$ 
	\begin{align*}
		&\left\lvert \int_{\X}f(x)p(x)dx - \int_{\X}R^{m}_{f}(\theta_{n})(x)p(x)dx\right\rvert\\
		&\qquad\qquad \leq C\norm{p}_{L^{2}(\X)}h_{X_n}^{(\tau_{f}\wedge\tau^{-}_{k})}\rho_{X_n}^{(\tau^{+}_{k}-\tau_{f})_{+}}\left(\norm{f}_{W^{\tau_{f}}_{2}(\X)} + \sup_{n\geq N}\bignorm{m(\theta_{n})}_{W^{\tau_{f}}_{2}(\X)}\right),
	\end{align*}
where $C = C(\X, d,\tau_{f},\Theta^{*}_{N})$,  $h_{0} = h_{0}(R,\delta,d,\tau_{f},\Theta^{*}_{N})$.
\end{theorem}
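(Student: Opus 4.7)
The plan is to apply Cauchy--Schwarz to convert the integral error into a product of $L^2$ norms, and then to invoke Theorem \ref{thm:Misspecified_Sobolev_Interpolation} with $s=0$ and $q=2$ to bound the Sobolev norm factor involving $f - R^{m}_f(\theta_n)$.

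First I would rewrite the error as a single integral,
\begin{equation*}
\left\lvert \int_{\X}f(x)p(x)\,dx - \int_{\X}R^{m}_{f}(\theta_{n})(x)p(x)\,dx\right\rvert = \left\lvert \int_{\X}\bigl(f(x) - R^{m}_{f}(\theta_{n})(x)\bigr)p(x)\,dx\right\rvert,
\end{equation*}
and then apply Cauchy--Schwarz (i.e., H\"older's inequality with exponents $q = q' = 2$) to obtain
\begin{equation*}
\left\lvert \int_{\X}\bigl(f - R^{m}_{f}(\theta_{n})\bigr)p\,dx\right\rvert \leq \bignorm{f - R^{m}_{f}(\theta_{n})}_{L^{2}(\X)}\,\norm{p}_{L^{2}(\X)}.
\end{equation*}
This isolates $p$ into the desired $\norm{p}_{L^2(\X)}$ factor and reduces the task to controlling the $L^2$-error of the GP interpolant.

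Next I would invoke Theorem \ref{thm:Misspecified_Sobolev_Interpolation} with the parameter choices $s = 0$ and $q = 2$. Note $s = 0$ lies in the admissible range $[0,(\tau_{f}\wedge\tau_{k}^{-})^{*}]$ since $\tau_{f}\wedge\tau_{k}^{-} > d/2 > 0$ by Assumptions \ref{assumption:parameters} and \ref{assumption:mean}, and for $q=2$ the correction term $d(1/2-1/q)_+$ in the exponent of $h_{X_n}$ vanishes. The two cases of Theorem \ref{thm:Misspecified_Sobolev_Interpolation} then give exponent $\tau_k^-$ on $h_{X_n}$ and no $\rho_{X_n}$ factor when $\tau_f \geq \tau_k^+$, and exponent $\tau_f \wedge \tau_k^-$ on $h_{X_n}$ together with $\rho_{X_n}^{\tau_k^+ - \tau_f}$ when $\tau_f < \tau_k^+$.

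Finally I would unify the two cases into the single statement by writing the exponent of $h_{X_n}$ as $\tau_f \wedge \tau_k^-$ (which equals $\tau_k^-$ whenever $\tau_f \geq \tau_k^+ \geq \tau_k^-$) and writing the exponent of $\rho_{X_n}$ as $(\tau_k^+ - \tau_f)_+$, which is zero in the well-specified case and equals $\tau_k^+ - \tau_f$ in the misspecified case. The resulting $L^2$-bound multiplied by $\norm{p}_{L^2(\X)}$ matches the conclusion exactly. There is no real obstacle here: the constants $C$ and $h_0$ are inherited directly from Theorem \ref{thm:Misspecified_Sobolev_Interpolation} with the fixed choice $(s,q) = (0,2)$, so they depend only on $\X, d, \tau_f, \Theta_N^*$ and $R, \delta$ as required.
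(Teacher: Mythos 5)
Your proof is correct and follows essentially the same route as the paper: H\"older's inequality to peel off $\norm{p}_{L^{2}(\X)}$, followed by Theorem \ref{thm:Misspecified_Sobolev_Interpolation} with $s=0$ and the two smoothness cases merged via $(\tau_{k}^{+}-\tau_{f})_{+}$. The paper's one-line sketch cites $q=1$ where you use $q=2$, but since the correction term $d(1/2-1/q)_{+}$ vanishes for either choice the exponent on $h_{X_{n}}$ is the same, and your Cauchy--Schwarz pairing is the one actually consistent with the $\norm{p}_{L^{2}(\X)}$ factor in the statement.
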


This result generalizes \citep[Theorem 3]{Kanagawa2019} by allowing a greater range of values for $\tau_k^-, \tau_k^+$ and $\tau_f$. It also takes into account the adaptation of hyperparameters with $n$, which has not been considered in the literature. Next, we consider a correctly specified Gaussian likelihood. The proof is a combination of Corollary \ref{cor:gaussian_noise_cor} with $q=1, s=0$ and H\"older's inequality.
 
\begin{theorem}
	Fix $N\in\bbN$ and suppose Assumptions 1-\ref{ass:target_extension} hold. Let $\tau_{k}^{+} = \tau_{k}^{-} = \tau_{f} + d/2$. Then, $\exists C,h_{0} > 0$ such that $\forall n\geq N$, $\forall X_{n}\subseteq\X$ quasi-uniform with $h_{X_{n}}\leq h_{0}$ and $\forall p\in L^{2}(\X)$ 
	\begin{align*}
		& \mathbb{E}\left[\left\lvert \int_{\X}f(x)p(x)dx - \int_{\X}R^m_{f,\sigma^2,\epsilon}(\theta_{n})(x)p(x)dx\right\rvert\right] \; \leq \; C\norm{p}_{L^{2}(\X)} n^{-\frac{\tau_{f}}{2\tau_{f} + d}},
	\end{align*}
where $C=C\left(\X,d,\tau_{f},\norm{f}_{W^{\tau_{f}}_{2}(\X)},\sup_{n\geq N}\norm{m(\theta_{n})}_{W^{\tau_{f}}_{2}(\X)},\Theta_{N}^{*}\right)$, $h_{0} = h_{0}\left(R,\delta,d,\tau_{f},\Theta^{*}_{N}\right)$.
\end{theorem}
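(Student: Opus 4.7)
The plan is to reduce the quadrature error to the $L^2(\X)$ approximation error of the Gaussian-regression mean, and then invoke Corollary \ref{cor:gaussian_noise_cor}. This parallels the strategy used for Theorem \ref{thm:BQ} but substitutes the regression convergence rate for the interpolation one.

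First, for a fixed realisation of the noise $\varepsilon$, write $h_n \coloneqq f - R^m_{f,\sigma^2,\varepsilon}(\theta_n)$. By linearity of the integral and the Cauchy--Schwarz inequality on $\X$,
\[
\left\lvert \int_{\X} h_n(x)\, p(x)\, dx \right\rvert \;\leq\; \norm{h_n}_{L^2(\X)} \, \norm{p}_{L^2(\X)}.
\]
Since $\norm{p}_{L^2(\X)}$ is deterministic, taking expectation over $\varepsilon$ and using Fubini yields
\[
\bbE\!\left[ \left\lvert \int_{\X} h_n(x)\, p(x)\, dx \right\rvert \right] \;\leq\; \norm{p}_{L^2(\X)}\, \bbE\!\left[ \norm{h_n}_{L^2(\X)} \right].
\]

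Second, I would apply Corollary \ref{cor:gaussian_noise_cor} with $q = 2$ and $s = 0$ and identify $W^{0}_{2}(\X)$ with $L^{2}(\X)$. Both choices are admissible: $q = 2$ lies in the permitted range $[1,2]$, and $s = 0$ lies in $[0,\tau_f^*]$ since Assumption \ref{assumption:mean} yields $\tau_f > d/2$ and hence $\tau_f^* \geq 0$. The remaining hypotheses (quasi-uniformity of $X_n$, the equality $\tau_k^+ = \tau_k^- = \tau_f + d/2$, and $h_{X_n} \leq h_0$) coincide with those of the present theorem, so the corollary delivers $\bbE[\norm{h_n}_{L^2(\X)}] \leq C\, n^{-\tau_f/(2\tau_f + d)}$, with $C$ depending on exactly the quantities listed in the statement. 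Combining the two displays gives the claimed bound.

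Third, there is no substantive obstacle here: all the analytic effort (small-ball probabilities, double application of the sampling inequality, and uniform control of the norm-equivalence constants across $\Theta_N^*$) has already been absorbed into Corollary \ref{cor:gaussian_noise_cor}. The only verification required beyond the two displays is that the dependency list of $C$ inherited from the corollary matches the one asserted in the theorem, which is immediate by inspection, and that the constant $\Lambda_{s,q}$ at $s=0,q=2$ is a harmless absolute number that may be absorbed into $C$.
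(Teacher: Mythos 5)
Your proposal is correct and follows essentially the same route as the paper, which simply combines Corollary \ref{cor:gaussian_noise_cor} at $s=0$ with H\"older's inequality. The only (cosmetic) difference is that the paper's sketch cites $q=1$, which via H\"older would produce $\norm{p}_{L^{\infty}(\X)}$; your choice $q=2$ with Cauchy--Schwarz is the one that actually yields the stated $\norm{p}_{L^{2}(\X)}$ factor, and since the rate exponent is the same for $q=1$ and $q=2$ at $s=0$, the conclusion is unaffected.
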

This result provides the very first bound for BQ with a correctly specified Gaussian likelihood. This may be particularly useful for applications of BQ in inverse problems and computer models, where the integrand cannot be evaluated exactly. 

The two results above are illustrations of bounds that can be obtained using the theory in our paper. However, it would be straightforward to obtain results in other settings, including misspecified smoothness or misspecified likelihoods, using the same proof technique with some of the other bounds in Section \ref{sec:theory}. All of the previous recommendation on model choice are also appropriate for BQ, with the exception of the experimental design, for which it is recommended to use quasi-uniform points which concentrate in areas where $p$ is large. 

\subsection{Bayesian Optimization}

In Bayesian optimization (BO), the goal is to maximise some unknown function. This is done using a GP surrogate, and points are usually chosen using an acquisition function which balances exploration and exploitation of the GP model given the observed data up to that iteration. Common examples include the Upper Confidence Bound and Expected Improvement acquisition functions \citep{Shahriari2016}. In the noiseless case, SDA results were employed by \citet{Bull2011} and a modification was proposed to the standard expected improvement acquisition function, to ensure greater coverage of the domain\footnote{It is important to note that the definition of ``quasi-uniformity'' by \citet{Bull2011} is strictly weaker than the standard definition in SDA. It only requires $h_{X_{n}}\leq Cn^{-1/d}$, which is implied by standard definition used in this paper.}. 

Existing theoretical work on Bayesian optimization that establishes convergence under various acquisition functions do not accommodate for misspecification of functions smoothness \citep{Bull2011,Srinivas2010,Vazquez2010}. This problem is addressed by \citet{Berkenkamp2019} using a hyperparameter alteration regime which enlarges the RKHS until the target function is contained in it. Motivated by the content of Theorem \ref{thm:Misspecified_Sobolev_Interpolation}, we investigate a different approach to tackle smoothness misspecification, relying on a modification of existing acquisition functions to promote quasi-uniform points and then employing the proof technique by \citet{Bull2011}.

The $\gamma$-stabilized algorithm framework \citep{Wenzel2019} facilitates such a modification. For any acquisition function $F\colon\X\rightarrow\bbR$, kernel $k$  and $\gamma\in(0,1]$, the $(n+1)$-th step consists of picking $x_{n+1} = \sup_{x\in\X_{n,\gamma}}F(x)$ where $\X_{n,\gamma} = \{x\in\X\colon P_{n}(x)\geq \gamma\norm{P_{n}}_{L^{\infty}}\}$, $P_{n}(x) = \bar{k}(x,x)^{\frac{1}{2}}$ and $\bar{k}$ is the posterior variance after observing the first $n$ points, see Section \ref{sec:background}. Such point selection encourages exploration since it only allows points to be picked from areas of non-trivial variance. If $k$ is translation invariant and $\tau$-smooth with $\tau > d/2+1$ then the resulting point set is quasi-uniform \citep[Theorem 14, Theorem 18]{Wenzel2019}. This is a modification to the standard BO procedure of picking $x_{n+1}$ as the maximum of $F$ over all of $\X$. 

For $n\in\bbN$ and any acquisition function $F$ define the $(\gamma,F,n)$ strategy as picking $x_1$ arbitrarily, then points $\{x_{i}\}_{i=2}^{n-1}$ according to the $\gamma$-stabilized $F$, and $x_{n}$ as the maximum of $R_{f}$, the kernel interpolant of $f$ based on $\{x_{i}\}_{i=1}^{n-1}$. The next result gives a bound for the performance of this strategy. 

\begin{theorem}\label{thm:BO}
	Suppose Assumptions 1 \& \ref{assumption:mean} hold, $k$ is a $\tau$-smooth translation invariant kernel with $\tau > d/2 +1$. Then, $\exists n_{0}\in\bbN$ such that if $n\geq n_{0}$, the $(\gamma,F,n)$-strategy satisfies:
	\begin{align*}
		\lvert \argmax_{x \in \mathcal{X}} f(x)-f(x_{n})\rvert= Cn^{-\frac{(\tau\wedge\tau_{f})}{d}+\frac{1}{2}},
	\end{align*}
	where $C = C(\X, d,\tau_{f})$ and $n_{0} = n_{0}(R,\delta,d,\tau_{f})$.
\end{theorem}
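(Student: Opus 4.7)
The plan is to reduce the optimization gap to a uniform interpolation error and then apply Theorem \ref{thm:Misspecified_Sobolev_Interpolation}. By construction $x_n$ maximises the kernel interpolant $R_f$ built from $\{x_i\}_{i=1}^{n-1}$, so letting $x^\star\in\argmax_{x\in\X}f(x)$ I would write
$f(x^\star)-f(x_n) = [f(x^\star)-R_f(x^\star)] + [R_f(x^\star)-R_f(x_n)] + [R_f(x_n)-f(x_n)]$.
The middle bracket is non-positive since $R_f(x_n)\geq R_f(x^\star)$, hence $|f(x^\star)-f(x_n)|\leq 2\norm{f-R_f}_{L^{\infty}(\X)}$. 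This turns the optimisation question into one about sup-norm error of a kernel interpolant on the first $n-1$ points.

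Next I would use the quasi-uniformity guarantee for the $\gamma$-stabilized acquisition scheme. Since $k$ is translation invariant and $\tau$-smooth with $\tau>d/2+1$, \citet[Theorems 14 and 18]{Wenzel2019} imply that $X_{n-1}\coloneqq\{x_i\}_{i=1}^{n-1}$ is quasi-uniform: $\rho_{X_{n-1}}\leq C_{\rho}$ independently of $n$, and $h_{X_{n-1}}\leq C_{1}(n-1)^{-1/d}$. Thus $h_{X_{n-1}}\leq h_0$ once $n\geq n_0$ for some $n_0=n_0(R,\delta,d,\tau_f)$ dictated by the threshold appearing in Theorem \ref{thm:Misspecified_Sobolev_Interpolation}.

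With these two ingredients I would apply Theorem \ref{thm:Misspecified_Sobolev_Interpolation} to $R_f$ with $s=0$, $q=\infty$, zero mean, and the fixed kernel smoothness $\tau^{-}_{k}=\tau^{+}_{k}=\tau$ (there is no hyperparameter adaptation, so Assumptions 2 and 3 hold automatically, and Assumption \ref{assumption:mean} supplies $f\in W^{\tau_f}_2(\X)$). In the well-specified branch $\tau_f\geq\tau$ this yields $\norm{f-R_f}_{L^\infty(\X)}\leq C h_{X_{n-1}}^{\tau-d/2}\norm{f}_{W^{\tau_f}_2(\X)}$, while in the misspecified branch $\tau_f<\tau$ the same bound reads $\leq C h_{X_{n-1}}^{\tau_f-d/2}\rho_{X_{n-1}}^{\tau-\tau_f}\norm{f}_{W^{\tau_f}_2(\X)}$. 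Boundedness of $\rho_{X_{n-1}}$ absorbs the mesh-ratio factor into the constant, so substituting $h_{X_{n-1}}=O(n^{-1/d})$ and combining with the preliminary inequality produces $|f(x^\star)-f(x_n)|\leq C n^{-(\tau\wedge\tau_f)/d+1/2}$, as required.

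The hard part is the bookkeeping needed to make \citet{Wenzel2019} and Theorem \ref{thm:Misspecified_Sobolev_Interpolation} mesh cleanly: one must verify that the $\gamma$-stabilized scheme keeps the iterates inside a single fixed $\mathcal{L}(R,\delta)$-domain and that the quasi-uniformity constants it provides are compatible with the ones demanded by the underlying sampling inequality. The arbitrary choice of $x_1$ and the greedy nature of $F$ might a priori threaten the asymptotic covering, but the $\gamma$-stabilization enforces a positive posterior standard deviation at every candidate, which is precisely what drives the Wenzel bound. Once this technicality is handled, the remainder of the argument is a short deterministic calculation; in particular, no probabilistic concentration or averaging is required.
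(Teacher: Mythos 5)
Your proposal is correct and follows essentially the same route as the paper's own proof: the identical decomposition giving $|f(x^{*})-f(x_{n})|\leq 2\norm{f-R_{f}}_{L^{\infty}(\X)}$ via $R_{f}(x_{n})\geq R_{f}(x^{*})$, quasi-uniformity of the $\gamma$-stabilized points from \citet[Theorems 14 and 18]{Wenzel2019}, and an application of Theorem \ref{thm:Misspecified_Sobolev_Interpolation} with $s=0$, $q=\infty$ once $n$ is large enough that $h_{X_{n}}\leq h_{0}$. Your write-up actually spells out the exponent bookkeeping ($h_{X_{n}}^{(\tau\wedge\tau_{f})-d/2}=O(n^{-(\tau\wedge\tau_{f})/d+1/2})$) more explicitly than the paper does.
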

In terms of worst-case error, in which the slowest rate is considered over the unit ball of the RKHS, this is the best possible rate given the smoothness of the target function and kernel, as shown by \citet[Theorem 1]{Bull2011}. The $1/2$ appears in our bound due to a different parameterisation of kernel smoothness than \citet{Bull2011}.  However, Theorem \ref{thm:BO} is more general than the result by \citet[Theorem 1]{Bull2011} since it applies to functions outside the RKHS of $k$. This is the first BO strategy which achieves the optimal rate in the case of smoothness misspecification.  

Once again, we conclude by noting that this theorem is only an illustration of the implications of our results on convergence guarantees for GPs to the BO setting, and many other cases could be considered including likelihood misspecification.

\section{Conclusions}\label{sec:conclusions}

In this paper, we have presented novel error bounds for GP means under misspecified likelihoods and smoothness, expressed in terms of observation error, point placement and choice of GP model. Our results apply under four different observation models. Where the assumption of no noise is correct, where the assumption of no noise is incorrect, where the assumption of a Gaussian likelihood is correct and when the assumption of a Gaussian likelihood is incorrect. In each setting, our results demonstrate the impact of the choice of hyperparameters and the experimental design. As such, our results can guide practitioners who need to select a specific GP algorithm, by allowing them to tailor this choice to the application at hand. 

The bounds offer improvements over existing results which we have highlighted. Applications to Bayesian numerical methods were presented such as the first error bounds for BQ with deterministic point selection and Gaussian observation noise and BO with misspecified smoothness. In both instances the use of point picking strategies which produce quasi-uniform points, as opposed to specific hyperparameter selection methods, are of critical importance. 

We believe there are many more opportunities to combine GP and SDA methods. For example dealing with smoothness and likelihood misspecification when the approximating function is infinitely smooth, such as when a Gaussian kernel is used for approximation a common choice in practice. Additionally, analogies of the results in this paper for functions with vector valued output or structured output, such as additive functions, would be an important avenue of research and would follow naturally from the insights that SDA offers for such scenarios.


\acks{We would like to thank Toni Karvonen, Andrew Duncan, three anonymous reviewers and the editor for helpful comments when writing this paper. GW was supported by an EPSRC Industrial CASE award [EP/S513635/1] in partnership with Shell UK Ltd. FXB was supported by an Amazon Research Award on ``Transfer Learning for Numerical Integration in Expensive Machine Learning Systems''. MG was supported by the EPSRC grants [EP/T000414/1, EP/R018413/2, EP/P020720/2, EP/R034710/1, EP/R004889/1]. FXB and MG were also supported by the Lloyds Register Foundation Programme on Data-Centric Engineering and The Alan Turing Institute under the EPSRC grant [EP/N510129/1].}


\appendix


\section{The Design Region} \label{sec:assumptions_domain}
In this first appendix, we briefly recall common terminology from the literature on scattered data approximation which is used throughout the paper. 

A domain shall mean an open connected set in $\mathbb{R}^{d}$. A domain satisfies the $(R,\delta)$ interior cone condition if for $R > 0$ and angle $\delta\in (0,\pi /2)$ we have that $ \forall x\in\X$,  $\exists\:\xi(x)$ such that the cone 
\begin{align*}
  C\left(x,\xi(x),\delta,R\right) = \left\{x + \lambda y\::\: y\in\mathbb{R}^{d}, \norm{y}_{2} = 1, y^{\top}\xi(x)\geq \cos(\delta), \lambda\in [0,R]\right\},
\end{align*}
is contained in $\X$. An open set $\X_i\subseteq\mathbb{R}^{d}$ is called a special Lipschitz domain \citep[Page 181]{Stein1970} if there exists a rotation of $\X_i$, denoted by $\tilde{\X}_i$, and a function $\psi : \mathbb{R}^{d-1}\xrightarrow{}\mathbb{R}$ which satisfies the following
\begin{enumerate}
    \item $\tilde{\X}_i = \{(x,y)\in\mathbb{R}^{d}\:\: y > \psi(x)\}$,
    \item $\psi$ is a Lipschitz function such that $\lvert \psi(x) - \psi(x')\rvert\leq M\norm{x-x'}_{2}$  $\forall x,x'\in\mathbb{R}^{d-1}$ where $M > 0$.
\end{enumerate}

Consider a domain $\X \subseteq \mathbb{R}^d$ and denote its boundary by $\partial \X$. We say $\partial \X$ is a Lipschitz boundary \citep[Page 189]{Stein1970} if $\exists\: \varepsilon > 0$, $N\in\mathbb{N}$, $M > 0$, and open sets $U_{1}, U_{2},\ldots,U_{L}\subset\mathbb{R}^{d}$, where $L\in\mathbb{N}\cup\{\infty\}$, such that the following conditions are satisfied
\begin{enumerate}
    \item For any $x\in\partial\X$, there exists an index $i$ such that $B(x,\varepsilon)\subset U_{i}$,
    \item $U_{i_{1}}\cap\cdots\cap U_{i_{N+1}} = \emptyset$ for any distinct indices $\{i_{1},\ldots i_{N+1}\}$,
    \item For each index $i$ there exists a special Lipschitz domain $\X_{i}\subset\mathbb{R}^{d}$ with Lipschitz bound $b$ such that $b\leq M$ and $U_{i}\cap\X = U_{i}\cap\X_{i}$. 
\end{enumerate}
and we call any bounded domain satisfying the $(R,\delta)$ interior cone condition with a Lipschitz boundary a $\mathcal{L}(R,\delta)$-domain.

\section{Preliminary Results}

This section covers results to be used throughout the rest of the proofs, namely a sampling inequality, restriction and extension of functions in RKHS and the Pythagorian property.

Sampling inequalities \citep{Narcowich2006,Rieger2010,Arcangeli2011,Arcangeli2014} are powerful inequalities for functions in Sobolev spaces which facilitate the systemisation of approximation error bounds. The result below is a special case of the result by \citet[Theorem 3.2]{Arcangeli2011} where the integrability parameter in the right hand side Sobolev norm is set to two and so is the parameter $p$ of the $l_p$ norm. 

\begin{theorem}\label{thm:Arcangeli_SI}
	Let $\X$ be a $\mathcal{L}(R,\delta)$-domain, $\tau > d/2$ and $q\in[1,\infty]$. Then, $\exists C,h_{0} > 0$ such that $\forall X\subseteq\X$ with $h_{X}\leq h_{0}$, any $f\in W^{\tau}_{2}(\X)$ and any $s\in[0,\tau^{*}]$
	\begin{align*}
		\norm{f}_{W^{s}_{q}(\X)}\leq C\Lambda_{s,q}\left(h_{X}^{\tau - s - d\left(\frac{1}{2}-\frac{1}{q}\right)_{+}}\norm{f}_{W^{\tau}_{2}(\X)} + h_{X}^{\frac{d}{\gamma}-s}\norm{f_{X}}_{2}\right),	
	\end{align*}
	where $C= C(\X,d,\tau,q)$, $h_{0} = h_{0}(\delta, R,d,\tau)$ and $\gamma = 2\vee q$.
\end{theorem}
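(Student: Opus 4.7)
My plan is to follow the standard architecture for sampling inequalities: localize, approximate locally by polynomials, control those polynomials via their values at data points, and reassemble globally. Since $\tau>d/2$, the Sobolev embedding $W^\tau_2(\X)\hookrightarrow C(\X)$ makes point evaluations $f_X$ well defined, which is what allows the bound to even make sense.

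First I would set up the local geometry. Using the $(R,\delta)$ interior cone condition together with the Lipschitz boundary, I would build a finite family of star-shaped domains $\{D_i\}$ covering $\overline{\X}$, each of diameter comparable to $h_X$, each containing at least one point of $X$, and with uniformly bounded overlap. Standard Whitney-style arguments plus the cone condition give such a cover with geometric constants depending only on $R,\delta,d$; the threshold $h_0$ comes in here to make the construction go through. On each $D_i$ I would fix a ``representative'' design point $x_i^{\star}\in X\cap D_i$.

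The analytic core is a local estimate of Bramble--Hilbert / polynomial reproduction type. On each $D_i$, let $P_i f$ be the averaged Taylor polynomial of $f$ of degree $\lfloor\tau\rfloor-1$ (or its obvious analogue for integer $\tau$). The Bramble--Hilbert lemma for star-shaped domains gives, for integer-valued $s$,
\begin{align*}
\norm{f-P_i f}_{W^s_q(D_i)} \;\lesssim\; h_X^{\tau-s-d(\tfrac{1}{2}-\tfrac{1}{q})_+}\,\norm{f}_{W^\tau_2(D_i)},
\end{align*}
with the $d(\tfrac12-\tfrac1q)_+$ correction arising from the Sobolev embedding $W^\tau_2\hookrightarrow W^{\tau}_q$ that is needed when $q>2$ (the scaling in $h_X$ comes from a rescaling argument to a unit domain). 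For the polynomial $P_i f$ itself, I would exploit that $P_i f$ is a polynomial on a set of diameter $\sim h_X$ containing the point $x_i^\star$: by Markov's inequality for polynomials on star-shaped domains, all $W^s_q(D_i)$ norms of $P_i f$ are controlled by the pointwise value $|P_i f(x_i^\star)|$ and the appropriate power of $h_X$, namely $h_X^{d/\gamma-s}|P_i f(x_i^\star)|$ with $\gamma=2\vee q$. Writing $P_i f(x_i^\star)=f(x_i^\star)+[P_i f(x_i^\star)-f(x_i^\star)]$ and absorbing the bracketed term back into the Bramble--Hilbert error via Sobolev embedding (using $\tau>d/2$) produces a clean local inequality
\begin{align*}
\norm{f}_{W^s_q(D_i)} \;\lesssim\; h_X^{\tau-s-d(\tfrac12-\tfrac1q)_+}\norm{f}_{W^\tau_2(D_i)} \;+\; h_X^{d/\gamma-s}|f(x_i^\star)|.
\end{align*}

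Next I would assemble the global estimate. Raising to the $q$-th power (or taking suprema if $q=\infty$), summing over $i$, and using the bounded overlap and the obvious bound $\sum_i|f(x_i^\star)|^q\leq \norm{f_X}_{\ell_q}^q\lesssim\norm{f_X}_2^q$ when $q\leq 2$, and by Jensen/ $\ell_p$ inclusion otherwise, gives the theorem for integer $s\in[0,\tau^\star]$. The remaining, and I expect main, obstacle is the non-integer $s$ case together with the explicit $\Lambda_{s,q}$ prefactor; this I would handle by real interpolation between the integer values $\lfloor s\rfloor$ and $\lceil s\rceil$ of the bound just obtained. The K-functional interpolation constant for $W^s_q$ between $W^{\lfloor s\rfloor}_q$ and $W^{\lceil s\rceil}_q$ with parameter $\tilde s=s-\lfloor s\rfloor$ is precisely what produces the factor $\Lambda_{s,q}=(q\tilde s(1-\tilde s))^{1/q}$ (with the convention $\Lambda_{s,q}=1$ when $\tilde s=0$); tracking the interpolation identity carefully yields the stated form. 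Gathering constants, everything depends only on $\X,d,\tau,q$, and $h_0$ depends only on $R,\delta,d,\tau$, matching the claim.
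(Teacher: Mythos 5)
First, note that the paper does not actually prove this statement: it is imported verbatim as a special case of Theorem 3.2 of Arcangeli, L\'opez de Silanes and Torrens (2011), with the integrability parameter of the right-hand Sobolev norm and the exponent of the $\ell_p$ norm both set to $2$. Your proposal is therefore a reconstruction of the proof of the cited result, and the overall architecture you describe (cover by star-shaped subdomains of diameter comparable to $h_X$, local Bramble--Hilbert approximation, control of the local polynomial through data values, global assembly, interpolation in $s$) is indeed the standard one in that literature.

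There is, however, a genuine gap at the central step. You claim that Markov's inequality lets you control the norms of the local polynomial $P_i f$ on $D_i$ by its value at a \emph{single} representative point $x_i^{\star}$. This fails for any polynomial of degree at least one: $p(x) = x - x_i^{\star}$ vanishes at $x_i^{\star}$ yet is not small on $D_i$, and the Markov route is circular, since $\lvert P_i f(x) - P_i f(x_i^{\star})\rvert \leq \mathrm{diam}(D_i)\, \| \nabla P_i f \|_{L^{\infty}(D_i)} \lesssim \| P_i f\|_{L^{\infty}(D_i)}$ returns exactly the quantity you are trying to bound. What is actually required is a norming-set (unisolvency) argument: the subdomains must be taken of radius $c(\lfloor\tau\rfloor,\delta)\,h_X$ so that $X\cap D_i$ is a norming set for polynomials of degree $\lfloor\tau\rfloor - 1$ on $D_i$, giving $\|P_i f\|_{L^{\infty}(D_i)} \lesssim \max_{x_j \in X\cap D_i} \lvert P_i f(x_j)\rvert$, after which one writes $P_i f(x_j) = f(x_j) + (P_i f - f)(x_j)$ at \emph{every} such point. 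This is precisely where the threshold $h_0$ and its dependence on $\tau$ only through $\lfloor\tau\rfloor$ originate (a point the paper itself remarks on after the theorem), not merely the existence of the cover. Two smaller issues: your $\ell_q$-versus-$\ell_2$ comparison in the assembly is stated the wrong way round --- it is $q\geq 2$ that gives $\|f_X\|_{\ell_q} \leq \|f_X\|_{\ell_2}$ for free, while for $q<2$ one needs the cardinality bound $\# X \lesssim h_X^{-d}$, which is exactly what converts $h_X^{d/q}$ into $h_X^{d/2}$ and produces $\gamma = 2\vee q$; and the prefactor $\Lambda_{s,q}$ genuinely requires the interpolation-space bookkeeping you defer, since one must verify the constant remains controlled as $\tilde{s}\to 0$ or $\tilde{s}\to 1$.
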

Discussion of how the domain, smoothness of the function and point set affect the constants is provided by \citet{Arcangeli2011}. It is important to note that the dependence on $\tau$ in $h_{0}$ is only through $\lfloor\tau\rfloor$, this can be seen from inspection of the proof. The sampling inequality above is defined for norms over $\X$, but our proofs will be based on Fourier transforms which will be defined for functions over $\bbR^{d}$ therefore results facilitating the restriction and extension of functions between $\X$ and $\bbR^{d}$ are required. To this end the Sobolev extension theorem is required, stated below. 

\begin{theorem}\label{thm:Sobolev_Extension}
	Let $\mathcal{X}\subseteq\mathbb{R}^{d}$ be a bounded Lipschitz domain, $\tau\geq 0$ and $p\in[1,\infty)$. There exists an extension map $\mathcal{E}\colon W^{\tau}_{p}(\mathcal{X})\to W^{\tau}_{p}(\mathbb{R}^{d})$ such that $\forall f\in W^{\tau}_{p}(\mathcal{X})$ we have $\restr{\mathcal{E}f}{\mathcal{X}} = \restr{f}{\mathcal{X}}$ and $\norm{\mathcal{E}f}_{W^{\tau}_{p}(\mathbb{R}^{d})} \leq C\norm{f}_{W^{\tau}_{p}(\mathcal{X})}$ where $C = C(\mathcal{X},d,\tau,p) > 0$ is a constant independent of $f$.
\end{theorem}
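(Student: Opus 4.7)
The plan is to invoke the classical total extension theorem of \citet{Stein1970}; its construction naturally decomposes into three steps: localisation of the boundary, extension on a special Lipschitz domain, and gluing.

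First, since $\X$ is bounded with Lipschitz boundary (Section~\ref{sec:assumptions_domain}), the compact set $\overline{\X}$ admits a finite open cover $\{U_i\}_{i=0}^{L}$ in which $\overline{U_0}\subset\X$ is an interior patch, while each $U_i$ for $i\geq 1$ comes from the Lipschitz atlas with $U_i\cap\X = U_i\cap\X_i$ for a special Lipschitz domain $\X_i$ having graph $\psi_i$. Fix a smooth partition of unity $\{\phi_i\}_{i=0}^L$ subordinate to this cover.

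Second, on each special Lipschitz domain $\X_i$ I would build an extension operator $\mathcal{E}_i\colon W^{\tau}_p(\X_i)\to W^{\tau}_p(\mathbb{R}^d)$ by Stein's integral reflection. Pick a regularised distance $\Delta(x)$ that is smooth on $\mathbb{R}^d\setminus\overline{\X}_i$, comparable to $\mathrm{dist}(x,\partial\X_i)$, and obeys the derivative bounds $|D^\alpha \Delta(x)|\lesssim \Delta(x)^{1-|\alpha|}$; pick a continuous $\mu\colon[1,\infty)\to\mathbb{R}$ of rapid decay satisfying appropriate moment conditions up to order $\lceil\tau\rceil$. Writing $x=(x',x_d)$ in the coordinates that flatten $\psi_i$, one then sets
\begin{equation*}
	(\mathcal{E}_i f)(x',x_d) \;=\; \int_1^\infty f(x',\,\psi_i(x') + \lambda(\psi_i(x') - x_d))\,\mu(\lambda)\,d\lambda
\end{equation*}
for $x$ below the graph, and $\mathcal{E}_i f = f$ above it. The moment conditions force all weak derivatives up to order $\lceil\tau\rceil$ to match across the boundary, and a Minkowski-type estimate yields $\|\mathcal{E}_i f\|_{W^{\tau}_p(\mathbb{R}^d)} \leq c_i \|f\|_{W^{\tau}_p(\X_i)}$.

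Third, define
\begin{equation*}
	\mathcal{E}f \;=\; \widetilde{\phi_0 f} \;+\; \sum_{i=1}^{L}\widetilde{\phi_i\,\mathcal{E}_i(f|_{U_i\cap\X_i})},
\end{equation*}
where the tilde denotes extension by zero to all of $\mathbb{R}^d$ (well-defined by compact support of each summand). Leibniz's rule combined with the local bounds from the previous step produces the global estimate $\|\mathcal{E}f\|_{W^{\tau}_p(\mathbb{R}^d)}\leq C\|f\|_{W^{\tau}_p(\X)}$ with $C=C(\X,d,\tau,p)$. The main obstacle is step two for non-integer $\tau$: ordinary reflection formulas that suffice for integer Sobolev spaces do not interpolate cleanly to fractional orders over merely Lipschitz boundaries, and the regularity of $\Delta$ is delicate because $\partial\X_i$ itself is only Lipschitz. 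Stein's combination of a smooth regularised distance with a weight $\mu$ whose moments are prescribed is what makes the construction work uniformly in $\tau\geq 0$; the partition-of-unity and compactness arguments in steps one and three are routine by comparison.
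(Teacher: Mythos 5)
The paper does not actually prove this statement: it is the classical Stein total extension theorem, quoted as a preliminary (the definitions of special Lipschitz domains in Appendix A are lifted from \citet[Chapter VI]{Stein1970} precisely so that this theorem can be invoked as a black box). Your proposal therefore cannot be compared to an in-paper argument, but it does correctly identify the standard proof: localisation by a finite Lipschitz atlas and partition of unity, Stein's weighted reflection on each special Lipschitz domain, and gluing. Two points deserve attention. First, your displayed formula for $\mathcal{E}_i$ composes $f$ with $x\mapsto(x',\psi_i(x')+\lambda(\psi_i(x')-x_d))$, i.e.\ with the Lipschitz graph function itself. As written this fails for $\tau>1$: composition with a merely Lipschitz map does not preserve higher-order weak derivatives. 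The whole point of the regularised distance $\Delta$ you introduce in the same step is that it, and not $\psi_i$, must appear in the argument of $f$ (Stein's formula is $f(x',x_d+\lambda\Delta(x))$ with $\Delta$ smooth off $\overline{\X}_i$ and comparable to the distance to the boundary); you set up $\Delta$ correctly and then do not use it.

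Second, the fractional case is not settled by prescribing moments of $\mu$ ``up to order $\lceil\tau\rceil$'': that argument controls integer-order derivatives only. The clean way to finish, and the one consistent with how the paper defines $W^{\tau}_{2}(\X)$ for $\tau\notin\bbN$, is to observe that Stein's construction yields a \emph{single} operator $\mathcal{E}$ (independent of the order and of $p$, since $\mu$ can be chosen with all higher moments vanishing) that is simultaneously bounded from $W^{k}_{p}(\X)$ to $W^{k}_{p}(\bbR^{d})$ for every integer $k\geq 0$; because the fractional spaces are defined by interpolation between consecutive integer orders, boundedness for non-integer $\tau$ then follows by interpolation of operators. With the regularised distance restored in the reflection formula and the interpolation step made explicit, your outline is the standard and correct proof.
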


The Sobolev extension theorem is used by \citet[Corollary 10.48]{Wendland2005} to ensure that, along with some assumptions on $\X$ satisfied by Assumption \ref{assumption:domain}, if $\calH_{k}(\bbR^{d})$ is norm equivalent to $W^{\tau}_{2}(\bbR^{d})$ then $\calH_{k}(\X)$ is norm equivalent to $W^{\tau}_{2}(\X)$. Finally, the next two lemmas assure us that the minimal norm properties of the kernel interpolant and kernel regression function still hold along with the Pythagorean property for kernel interpolant. For a proof, see e.g.\ \citep[Corollary 10.25]{Wendland2005}.

\begin{lemma}
	Let $\mathcal{X}\subseteq\mathbb{R}^{d}$, $X\subseteq\mathcal{X}$ a finite subset, $k$ a kernel over $\bbR^{d}\times\bbR^{d}$ and $f\in \mathcal{H}_{k}(\mathcal{X})$ then
\begin{align*}
    \restr{R_{f}}{\mathcal{X}} &  = \argmin_{\substack{g\in \mathcal{H}_{k}(\mathcal{X}) \\ g_{X} = f_{X}}}\norm{g}_{\mathcal{H}_{k}(\mathcal{X})} \qquad
    \restr{R_{f,n\lambda,\varepsilon}}{\mathcal{X}}  = \argmin_{g\in \mathcal{H}_{k}(\mathcal{X})}S(g,\lambda,\mathcal{X}),
\end{align*}
where $S(g,\lambda,\mathcal{X})$ is the regularized least squares problem defined in Section \ref{sec:background}. 
\end{lemma}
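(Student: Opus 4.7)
The plan is to prove both equalities by exploiting the orthogonal decomposition of $\mathcal{H}_k(\mathcal{X})$ with respect to the finite-dimensional subspace $V = \mathrm{span}\{k(\cdot,x_1),\dots,k(\cdot,x_n)\}$. Since $V$ is finite-dimensional it is closed, so every $g\in\mathcal{H}_k(\mathcal{X})$ admits a unique decomposition $g = g_V + g_\perp$ with $g_V\in V$ and $g_\perp\in V^\perp$. The key observation is that by the reproducing property, $g(x_i) = \langle g, k(\cdot,x_i)\rangle_{\mathcal{H}_k(\mathcal{X})} = \langle g_V, k(\cdot,x_i)\rangle_{\mathcal{H}_k(\mathcal{X})} = g_V(x_i)$, because $g_\perp$ is orthogonal to each $k(\cdot,x_i)\in V$. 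Hence both the interpolation constraint $g_X = f_X$ and the least-squares term in $S(g,\lambda,\mathcal{X})$ depend on $g$ only through $g_V$.

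For the interpolation case, the Pythagorean identity gives $\|g\|_{\mathcal{H}_k(\mathcal{X})}^2 = \|g_V\|_{\mathcal{H}_k(\mathcal{X})}^2 + \|g_\perp\|_{\mathcal{H}_k(\mathcal{X})}^2$. Thus subject to the interpolation constraint, any admissible $g$ has norm at least $\|g_V\|_{\mathcal{H}_k(\mathcal{X})}$, with equality iff $g_\perp = 0$, so the minimiser lies in $V$. Writing $g_V = \sum_j \alpha_j k(\cdot,x_j)$, the constraint becomes the linear system $k_{XX}\alpha = f_X$. Assuming $k_{XX}$ is invertible (which is guaranteed whenever the points are distinct for the strictly positive definite kernels considered in the paper), we get $\alpha = k_{XX}^{-1}f_X$ and the minimiser evaluates to $k_{xX} k_{XX}^{-1} f_X = R_f(x)$, exactly the expression in \eqref{eq:def_approximation} with $m=0$, $\lambda=0$, $\varepsilon=0$. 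Restricting to $\mathcal{X}$ gives the first equality.

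For the regression case, the same decomposition yields
\begin{align*}
S(g,\lambda,\mathcal{X}) = \tfrac{1}{n}\sum_{i=1}^n (g_V(x_i)-y_i)^2 + \lambda\|g_V\|_{\mathcal{H}_k(\mathcal{X})}^2 + \lambda \|g_\perp\|_{\mathcal{H}_k(\mathcal{X})}^2,
\end{align*}
which is strictly decreased by setting $g_\perp = 0$ whenever $g_\perp\neq 0$; hence the minimiser again lies in $V$. Parameterising $g = \sum_j \alpha_j k(\cdot,x_j)$, the objective becomes $\tfrac{1}{n}\|k_{XX}\alpha - y\|_2^2 + \lambda\alpha^\top k_{XX}\alpha$, whose gradient in $\alpha$ vanishes iff $k_{XX}\bigl((k_{XX}+n\lambda I)\alpha - y\bigr) = 0$. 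Since $k_{XX} + n\lambda I$ is strictly positive definite for $\lambda>0$, the unique solution is $\alpha = (k_{XX}+n\lambda I)^{-1}y$, giving minimiser $k_{xX}(k_{XX}+n\lambda I)^{-1}y = R_{f,n\lambda,\varepsilon}(x)$ with $y = f_X + \varepsilon$ and $m=0$, which yields the second equality after restriction.

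The main subtlety I expect is handling the restriction: $\mathcal{H}_k(\mathcal{X})$ is defined via restrictions from $\mathcal{H}_k(\mathbb{R}^d)$, yet the inner product on the restricted space is the quotient norm. The orthogonal decomposition argument must therefore be carried out inside $\mathcal{H}_k(\mathcal{X})$ itself, which is legitimate since $V\subseteq\mathcal{H}_k(\mathcal{X})$ and the reproducing property holds in the restricted RKHS (being itself an RKHS with kernel $\restr{k}{\mathcal{X}\times\mathcal{X}}$, cf. \citep[Theorem 6]{Berlinet2004}). Once this is noted, the argument above is self-contained and does not require any extension-type constructions.
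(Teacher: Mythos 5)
Your proof is correct, but it takes a genuinely different route from the paper's. The paper does not re-derive the representer theorem: it takes the case $\X=\bbR^{d}$ as known (citing standard arguments) and then proves the restricted statement by contradiction, using only the quotient-norm structure of $\calH_{k}(\X)$ --- if some feasible $g\in\calH_{k}(\X)$ had strictly smaller norm than $\restr{R_{f}}{\X}$, the definition of the restriction norm as an infimum over extensions would produce an $h\in\calH_{k}(\bbR^{d})$ interpolating $f$ on $X$ with $\norm{h}_{\calH_{k}(\bbR^{d})}<\norm{R_{f}}_{\calH_{k}(\bbR^{d})}$, contradicting minimality on $\bbR^{d}$. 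You instead run the orthogonal-projection/representer argument entirely inside $\calH_{k}(\X)$, using that the restricted space is itself an RKHS with kernel $\restr{k}{\X\times\X}$, so the representers $\restr{k(\cdot,x_{i})}{\X}$ span a closed subspace with Gram matrix exactly $k_{XX}$ and the Pythagorean split applies there. Your version is more self-contained (it proves the $\bbR^{d}$ case along the way rather than citing it, and handles both the constrained and regularised problems explicitly, whereas the paper only writes out the first and declares the second analogous); the paper's version is shorter and sidesteps the explicit linear-algebra computation, at the cost of leaning on the cited $\bbR^{d}$ result and on the extension/infimum characterisation of the restricted norm. One small point: your step solving $k_{XX}\alpha=f_{X}$ needs $k_{XX}$ invertible, which the paper's definition of a kernel (mere positive semi-definiteness) does not guarantee; you flag this correctly, and the paper implicitly makes the same assumption since $R_{f}$ is defined via $k_{XX}^{-1}$, so this is not a gap relative to the paper.
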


\begin{proof}
 The case of $\X = \bbR^{d}$ is obtained by standard arguments \citep[Theorem 3.4, Theorem 3.5]{Kanagawa2018Review} so we restrict to the case when $\X$ is a strict subset of $\bbR^{d}$. We shall only prove the first statement since the second proof is analogous. The interpolant restricted to $\mathcal{X}$ equals $f$ on $X$ since $X\subseteq\mathcal{X}$ and by definition $\restr{R_{f}}{\mathcal{X}}\in \mathcal{H}_{k}(\mathcal{X})$, therefore 
\begin{align*}
	\bignorm{\restr{R_{f}}{\mathcal{X}}}_{\mathcal{H}_{k}(\mathcal{X})} \geq \min_{\substack{g\in \mathcal{H}_{k}(\mathcal{X}) \\ \restr{g}X = \restr{f}{X}}}\norm{g}_{\mathcal{H}_{k}(\mathcal{X})}.
\end{align*}
The rest of the proof will be done by contradiction. Suppose $\exists g\in \mathcal{H}_{k}(\mathcal{X})$ such that $\restr{g}{X} = \restr{f}{X}$ and $\norm{g}_{\mathcal{H}_{k}(\mathcal{X})} < \bignorm{\restr{R_{f}}{\mathcal{X}}}_{\mathcal{H}_{k}(\mathcal{X})}$. Then, by definition of the norm on $\mathcal{H}_{k}(\mathcal{X})$
\begin{align*}
	\norm{g}_{\mathcal{H}_{k}(\mathcal{X})} = \inf_{\substack{h\in \mathcal{H}_{k}(\bbR^d)\\ \restr{h}{\mathcal{X}} = \restr{g}{\mathcal{X}}}}\norm{h}_{\mathcal{H}_{k}(\bbR^d)} < \bignorm{\restr{R_{f}}{\mathcal{X}}}_{\mathcal{H}_{k}(\mathcal{X})} \leq \bignorm{R_{f}}_{\mathcal{H}_{k}(\bbR^d)}.
\end{align*}
By definition of the infimum, $\exists h\in \mathcal{H}_{k}$ such that $\restr{h}{\mathcal{X}} = \restr{g}{\mathcal{X}}$ and $\norm{h}_{\mathcal{H}_{k}(\bbR^d)} < \norm{R_{f}}_{\mathcal{H}_{k}(\bbR^d)}$. But $X\subseteq\mathcal{X}$ hence $h_{X} = g_{X} = f_{X}$ which contradicts norm minimality of $R_{f}$ over $\bbR^{d}$. This completes the proof.
\end{proof}

\begin{lemma}\label{lem:pythag}
	Let $\mathcal{X}\subseteq\mathbb{R}^{d}$, $k$ a kernel over $\X\times\X$ and $f\in \mathcal{H}_{k}(\mathcal{X})$ then we have the Pythagorean property for the interpolant: $\bignorm{f-R_{f}}_{\mathcal{H}_{k(\X)}}^{2} + \bignorm{R_{f}}_{\mathcal{H}_{k(\X)}}^{2} = \norm{f}_{\mathcal{H}_{k(\X)}}^{2}$.
\end{lemma}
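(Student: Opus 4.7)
The plan is to recognize $R_f$ as the orthogonal projection of $f$ onto the finite-dimensional subspace $V \coloneqq \mathrm{span}\{k(\cdot,x_i): x_i \in X\} \subseteq \mathcal{H}_k(\mathcal{X})$, then apply the ordinary Pythagorean identity in this Hilbert space. Since the excerpt has defined $\mathcal{H}_k(\mathcal{X})$ as an RKHS by restriction with kernel $k\rvert_{\mathcal{X}\times\mathcal{X}}$, the reproducing property $\langle g, k(\cdot,x)\rangle_{\mathcal{H}_k(\mathcal{X})} = g(x)$ is available for all $x \in \mathcal{X}$ and all $g \in \mathcal{H}_k(\mathcal{X})$.

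First I would note from the closed-form expression $R_f(x) = k_{xX} k_{XX}^{-1} f_X$ that $R_f = \sum_{i=1}^n \alpha_i\, k(\cdot, x_i)$ with $\alpha = k_{XX}^{-1} f_X$, so $R_f \in V \subseteq \mathcal{H}_k(\mathcal{X})$. Second, I would verify the interpolation property $R_f(x_j) = f(x_j)$ for each $x_j \in X$ by direct matrix computation: the $j$-th component of $k_{XX}\alpha = f_X$ gives exactly $R_f(x_j) = (k_{XX}\alpha)_j = f(x_j)$.

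The crucial step is to establish $\langle f - R_f,\, R_f\rangle_{\mathcal{H}_k(\mathcal{X})} = 0$. For each basis element of $V$ and using the reproducing property,
\begin{align*}
	\langle f - R_f,\, k(\cdot, x_i)\rangle_{\mathcal{H}_k(\mathcal{X})} = f(x_i) - R_f(x_i) = 0,
\end{align*}
since $R_f$ interpolates $f$ at $X$. By linearity of the inner product, $\langle f - R_f, g\rangle_{\mathcal{H}_k(\mathcal{X})} = 0$ for every $g \in V$; in particular, taking $g = R_f$ yields the orthogonality we need.

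Finally I would expand
\begin{align*}
	\|f\|_{\mathcal{H}_k(\mathcal{X})}^2 = \|(f - R_f) + R_f\|_{\mathcal{H}_k(\mathcal{X})}^2 = \|f - R_f\|_{\mathcal{H}_k(\mathcal{X})}^2 + 2\langle f - R_f, R_f\rangle_{\mathcal{H}_k(\mathcal{X})} + \|R_f\|_{\mathcal{H}_k(\mathcal{X})}^2,
\end{align*}
and the middle term vanishes by the orthogonality just established, giving the claimed identity. There is no real obstacle here — the only mild subtlety is making sure the reproducing property is being applied in $\mathcal{H}_k(\mathcal{X})$ (the restricted RKHS) rather than in $\mathcal{H}_k(\mathbb{R}^d)$, but this is guaranteed by the setup in Section~\ref{sec:background} together with Assumption~\ref{assumption:domain} ensuring $\mathcal{H}_k(\mathcal{X})$ is itself an RKHS with reproducing kernel $k\rvert_{\mathcal{X}\times\mathcal{X}}$.
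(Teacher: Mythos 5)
Your proof is correct: the paper does not prove this lemma itself but defers to \citet[Corollary 10.25]{Wendland2005}, and your argument --- identifying $R_{f}$ as the orthogonal projection of $f$ onto $\mathrm{span}\{k(\cdot,x_{i})\}$ via the reproducing property and then expanding the norm --- is exactly the standard proof underlying that citation. The only remark worth making is that the worry about restriction at the end is unnecessary here, since the lemma hypothesises $k$ as a kernel on $\X\times\X$ directly, so $\mathcal{H}_{k}(\X)$ carries the reproducing property by definition.
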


\section{Proof of Theorem \ref{thm:Misspecified_Sobolev_Interpolation}}
A key intermediate result is a slight generalisation of the sampling inequality by \citet[Theorem 4.2]{Narcowich2006} which facilitates bounds for the misspecified smoothness scenario. 

\begin{theorem}\label{thm:misspecified_inequality}
	Suppose $\X$ is a $\mathcal{L}(R,\delta)$-domain and $k$ is $\gamma$-smooth for $\gamma > d/2$. Then, $\exists C,h_{0} > 0$ such that $\forall X\subseteq\X$ with $h_{X}\leq h_{0}$, we have $\forall f\in W^{\tau}_{2}(\X)\:\forall \mu\in[0,\tau]$
	\begin{align*}
		\bignorm{f-R_{f}}_{W^{\mu}_{2}(\X)} \leq Ch_{X}^{\tau-\mu}\rho_{X}^{\gamma-\tau}\norm{f}_{W^{\tau}_{2}(\X)},
	\end{align*}
	where $C= C(\X,d,\tau)$, $h_{0} = h_{0}(\delta, R,d,\tau)$.
\end{theorem}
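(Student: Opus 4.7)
The plan is to apply the sampling inequality of Theorem \ref{thm:Arcangeli_SI} to the interpolation error $e = f - R_f$ (which vanishes on $X$) to peel off the $h_X^{\tau-\mu}$ factor, and then to control the residual $W^{\tau}_{2}$-norm of the error via the minimum-norm property of the kernel interpolant, using a band-limited native-space competitor tailored to the separation radius of $X$.

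\emph{Step 1 (reduction via the sampling inequality).} Since $R_{f}$ interpolates $f$ on $X$, $e|_{X}=0$. Norm equivalence together with $\gamma \geq \tau > d/2$ gives $R_{f} \in \calH_{k}(\X) \cong W^{\gamma}_{2}(\X) \hookrightarrow W^{\tau}_{2}(\X)$, so $e \in W^{\tau}_{2}(\X)$. Applying Theorem \ref{thm:Arcangeli_SI} with integrability $q=2$, source smoothness $\tau$, and target smoothness $\mu \in [0,\tau]$ gives
\[
\norm{e}_{W^{\mu}_{2}(\X)} \leq C \, \Lambda_{\mu, 2} \, h_{X}^{\tau - \mu} \, \norm{e}_{W^{\tau}_{2}(\X)},
\]
the boundary term in $\norm{e_{X}}_{2}$ being zero. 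The theorem therefore follows once the stability estimate $\norm{e}_{W^{\tau}_{2}(\X)} \leq C \rho_{X}^{\gamma-\tau} \norm{f}_{W^{\tau}_{2}(\X)}$ is established; by the triangle inequality and $\rho_{X} \geq 1$, it suffices to prove $\norm{R_{f}}_{W^{\tau}_{2}(\X)} \leq C \rho_{X}^{\gamma-\tau} \norm{f}_{W^{\tau}_{2}(\X)}$.

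\emph{Step 2 (minimum-norm competitor).} By Lemma \ref{lem:pythag}, $R_{f}$ has the smallest $\calH_{k}(\X)$-norm among all native-space interpolants of $f$ on $X$. Combined with norm equivalence and the embedding $W^{\gamma}_{2}(\X) \hookrightarrow W^{\tau}_{2}(\X)$,
\[
\norm{R_{f}}_{W^{\tau}_{2}(\X)} \leq C \norm{R_{f}}_{\calH_{k}(\X)} \leq C \norm{g}_{\calH_{k}(\X)}
\]
for any $g \in \calH_{k}(\X)$ with $g|_{X} = f|_{X}$. The task is reduced to exhibiting such a $g$ with $\norm{g}_{\calH_{k}(\X)} \leq C \rho_{X}^{\gamma-\tau} \norm{f}_{W^{\tau}_{2}(\X)}$. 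Extend $f$ to $\bar f \in W^{\tau}_{2}(\bbR^{d})$ using Theorem \ref{thm:Sobolev_Extension}. For a cutoff $\sigma > 0$, form a Fourier truncation $f_{\sigma}$ of $\bar f$ and then add a band-limited corrector supported in $B(0,\sigma)$ chosen so that $g_{\sigma}|_{X} = \bar f|_{X}$. Because $g_{\sigma}$ is band-limited, a Bernstein-type inequality yields $\norm{g_{\sigma}}_{\calH_{k}(\bbR^{d})} \leq C \sigma^{\gamma - \tau} \norm{g_{\sigma}}_{W^{\tau}_{2}(\bbR^{d})}$, while a Marcinkiewicz--Zygmund inequality on band-limited functions sampled on $X$ controls the corrector's $W^{\tau}_{2}$-norm by $\norm{\bar f}_{W^{\tau}_{2}}$ up to factors involving $q_{X}$ and $\sigma$. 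Matching the Nyquist-type scale $\sigma \sim 1/h_{X}$ and using $q_{X} = h_{X}/\rho_{X}$ balances the spectral truncation error against the interpolation correction cost and produces $\norm{g_{\sigma}}_{\calH_{k}(\X)} \leq C \rho_{X}^{\gamma - \tau} \norm{f}_{W^{\tau}_{2}(\X)}$, as required.

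The principal obstacle is obtaining the sharp $\rho_{X}^{\gamma-\tau}$ factor in the construction of the competitor, rather than the naive $q_{X}^{-(\gamma-\tau)}$ one would read off from the representer formula $\norm{R_{g}}_{\calH_{k}}^{2} = g_{X}^{\top} k_{XX}^{-1} g_{X}$ using the classical lower bound $\lambda_{\min}(k_{XX}) \geq C q_{X}^{2\gamma - d}$. Recovering the missing $h_{X}^{\gamma - \tau}$ requires that the band-limit $\sigma$ be adapted to the fill distance rather than to $q_{X}$, so that only the \emph{relative} density $h_{X}/q_{X}$ enters the final estimate; this is exactly where the Marcinkiewicz--Zygmund-type quadrature bound for band-limited functions on scattered point sets does the work, and it is the technical heart of the argument.
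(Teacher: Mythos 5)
Your Step 1 is fine, but Step 2 has a gap that cannot be repaired along the lines you propose. You reduce the theorem to the stability estimate $\norm{R_{f}}_{W^{\tau}_{2}(\X)} \leq C\rho_{X}^{\gamma-\tau}\norm{f}_{W^{\tau}_{2}(\X)}$ and then try to prove it through the chain $\norm{R_{f}}_{W^{\tau}_{2}(\X)} \leq C\norm{R_{f}}_{\calH_{k}(\X)} \leq C\norm{g}_{\calH_{k}(\X)}$ for a band-limited competitor $g$. This route is blocked. By the minimum-norm property, every interpolant $g$ of $f$ on $X$ satisfies $\norm{g}_{\calH_{k}(\X)} \geq \norm{R_{f}}_{\calH_{k}(\X)}$, and when $f \in W^{\tau}_{2}(\X)\setminus W^{\gamma}_{2}(\X)$ the quantity $\norm{R_{f}}_{\calH_{k}(\X)}$ necessarily diverges as $h_{X}\to 0$ (a uniformly bounded sequence of minimum-norm interpolants would have a weak limit in $\calH_{k}(\X)$ agreeing with $f$ everywhere, contradicting $f\notin\calH_{k}(\X)$). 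For quasi-uniform points $\rho_{X}^{\gamma-\tau}\norm{f}_{W^{\tau}_{2}(\X)}$ stays bounded, so no competitor with $\norm{g}_{\calH_{k}(\X)} \leq C\rho_{X}^{\gamma-\tau}\norm{f}_{W^{\tau}_{2}(\X)}$ can exist: the embedding $W^{\gamma}_{2}(\X)\hookrightarrow W^{\tau}_{2}(\X)$ in your first inequality throws away exactly the factor you need. Your proposed fix of taking the band-limit $\sigma \sim 1/h_{X}$ is also not viable, since interpolating arbitrary data at points of separation $q_{X}$ by functions band-limited to $1/h_{X} \leq 1/q_{X}$ with controlled norm fails precisely when $\rho_{X}$ is large, i.e., in the regime where the improvement over $q_{X}^{-(\gamma-\tau)}$ matters.

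The factor $h_{X}^{\gamma-\tau}$ that converts $q_{X}^{-(\gamma-\tau)}$ into $\rho_{X}^{\gamma-\tau}$ has to come from applying the sampling inequality at smoothness level $\gamma$ rather than $\tau$, and this is what the paper's proof (which simply invokes the argument of \citet[Theorem 4.2]{Narcowich2006}, with Theorem \ref{thm:Arcangeli_SI} replacing their Lemma 4.1 and Theorem \ref{thm:Sobolev_Extension} replacing compactness) does. One splits $f - R_{f} = (f - f_{\sigma}) + (f_{\sigma} - R_{f_{\sigma}})$, where $f_{\sigma}$ is the band-limited interpolant of $f$ on $X$ with bandwidth $\sigma \sim 1/q_{X}$, satisfying $\norm{f_{\sigma}}_{W^{\gamma}_{2}(\X)} \leq Cq_{X}^{-(\gamma-\tau)}\norm{f}_{W^{\tau}_{2}(\X)}$ and $\norm{f - f_{\sigma}}_{W^{\mu}_{2}(\X)} \leq Cq_{X}^{\tau-\mu}\norm{f}_{W^{\tau}_{2}(\X)} \leq Ch_{X}^{\tau-\mu}\norm{f}_{W^{\tau}_{2}(\X)}$. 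The second piece lies in $W^{\gamma}_{2}(\X)$ and vanishes on $X$, so Theorem \ref{thm:Arcangeli_SI} at level $\gamma$ together with Lemma \ref{lem:pythag} gives $\norm{f_{\sigma}-R_{f_{\sigma}}}_{W^{\mu}_{2}(\X)} \leq Ch_{X}^{\gamma-\mu}\norm{f_{\sigma}}_{W^{\gamma}_{2}(\X)} \leq Ch_{X}^{\gamma-\mu}q_{X}^{-(\gamma-\tau)}\norm{f}_{W^{\tau}_{2}(\X)} = Ch_{X}^{\tau-\mu}\rho_{X}^{\gamma-\tau}\norm{f}_{W^{\tau}_{2}(\X)}$. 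The stability estimate you wanted in Step 2 is then a corollary of the error bound (take $\mu=\tau$ and apply the triangle inequality), not an ingredient of it; your proposal has the logical order reversed.
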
 
\begin{proof}
The proof is identical to the proof by \citet[Theorem 4.2]{Narcowich2006}, but with different assumptions on $\gamma$ and $\X$. Specifically the proof by \citet[Theorem 4.2]{Narcowich2006} uses the result by \citet[Lemma 4.1]{Narcowich2006} for which a strictly smaller range of $\gamma$ is permitted. However Theorem \ref{thm:Arcangeli_SI} generalises the older bound by \citet[Lemma 4.1]{Narcowich2006} and simply requires $\gamma > d/2$. Additionally compactness of $\X$ was assumed by \citet[Theorem 4.2]{Narcowich2006} to use a version of the Sobolev extension theorem but Theorem \ref{thm:Sobolev_Extension} can instead be used to obtain the same conclusion for $\mathcal{L}(R,\delta)$-domains. 
\end{proof}

We begin by expressing the error for the interpolant $R_{f}^{m}(\theta_{n})$ in terms of two zero-mean GP interpolation problems
\begin{align}
	\bignorm{f-R_{f}^{m}(\theta_{n})}_{W^{s}_{q}(\X)} 
	& = \bignorm{f-R_{f}(\theta_{n}) - m(\theta_{n}) +R_{m(\theta_{n})}(\theta_{n})}_{W^{s}_{q}(\X)} \nonumber\\
	&\leq \bignorm{f-R_{f}(\theta_{n})}_{W^{s}_{q}(\X)} + \bignorm{m(\theta_{n})-R_{m(\theta_{n})}(\theta_{n})}_{W^{s}_{q}(\X)}. \label{eq:start_interpolation_thm}
\end{align}
The equality follows by the definition in \eqref{eq:def_approximation} and the inequality is the triangle inequality. Therefore zero-mean GP interpolation problems only needs to be dealt with. An upper-bound on the first term naturally leads to an upper bound on the second since Assumption \ref{assumption:mean} imposes that $m(\theta_{n})$ is at least as smooth as the target function. For $n\geq N$ and $s\in \left[0,(\tau_{f}\wedge\tau^{-}_{N})^{*}\right]$, applying Theorem \ref{thm:Arcangeli_SI} to the function $f-R_{f}(\theta_{n})$ over all smoothness levels $\{\tau_{f}\wedge\tau(\theta_{n})\}_{n\geq N}$ yields
\begin{align}\label{SI_on_difference_interpolant}
	\bignorm{f-R_{f}(\theta_{n})}_{W^{s}_{q}(\X)}
	& \leq C_{1}\Lambda_{s,q}h_{X_{n}}^{\left(\tau_{f}\wedge\tau(\theta_{n})\right)-s-d\left(\frac{1}{2}-\frac{1}{q}\right)_{+}}\bignorm{f-R_{f}(\theta_{n})}_{W^{\tau_{f}\wedge\tau(\theta_{n})}_{2}(\X)},
\end{align}
for $h_{X_{n}}\leq h_{1}$ where $C_{1} = C_{1}(\X,d,\tau_{f},q,\Theta_{N}^{*})$ and $h_{1} = h_{1}(R,\delta,d,\tau_{f},\Theta_{N}^{*})$ are respectively the supremum and infimum over $n\geq N$ of the constants obtained from applying Theorem \ref{thm:Arcangeli_SI} with smoothness parameter $\tau_{f}\wedge\tau(\theta_{n})$. Due to Assumption \ref{assumption:smoothness}, $\tau_{f}\wedge\tau(\theta)$ takes finitely many values so the infimum and supremum are over a finite number of values. This immediately gives $C_{1} < \infty$ and $h_{1} > 0$ and the same logic will be employed whenever Theorem \ref{thm:Arcangeli_SI} is used again.  The residual terms are zero since $R_{f}(\theta_{n})$ interpolates $f$ at the observation points. 

For the case $\tau_{f} \geq \tau(\theta_{n})$, the target function $f$ is in the RKHS of $k(\theta_{n})$ so we can derive the following inequality
\begin{align}
 	\bignorm{f-R_{f}(\theta_{n})}_{W^{\tau_{f}\wedge\tau(\theta_{n})}_{2}(\X)} 
 	& = \bignorm{f-R_{f}(\theta_{n})}_{W^{\tau(\theta_{n})}_{2}(\X)} \nonumber\\
 	 & \leq C_{u}(\theta_{n})\norm{f-R_{f}(\theta_{n})}_{\mathcal{H}_{k(\theta_{n})}(\X)} \label{eq:larger_tau_result_1} \\
 	&\leq C_{u}(\theta_{n})\norm{f}_{\mathcal{H}_{k(\theta_{n})}(\X)} \label{eq:larger_tau_result_2}\\
 	& \leq C_{u}(\theta_{n})C_{l}(\theta_{n})^{-1}\norm{f}_{W^{\tau(\theta_{n})}_{2}(\X)} \label{eq:larger_tau_result_3}\\
 	& \leq C_{N}\norm{f}_{W^{\tau_{f}}_{2}(\X)}\label{eq:larger_tau_result_4}. 
\end{align}

The inequalities in \eqref{eq:larger_tau_result_1} and \eqref{eq:larger_tau_result_3} follow from the norm equivalence between the RKHSs and Sobolev spaces with constants given in \eqref{eq:norm_equivalence_constants}. The inequality in \eqref{eq:larger_tau_result_2} is due to the Pythagorean property in Lemma \ref{lem:pythag}, \eqref{eq:larger_tau_result_4} is obtained by upper bounding by the largest constants over all values of $\{\theta_n\}_{n \geq N}$, which can be done by Assumption \ref{assumption:parameters}, and the fact that the $\norm{\cdot}_{W^{\tau_{f}}_{2}(\X)}$ norm which is larger than the $\norm{\cdot}_{W^{\tau(\theta_{n})}_{2}(\X)}$ norm since we are currently dealing with the case $\tau_{f}\geq\tau(\theta_{n})$. 

For the case $\tau(\theta_{n}) > \tau_{f}$, setting $\gamma = \tau(\theta_{n})$ and $\mu = \tau_{f}$ in Theorem \ref{thm:misspecified_inequality} gives
\begin{align}\label{eq:smaller_tau}
\bignorm{f-R_{f}(\theta_{n})}_{W^{\tau_{f}}_{2}(\X)} \leq C_{2}\rho_{X_{n}}^{\tau(\theta_{n})-\tau_{f}}\norm{f}_{W^{\tau_{f}}_{2}(\X)},
\end{align} 
for $h_{X}\leq h_{2}$ where $C_{2} = C_{2}(\X,d,\tau_{f},\Theta_{N}^{*})$ and $h_{2} = h_{2}(R,\delta,d,\tau_{f},\Theta_{N}^{*})$. By the same reasoning as the discussion after \eqref{SI_on_difference_interpolant} $h_{2} > 0$ and $C_{2} < \infty$. Now combine \eqref{SI_on_difference_interpolant}, \eqref{eq:larger_tau_result_4} and \eqref{eq:smaller_tau}
\begin{align}
& \bignorm{f-R_{f}(\theta_{n})}_{W^{s}_{q}(\X)} \nonumber \\
& \qquad \leq 
\begin{cases}
C_{1}C_{N} \Lambda_{s,q} h_{X}^{(\tau_{f}\wedge\tau(\theta_{n}))-s-d\left(\frac{1}{2}-\frac{1}{q}\right)_{+}}  \norm{f}_{W^{\tau_{f}}_{2}(\X)}& \text{ if } \tau_{f} < \tau(\theta_{n}) \\
C_{1}C_{2} \Lambda_{s,q} h_{X}^{(\tau_{f}\wedge\tau(\theta_{n}))-s-d\left(\frac{1}{2}-\frac{1}{q}\right)_{+}}\rho_{X}^{\tau(\theta_{n})-\tau_{f}} \norm{f}_{W^{\tau_{f}}_{2}(\X)}& \text{ if } \tau_{f} \geq \tau(\theta_{n})
\end{cases} \nonumber \\
& \qquad \leq C_3 \Lambda_{s,q} h_{X}^{(\tau_{f}\wedge\tau_{k}^{-})-s-d\left(\frac{1}{2}-\frac{1}{q}\right)_{+}}\rho_{X}^{(\tau_{k}^{+}-\tau_{f})_+} \norm{f}_{W^{\tau_{f}}_{2}(\X)}, \label{eq:sup_n_interpolant}
\end{align}
where the inequality in \eqref{eq:sup_n_interpolant} is obtained by taking the largest bound over parameter values $\{\theta_n\}_{n \geq N}$ and $C_3 = \max(C_1 C_{N}, C_1 C_2)$. To conclude the proof apply the upper bound in \eqref{eq:sup_n_interpolant} to each term of \eqref{eq:start_interpolation_thm} then set $C_{0}$ to be two times the maximum of the constants for each term and $h_{0}$ the minimum of the fill distance constants related to each term.

\section{Proof of Theorem \ref{thm:Misspecified_Sobolev_Det_Regression}}\label{appendix:deterministic_noise}

To obtain a bound in the scenario of corrupted data, we cannot use Theorem \ref{thm:misspecified_inequality} or Lemma \ref{lem:pythag} since they only apply to interpolants. Instead, Theorem \ref{thm:Misspecified_Sobolev_Det_Regression} will follow from Theorem \ref{thm:Misspecified_Sobolev_corruption} and Lemma \ref{lem:Regression_Best_Approximation} along with the band-limited function techniques pioneered by \citet{Narcowich2006}.

\begin{lemma}\label{lem:Regression_Best_Approximation}
Let $k$ be a kernel on $\X\times\X$, $f\in \mathcal{H}_{k}(\X)$, $\sigma > 0$ and assume observations $y_{i} = f(x_{i}) + \varepsilon_{i}$ at $X= \{x_{i}\}_{i=1}^{n}$ for some $\varepsilon\in\mathbb{R}^{n}$ then
\begin{align*}
	\bignorm{R_{f,\sigma^{2},\varepsilon}}_{\mathcal{H}_{k}(\X)} & \leq \left(\sigma^{-2}\norm{\varepsilon}^{2}_{2} + \norm{f}_{\mathcal{H}_{k}(\X)}^{2}\right)^{\frac{1}{2}} \\
	\bignorm{(f-R_{f,\sigma^2,\varepsilon})_{X}}_{2} & \leq\norm{\varepsilon}_{2} + \left(\norm{\varepsilon}^{2}_{2} + \sigma^2 \norm{f}_{\mathcal{H}_{k}(\X)}^{2}\right)^{\frac{1}{2}}.
\end{align*}
\end{lemma}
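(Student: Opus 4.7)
The plan is to exploit the variational characterisation of kernel ridge regression recalled in Section~\ref{sec:background}. Since $R_{f,\sigma^{2},\varepsilon}$ coincides with the minimiser of the regularised least-squares objective with regularisation parameter $\sigma^{2}$, it is the argmin over $g\in\mathcal{H}_{k}(\X)$ of
\begin{align*}
    J(g) \;\coloneqq\; \bignorm{g_{X}-y}_{2}^{2} + \sigma^{2}\bignorm{g}_{\mathcal{H}_{k}(\X)}^{2},
\end{align*}
where $y = f_{X}+\varepsilon$. Substituting the competitor $g=f$ (which lies in $\mathcal{H}_{k}(\X)$ by assumption) and observing that $f_{X}-y = -\varepsilon$ yields $J(f)=\bignorm{\varepsilon}_{2}^{2}+\sigma^{2}\bignorm{f}_{\mathcal{H}_{k}(\X)}^{2}$, so optimality gives the master inequality $J(R_{f,\sigma^{2},\varepsilon}) \leq \bignorm{\varepsilon}_{2}^{2}+\sigma^{2}\bignorm{f}_{\mathcal{H}_{k}(\X)}^{2}$.

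From this single inequality I would extract the two claims in turn by discarding one of the non-negative summands on the left-hand side. For the first bound, dropping the data-fidelity term $\bignorm{R_{X}-y}_{2}^{2}$, dividing by $\sigma^{2}$ and taking a square root yields $\bignorm{R_{f,\sigma^{2},\varepsilon}}_{\mathcal{H}_{k}(\X)}^{2} \leq \sigma^{-2}\bignorm{\varepsilon}_{2}^{2}+\bignorm{f}_{\mathcal{H}_{k}(\X)}^{2}$, which is exactly the first inequality after taking a square root. For the second bound, drop instead the regularisation term $\sigma^{2}\bignorm{R_{f,\sigma^{2},\varepsilon}}_{\mathcal{H}_{k}(\X)}^{2}$ to obtain $\bignorm{R_{X}-y}_{2} \leq (\bignorm{\varepsilon}_{2}^{2}+\sigma^{2}\bignorm{f}_{\mathcal{H}_{k}(\X)}^{2})^{1/2}$. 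Then insert $y=f_{X}+\varepsilon$ and apply the triangle inequality:
\begin{align*}
    \bignorm{(f-R_{f,\sigma^{2},\varepsilon})_{X}}_{2} \;=\; \bignorm{y-\varepsilon-R_{X}}_{2} \;\leq\; \bignorm{\varepsilon}_{2} + \bignorm{R_{X}-y}_{2},
\end{align*}
which matches the stated claim.

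There is no genuine obstacle here: once the variational characterisation is invoked, the proof is a two-line convex-optimisation argument and none of the scattered data approximation machinery (sampling inequalities, extension theorems, Pythagorean identity) is needed. The only minor subtlety is a bookkeeping one: the regression problem in Section~\ref{sec:background} is written with the $\frac{1}{n}\sum(g(x_{i})-y_{i})^{2}+\lambda_{n}\bignorm{g}_{\mathcal{H}_{k}(\X)}^{2}$ normalisation, so the parameter identification $\sigma^{2} = n\lambda_{n}$ must be tracked to arrive at the scale-free objective $J$ above. After that, both inequalities fall out immediately from the fact that $f$ is a valid but generally sub-optimal competitor.
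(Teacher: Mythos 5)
Your proposal is correct and is essentially identical to the paper's own proof: both arguments use the variational characterisation of $R_{f,\sigma^{2},\varepsilon}$ as the minimiser of the regularised least-squares objective, compare against the competitor $g=f$ to obtain the master bound $\norm{\varepsilon}_{2}^{2}+\sigma^{2}\norm{f}_{\mathcal{H}_{k}(\X)}^{2}$, drop one non-negative summand for each of the two claims, and finish the second with the same triangle inequality. The parameter bookkeeping $\sigma^{2}=n\lambda_{n}$ that you flag is exactly how the paper writes it, via $nS(\cdot,\sigma^{2}n^{-1},\X)$.
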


\begin{proof}
By triangle inequality 
\begin{align*}
	\bignorm{(f-R_{f,\sigma^2,\varepsilon})_{X}}_{2} = \bignorm{(y-\varepsilon-R_{f,\sigma^2,\varepsilon})_{X}}_{2} \leq \bignorm{(y-R_{f,\sigma^2,\varepsilon})_{X}}_{2} + \norm{\varepsilon}_{2}. 
\end{align*}
Combining this with  the inequality below completes the proof
\begin{align}
	& \max\left(\bignorm{\left(y-R_{f,\sigma^2,\varepsilon}\right)_{X}}^{2}_{2}
	 \; , \; \sigma^2 \bignorm{R_{f,\sigma^2,\varepsilon}}_{\mathcal{H}_{k}(\X)}^{2}\right)\nonumber \\
	& \qquad \leq n S(R_{f,\sigma^2,\varepsilon},\sigma^2 n^{-1}, \mathcal{X}) \label{eq:proof_residual_bound_1}\\
	& \qquad \leq n S(f,\sigma^2 n^{-1}, \mathcal{X}) 
	 = \norm{\varepsilon}^{2}_{2} + \sigma^2\norm{f}_{\mathcal{H}_{k}(\X)}^{2}. \label{eq:proof_residual_bound_2}
\end{align}
Where \eqref{eq:proof_residual_bound_1} uses the definition of the optimisation problem $S$, see Section \ref{sec:background}, and \eqref{eq:proof_residual_bound_2} follows since $R_{f,\sigma^2,\varepsilon}$ solves the optimisation problem $S$.
\end{proof}

\begin{theorem}\label{thm:Misspecified_Sobolev_corruption}Fix $N\in\bbN$ suppose Assumptions 1-\ref{assumption:double_SI_application} hold and each observation is corrupted by some $\varepsilon_{i}$ and let $q\in[1,\infty]$. Then, $\exists\:C,h_{0} > 0$ such that $\forall n\geq N$,  $\forall X_{n}\subseteq\X$ with $h_{X_{n}}\leq h_{0}$ and $\forall s\in[0,(\tau_{f}\wedge\tau^{-}_{k})^{*}]$ the approximation error is bounded as
	\begin{align*}
		&\mathbb{E}\left[\bignorm{f -R_{f,\sigma^2_n,\varepsilon}^{m}(\theta_{n})}_{W^{s}_{q}(\X)}\right] \\
		& \qquad \leq \; C\Lambda_{s,q}\bigg[h_{X_{n}}^{\left(\tau_{f}\wedge\tau^{-}_{k}\right)-s-d\left(\frac{1}{2}-\frac{1}{q}\right)_{+}}\rho_{X_{n}}^{(\tau^{+}_{k} - \tau_{f})_{+}}\left(\norm{f}_{W^{\tau_{f}}_{2}(\X)} + \bignorm{m(\theta_{n})}_{W^{\tau_{f}}_{2}(\X)}\right) \\
		& \qquad  \qquad \qquad \qquad+ h_{X_{n}}^{\tau^{-}_{k} -s-d \left(\frac{1}{2}-\frac{1}{q}\right)_{+}}\sigma_n^{-1}\bbE[\norm{\varepsilon}_{2}] \\
	& \qquad \qquad  \qquad \qquad + h_{X_{n}}^{\frac{d}{\gamma}-s}\bbE\left[\bignorm{(f-R_{f,\sigma^2_n,\varepsilon}(\theta_{n}))_{X_{n}}}_{2}\right]\\
	&\qquad \qquad  \qquad \qquad + h_{X_{n}}^{\frac{d}{\gamma}-s}\bbE\left[\bignorm{(m(\theta_{n})-R_{m(\theta_{n}),\sigma^2_n,\varepsilon}(\theta_{n}))_{X_{n}}}_{2}\right]\bigg],
	\end{align*}
	where $C = C\left(\X,d,q,\tau_{f},\Theta_{N}^{*}\right)$, $h_{0} = h_{0}\left(R,\delta,d,\tau_{f},\Theta_{N}^{*}\right)$, $\gamma = 2\vee q$.
\end{theorem}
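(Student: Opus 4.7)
The plan is to reduce the error to three zero-mean regression errors by linearity, apply Theorem~\ref{thm:Arcangeli_SI} to each, and then handle the resulting intermediate Sobolev norms by combining the RKHS analysis of Lemma~\ref{lem:Regression_Best_Approximation} in the well-specified smoothness case with a band-limited function construction in the misspecified case. Writing $R^m_{f,\sigma_n^2,\varepsilon}(\theta_n) = m(\theta_n) + R_{f-m(\theta_n),\sigma_n^2,\varepsilon}(\theta_n)$ from \eqref{eq:def_approximation} and using the fact that the zero-mean map $g\mapsto R_{g,\sigma_n^2,\varepsilon}(\theta_n)$ is linear in the combination $g_{X_n}+\varepsilon$, one obtains
\begin{align*}
f - R^m_{f,\sigma_n^2,\varepsilon}(\theta_n) = \bigl(f - R_{f,\sigma_n^2,\varepsilon}(\theta_n)\bigr) - \bigl(m(\theta_n) - R_{m(\theta_n),\sigma_n^2,\varepsilon}(\theta_n)\bigr) - R_{0,\sigma_n^2,\varepsilon}(\theta_n),
\end{align*}
so a triangle inequality reduces the $W^s_q(\X)$ bound to three pieces.

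To each piece I would apply Theorem~\ref{thm:Arcangeli_SI}, using intermediate smoothness $\tau_f \wedge \tau(\theta_n)$ for the $f$ and $m$ pieces and $\tau(\theta_n)$ for the noise-only piece. This yields, for each target $g$, an $h_{X_n}^{\tau - s - d(1/2-1/q)_+}$ factor multiplying $\|g - R_{g,\sigma_n^2,\varepsilon}\|_{W^\tau_2(\X)}$ and an $h_{X_n}^{d/\gamma - s}$ factor multiplying $\|(g - R_{g,\sigma_n^2,\varepsilon})_{X_n}\|_2$; the latter directly produce terms 3 and 4 for the $f$ and $m$ pieces, while for the noise-only piece it satisfies $\|(R_{0,\sigma_n^2,\varepsilon})_{X_n}\|_2 \leq \|\varepsilon\|_2$ via $\|k_{X_nX_n}(k_{X_nX_n}+\sigma_n^2 I)^{-1}\|_{\mathrm{op}} \leq 1$, and is absorbed into the constant $C$ multiplying terms 3 and 4. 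In the well-specified smoothness regime $\tau_f \geq \tau(\theta_n)$, the Sobolev-RKHS norm equivalence (Assumption~\ref{assumption:parameters}) combined with $\|g - R_{g,\sigma_n^2,\varepsilon}\|_{\calH_{k(\theta_n)}} \leq \|g\|_{\calH_{k(\theta_n)}} + \|R_{g,\sigma_n^2,\varepsilon}\|_{\calH_{k(\theta_n)}}$ and Lemma~\ref{lem:Regression_Best_Approximation} bound the intermediate Sobolev norm by a constant times $\|g\|_{W^{\tau_f}_2(\X)} + \sigma_n^{-1}\|\varepsilon\|_2$; for the noise-only piece $g = 0$, isolating the $\sigma_n^{-1}\|\varepsilon\|_2$ factor that produces term 2.

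The main obstacle is the misspecified smoothness regime $\tau_f < \tau(\theta_n)$, where $g = f - m(\theta_n) \in W^{\tau_f}_2(\X)$ need not lie in $\calH_{k(\theta_n)}$ and the RKHS triangle-inequality route collapses. To handle this I would import the band-limited function technique of \citet{Narcowich2006} that underpins Theorem~\ref{thm:misspecified_inequality}. One constructs $g_\sigma \in \calH_{k(\theta_n)}$ by a Fourier cut-off at scale proportional to $q_{X_n}^{-1}$ such that $(g_\sigma)_{X_n} = g_{X_n}$ and $\|g_\sigma\|_{\calH_{k(\theta_n)}} \leq C\rho_{X_n}^{\tau(\theta_n)-\tau_f}\|g\|_{W^{\tau_f}_2(\X)}$ via an inverse inequality for band-limited functions. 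The crucial observation is that since $R_{g,\sigma_n^2,\varepsilon}(\theta_n)$ depends on $g$ only through $g_{X_n}$, one has $R_{g,\sigma_n^2,\varepsilon}(\theta_n) = R_{g_\sigma, \sigma_n^2, \varepsilon}(\theta_n)$, so the splitting $g - R_{g,\sigma_n^2,\varepsilon} = (g - g_\sigma) + (g_\sigma - R_{g_\sigma,\sigma_n^2,\varepsilon})$ reduces matters to a standard scattered-data bound on $g - g_\sigma$ and to the well-specified RKHS analysis applied to the band-limited $g_\sigma$. Propagating the $\rho_{X_n}^{\tau(\theta_n)-\tau_f}$ penalty through the sampling inequality, taking the supremum of $\tau(\theta_n)$ over $n \geq N$ (finite by Assumption~\ref{assumption:smoothness}), and finally taking expectations in $\varepsilon$ with uniform bounds on the norm-equivalence constants via Assumption~\ref{assumption:parameters} then recovers the $\rho_{X_n}^{(\tau_k^+ - \tau_f)_+}$ factor in term 1 and completes the proof.
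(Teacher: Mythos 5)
Your proposal is correct and follows essentially the same route as the paper: the identical three-term decomposition derived from \eqref{eq:def_approximation}, Theorem \ref{thm:Arcangeli_SI} applied at smoothness $\tau_{f}\wedge\tau(\theta_{n})$, Lemma \ref{lem:Regression_Best_Approximation} together with the norm equivalences in the well-specified regime, and the band-limited construction of \citet{Narcowich2006} in the misspecified regime. The only cosmetic difference is that the paper routes the misspecified case through the noiseless interpolant $R_{f}(\theta_{n})$ (reusing Theorem \ref{thm:Misspecified_Sobolev_Interpolation}) rather than taking the band-limited function directly as the surrogate; note also that the inverse estimate for that surrogate is naturally stated with a $q_{X_{n}}^{-(\tau(\theta_{n})-\tau_{f})}$ factor, which only becomes the advertised $\rho_{X_{n}}^{\tau(\theta_{n})-\tau_{f}}$ penalty after combining with the $h_{X_{n}}^{\tau(\theta_{n})-s-d(1/2-1/q)_{+}}$ factor from the sampling inequality, exactly as in the paper.
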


\begin{proof}
Expectation with respect to $\varepsilon$ shall be taken at the final step. By the definition of $R^{m}_{f,\sigma^2_n,\varepsilon}(\theta_{n})$ 
\begin{align*}
	R^{m}_{f,\sigma^2_n,\varepsilon}(\theta_{n}) = m(\theta_{n}) + R_{f,\sigma^2_n,\varepsilon}(\theta_{n}) - R_{m,\sigma^2_n,\varepsilon}(\theta_{n}) + R_{0,\sigma^2_n,\varepsilon}(\theta_{n}),
\end{align*}
therefore
\begin{align}
	\bignorm{f-R_{f,\sigma^2_n, \epsilon}^{m}(\theta_{n})}_{W^{s}_{q}(\X)} 
	&\leq \bignorm{f-R_{f,\sigma^2_n, \epsilon}(\theta_{n})}_{W^{s}_{q}(\X)}\nonumber \\ 
	& + \bignorm{m(\theta_{n})-R_{m(\theta_{n}),\sigma^2_n, \epsilon}(\theta_{n})}_{W^{s}_{q}(\X)} \nonumber \\ 
	& + \bignorm{R_{0,\sigma^2_n,\varepsilon}}_{W^{s}_{q}(\X)}, \label{eq:start_deterministic_thm}
\end{align}
so as in the proof of Theorem \ref{thm:Misspecified_Sobolev_Interpolation}, see \eqref{eq:start_interpolation_thm}, without loss of generality it suffices to only consider the case $m = 0$. Use Theorem \ref{thm:Arcangeli_SI} on $f-R_{f,\sigma^2_n,\varepsilon}(\theta_{n})$ to see $\exists h_{1} > 0$ such that for $h_{X_{n}} \leq h_{1}$ and any $s \in [0,\left(\tau_{f}\wedge\tau_{k}^{-}\right)^{*}]$
\begin{align}
	\bignorm{f-R_{f,\sigma^2_n,\varepsilon}(\theta_{n})}_{W^{s}_{q}(\X)} & \leq C_{1}\Lambda_{s,q}\bigg(h_{X_{n}}^{(\tau_{f}\wedge\tau(\theta_{n}))-s-d\left(\frac{1}{2}-\frac{1}{q}\right)_{+}} \bignorm{f-R_{f,\sigma^2_n,\varepsilon}(\theta_{n})}_{W^{\tau_{f}\wedge\tau(\theta_{n})}_{2}(\X)} \nonumber\\
	& \qquad \qquad + h_{X_{n}}^{\frac{d}{\gamma} - s}\bignorm{\left(f-R_{f,\sigma^2_n,\varepsilon}(\theta_{n})\right)_{X_n}}_{2}\bigg), \label{eq:proof_noisy_1stbound}
\end{align}
where $C_{1} = C_{1}(\X,d,q,\tau_{f},\Theta_{N}^{*})$, $h_{1} = h_{1}(R,\delta,d,\tau_{f},\Theta_{N}^{*})$ and $\gamma = \max(2,q)$. The rest of the proof is spent bounding the $W^{\tau_{f}\wedge\tau(\theta_{n})}_{2}(\X)$ norm term.

For the case $\tau(\theta_{n})\leq \tau_{f}$, the triangle inequality and Lemma \ref{lem:Regression_Best_Approximation} can be employed
\begin{align*}
	\bignorm{f - R_{f,\sigma^2_n,\varepsilon}(\theta_{n})}_{W^{\tau_{f}\wedge\tau(\theta_{n})}_{2}(\X)} & = \bignorm{f - R_{f,\sigma^2_n,\varepsilon}(\theta_{n})}_{W^{\tau(\theta_{n})}_{2}(\X)} \\
	& \leq \norm{f}_{W^{\tau(\theta_{n})}_{2}(\X)} + \bignorm{R_{f,\sigma^2_n,\varepsilon}(\theta_{n})}_{W^{\tau(\theta_{n})}_{2}(\X)} \\
	& \leq C_{2}\big(\norm{f}_{W^{\tau_{f}}_{2}(\X)} + \sigma_{n}^{-1}\norm{\varepsilon}_{2}\big),
\end{align*}
with $C_{2}$ bounding the ratio of norm equivalence constants which facilitates the use of RKHS norms in Lemma \ref{lem:Regression_Best_Approximation}, this is analogous to the use of ratio of norm equivalence constants in \eqref{eq:larger_tau_result_4}. Then, combined with \eqref{eq:proof_noisy_1stbound}
\begin{align}
\bignorm{f -R_{f,\sigma^2_n,\varepsilon}(\theta_{n})}_{W^{s}_{q}(\X)} \leq \; C'\Lambda_{s,q}\bigg[&h_{X_{n}}^{\tau(\theta_{n})-s-d\left(\frac{1}{2}-\frac{1}{q}\right)_{+}}\norm{f}_{W^{\tau_{f}}_{2}(\X)} \nonumber\\
		& + h_{X_{n}}^{\tau(\theta_{n}) -s-d \left(\frac{1}{2}-\frac{1}{q}\right)_{+}}\sigma_n^{-1}\norm{\varepsilon}_{2} \label{eq:smooth_version} \\
	& + h_{X_{n}}^{\frac{d}{\gamma}-s}\bignorm{(f-R_{f,\sigma^2_n,\varepsilon}(\theta_{n}))_{X_{n}}}_{2}\bigg],\nonumber
\end{align}
which recovers the desired result for this case. 

For the case when $\tau(\theta_{n}) > \tau_{f}$ first apply the triangle inequality 
\begin{align}
    \bignorm{f-R_{f,\sigma_{n}^{2},\varepsilon}(\theta_{n})}_{W^{s}_{q}(\X)}\leq \bignorm{f-R_{f}(\theta_{n})}_{W^{s}_{q}(\X)} + \bignorm{R_{f}(\theta_{n})-R_{f,\sigma_{n}^{2},\varepsilon}(\theta_{n})}_{W^{s}_{q}(\X)},\label{eq:tri_thm4}
\end{align}
where $R_{f}(\theta_{n})$ is an interpolant of $f$ at the data points. The first term on the right hand side of \eqref{eq:tri_thm4} can be bounded by a direct application of Theorem \ref{thm:Misspecified_Sobolev_Interpolation} (the bound for GPs in the interpolation setting with well-specified likelihood). The second term can be bounded using \eqref{eq:smooth_version} by replacing $f$ with $R_{f}(\theta_{n})$. This can be done since $f$ and $R_{f}(\theta_{n})$ agree at the data points and $R_{f}(\theta_{n})$ and $R_{f,\sigma_{n}^{2},\varepsilon}(\theta_{n})$ have the same smoothness. Combining these two bounds yields
\begin{align}
    & \bignorm{f -R_{f,\sigma^2_n,\varepsilon}(\theta_{n})}_{W^{s}_{q}(\X)} \nonumber\\
    & \leq C'\Lambda_{s,q}\bigg[h_{X_{n}}^{\tau(\theta_{n})-s-d\left(\frac{1}{2}-\frac{1}{q}\right)_{+}}\bignorm{R_{f}}_{W^{\tau(\theta_{n})}_{2}(\X)} + h_{X_{n}}^{\tau(\theta_{n}) -s-d \left(\frac{1}{2}-\frac{1}{q}\right)_{+}}\sigma_n^{-1}\norm{\varepsilon}_{2} \nonumber\\
	& + h_{X_{n}}^{\frac{d}{\gamma}-s}\bignorm{\left(R_{f}(\theta_{n})-R_{f,\sigma^2_n,\varepsilon}(\theta_{n})\right)_{X_{n}}}_{2}\bigg],\nonumber\\
	& \leq C''\Lambda_{s,q}\bigg[h_{X_{n}}^{\tau_{f}-s-d\left(\frac{1}{2}-\frac{1}{q}\right)_{+}}\rho_{X_{n}}^{\left(\tau(\theta_{n})-\tau_{f}\right)}\bignorm{f}_{W^{\tau_{f}}_{2}(\X)}  + h_{X_{n}}^{\tau(\theta_{n}) -s-d \left(\frac{1}{2}-\frac{1}{q}\right)_{+}}\sigma_n^{-1}\norm{\varepsilon}_{2} \nonumber \\
	& + h_{X_{n}}^{\frac{d}{\gamma}-s}\bignorm{\left(f-R_{f,\sigma^2_n,\varepsilon}(\theta_{n})\right)_{X_{n}}}_{2}\bigg],\label{eq:band_lim}
\end{align}
where \eqref{eq:band_lim} uses the fact that $R_{f}(\theta_{n})$ interpolates $f$ to obtain the final term and employs the proof technique involving band limited functions by \cite[Theorem 4.2]{Narcowich2006} to express the $W^{\tau(\theta_{n})}_{2}(\X)$ norm in terms of the $W^{\tau_{f}}_{2}(\X)$ norm.

Combining \eqref{eq:band_lim} with the bound obtained when applying Theorem \ref{thm:Misspecified_Sobolev_Interpolation} to the first term on the right hand side of \eqref{eq:tri_thm4} then taking appropriate upper and lower bounds of $\tau(\theta_{n})$ completes the proof.
\end{proof}

Combining the following lemma with Theorem \ref{thm:Misspecified_Sobolev_corruption} completes the proof of Theorem \ref{thm:Misspecified_Sobolev_Det_Regression}. 

\begin{lemma}\label{lemma:residuals}
	Suppose the assumptions of Theorem \ref{thm:Misspecified_Sobolev_Det_Regression} hold then for any $f\in W^{\tau_{f}}_{2}(\X)$
	\begin{align*}
		\bignorm{\left(f-R_{f,\sigma^2_n,\varepsilon}(\theta_{n})\right)_{X_{n}}}_{2}\leq C\left(\norm{\varepsilon}_{2} + \sigma_n q_{X_{n}}^{-(\tau^{+}_{k}-\tau_{f})_{+}}\norm{f}_{W^{\tau_{f}}_{2}(\X)}\right),
	\end{align*}
	where $C = C\left(\X,d,\tau_{f},\Theta_{N}^{*}\right)$.	
\end{lemma}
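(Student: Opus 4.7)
The plan is to combine Lemma~\ref{lem:Regression_Best_Approximation} with the band-limited surrogate trick used in the proof of Theorem~\ref{thm:Misspecified_Sobolev_corruption}. The key difficulty is that Lemma~\ref{lem:Regression_Best_Approximation} demands a bound in the RKHS norm $\|\cdot\|_{\mathcal{H}_{k(\theta_n)}(\X)}$, which is only available when $f \in \mathcal{H}_{k(\theta_n)}(\X)$. Under Assumption~\ref{assumption:double_SI_application} we may split cleanly into the two regimes $\tau(\theta_n)\leq\tau_f$ and $\tau(\theta_n)\geq\lceil\tau_f\rceil>\tau_f$.

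In the first regime I would observe that the Sobolev embedding $W^{\tau_f}_2(\X)\hookrightarrow W^{\tau(\theta_n)}_2(\X)$ together with the norm equivalence $\|\cdot\|_{W^{\tau(\theta_n)}_2(\X)}\asymp\|\cdot\|_{\mathcal{H}_{k(\theta_n)}(\X)}$ (with ratio controlled by $C_N$ via Assumption~\ref{assumption:parameters}) gives $f\in\mathcal{H}_{k(\theta_n)}(\X)$ with $\|f\|_{\mathcal{H}_{k(\theta_n)}(\X)}\leq C\|f\|_{W^{\tau_f}_2(\X)}$. A direct application of Lemma~\ref{lem:Regression_Best_Approximation}, together with the elementary inequality $(a^2+b^2)^{1/2}\leq a+b$ for $a,b\geq 0$, then yields
\[
\bignorm{(f-R_{f,\sigma_n^2,\varepsilon}(\theta_n))_{X_n}}_2 \;\leq\; 2\norm{\varepsilon}_2 + C\sigma_n\norm{f}_{W^{\tau_f}_2(\X)},
\]
which is the claim since $(\tau_k^+-\tau_f)_+\geq 0$ and, by the fill distance restriction $h_{X_n}\leq h_0$, we may take $q_{X_n}$ small enough that $q_{X_n}^{-(\tau_k^+-\tau_f)_+}\geq 1$.

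In the second regime, $f$ need not lie in $\mathcal{H}_{k(\theta_n)}(\X)$. Here I would invoke the Narcowich-Ward-Wendland band-limited surrogate construction (already used in \eqref{eq:band_lim}) to produce a function $f^\sharp_n\in W^{\tau(\theta_n)}_2(\X)\cong\mathcal{H}_{k(\theta_n)}(\X)$ satisfying $f^\sharp_n(x_i)=f(x_i)$ for every $x_i\in X_n$ and
\[
\bignorm{f^\sharp_n}_{W^{\tau(\theta_n)}_2(\X)} \;\leq\; C\,q_{X_n}^{-(\tau(\theta_n)-\tau_f)}\norm{f}_{W^{\tau_f}_2(\X)}.
\]
Because $f$ and $f^\sharp_n$ agree on $X_n$, the observation vector $y=f_{X_n}+\varepsilon=f^\sharp_{n,X_n}+\varepsilon$ is identical, hence $R_{f,\sigma_n^2,\varepsilon}(\theta_n)=R_{f^\sharp_n,\sigma_n^2,\varepsilon}(\theta_n)$ and $(f-R_{f,\sigma_n^2,\varepsilon}(\theta_n))_{X_n}=(f^\sharp_n-R_{f^\sharp_n,\sigma_n^2,\varepsilon}(\theta_n))_{X_n}$. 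Applying Lemma~\ref{lem:Regression_Best_Approximation} to $f^\sharp_n$, using the norm equivalence and the bound above, gives
\[
\bignorm{(f-R_{f,\sigma_n^2,\varepsilon}(\theta_n))_{X_n}}_2 \;\leq\; 2\norm{\varepsilon}_2 + C\sigma_n q_{X_n}^{-(\tau(\theta_n)-\tau_f)}\norm{f}_{W^{\tau_f}_2(\X)}.
\]
Taking the supremum over $n\geq N$ replaces $\tau(\theta_n)$ by $\tau_k^+$ (again using $q_{X_n}\leq 1$ eventually, and the finite-valued nature of $\{\tau(\theta_n)\}$ under Assumption~\ref{assumption:smoothness} to keep constants uniform).

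The main obstacle is the band-limited construction of $f^\sharp_n$: one needs an interpolating surrogate whose higher-order Sobolev norm is controlled by a negative power of the separation radius. This is precisely the kind of result at the heart of Narcowich-Ward-Wendland's Sobolev bounds on functions with scattered zeros, and its invocation here parallels the step leading to \eqref{eq:band_lim}; the only subtlety is bookkeeping the uniform constants across the finite family of smoothness values in $\Theta_N^*$ so that $C$ depends only on $(\X,d,\tau_f,\Theta_N^*)$.
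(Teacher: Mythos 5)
Your proposal is correct and follows essentially the same route as the paper: the paper likewise constructs the Narcowich--Ward--Wendland band-limited surrogate $f_{\alpha_n}$ agreeing with $f$ on $X_n$ with $\norm{f_{\alpha_n}}_{W^{\tau(\theta_n)}_2(\X)}\lesssim q_{X_n}^{-(\tau(\theta_n)-\tau_f)_+}\norm{f}_{W^{\tau_f}_2(\X)}$, uses that the regression residuals of $f$ and $f_{\alpha_n}$ on $X_n$ coincide, and applies Lemma~\ref{lem:Regression_Best_Approximation} together with norm equivalence. The only cosmetic difference is that the paper handles both smoothness regimes at once via the $(\cdot)_+$ exponent rather than splitting into cases.
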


\begin{proof}
    As discussed in the proof by \citet[Theorem 4.2]{Narcowich2006} for each $n$ there exists a band-limited function $f_{\alpha_{n}}$, where $\alpha_{n}$ is the bandwidth and depends on $q_{X_{n}}$, such that $f_{\alpha}$ equals $f$ at the points $X_{n}$ and $\norm{f_{\alpha}}_{W^{\tau(\theta_{n})}_{2}(\X)}\leq C_{1}q_{X_{n}}^{-(\tau(\theta_{n})-\tau_{f})_{+}}$ for some $C_{1}=C_{1}(\X,d,\tau_{f},\tau(\theta_{n}))$. Using this,
    \begin{align}
        \bignorm{\left(f-R_{f,\sigma^2_n,\varepsilon}(\theta_{n})\right)_{X_{n}}}_{2}  &= \bignorm{\left(f_{\alpha}-R_{f_{\alpha},\sigma^2_n,\varepsilon}(\theta_{n})\right)_{X_{n}}}_{2}\\
        & \leq \norm{\varepsilon}_{2} + \left(\norm{\varepsilon}^{2}_{2} + \sigma_{n}^2 \norm{f_{\alpha_{n}}}_{\mathcal{H}_{k(\theta_{n})}(\X)}^{2}\right)^{\frac{1}{2}}\label{eq:lem18}\\
        & \leq C\left(\norm{\varepsilon}_{2} + \sigma_n q_{X_{n}}^{-(\tau^{+}_{k}-\tau_{f})_{+}}\norm{f}_{W^{\tau_{f}}_{2}(\X)}\right)\label{eq:norm_eq}
    \end{align}
    where \eqref{eq:lem18} used Lemma \ref{lem:Regression_Best_Approximation} and \eqref{eq:norm_eq} used the norm equivalence of the RKHS to a Sobolev space and the aforementioned property of $f_{\alpha_{n}}$ to obtain the $q_{X_{n}}$ term.
\end{proof}

\section{Proof of Theorem \ref{thm:misspec_likelihood_inter}}

We will denote by $R_{\varepsilon}$ the kernel interpolant of the the noise, meaning $R_{\varepsilon}(x) = k_{xX}k_{XX}^{-1}\varepsilon$. 

\begin{lemma}\label{lem:random_rkhs_norm}
	Let $k$ be a $\tau$-smooth kernel for $\tau > d/2$, $\varepsilon\in\bbR^{n}$ and $X_{n}\subset\X$. Then, $\exists C>0$ such that $\norm{R_{\varepsilon}}_{\calH_{k}(\X)}\leq C\norm{\varepsilon}_{2}q_{X_{n}}^{-(\tau-d/2)}$ for some $C = C(d,k)$ with the dependence on $k$ entering through the RKHS norm equivalence constants. 
\end{lemma}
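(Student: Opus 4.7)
The plan is to reduce the RKHS norm of the noise interpolant to a quadratic form in the Gram matrix and then invoke a classical lower bound on the minimum eigenvalue of the Gram matrix in terms of the separation radius.

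First I would note that, by the representer theorem for minimum-norm interpolation, $R_\varepsilon = \sum_{i=1}^n \alpha_i k(\cdot, x_i)$ with coefficient vector $\alpha = k_{X_nX_n}^{-1}\varepsilon$. Using the reproducing property to evaluate the inner product gives the familiar identity
\begin{align*}
\bignorm{R_\varepsilon}^2_{\calH_k(\X)} \;=\; \alpha^\top k_{X_nX_n}\alpha \;=\; \varepsilon^\top k_{X_nX_n}^{-1}\varepsilon.
\end{align*}
A Rayleigh quotient argument then yields
\begin{align*}
\bignorm{R_\varepsilon}^2_{\calH_k(\X)} \;\leq\; \lambda_{\min}(k_{X_nX_n})^{-1}\norm{\varepsilon}_2^2.
\end{align*}

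Next I would invoke the standard lower bound on the smallest eigenvalue of a Gram matrix from $\tau$-smooth kernels, namely that there exists $c_k>0$, depending only on $d$ and the norm-equivalence constants between $\calH_k(\X)$ and $W^\tau_2(\X)$, such that
\begin{align*}
\lambda_{\min}(k_{X_nX_n}) \;\geq\; c_k \, q_{X_n}^{2\tau-d}.
\end{align*}
This is the classical Narcowich--Ward type bound, see e.g.\ \citet[Theorem 12.3]{Wendland2005} for translation invariant $\tau$-smooth kernels; the general $\tau$-smooth case follows by the norm equivalence between $\calH_k(\X)$ and $W^\tau_2(\X)$ assumed throughout the paper.

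Combining the two displays and taking square roots gives
\begin{align*}
\bignorm{R_\varepsilon}_{\calH_k(\X)} \;\leq\; c_k^{-1/2}\,q_{X_n}^{-(\tau-d/2)}\norm{\varepsilon}_2,
\end{align*}
which is precisely the claimed inequality with $C = c_k^{-1/2}$ depending only on $d$ and on $k$ through the norm-equivalence constants. The only non-routine ingredient is the lower bound on $\lambda_{\min}(k_{X_nX_n})$, which is the usual technical heart of such statements in the SDA literature and is the step I would rely on citing rather than reproving.
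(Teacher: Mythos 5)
Your proposal is correct and follows essentially the same route as the paper's own proof: the identity $\norm{R_\varepsilon}_{\calH_k(\X)}^2 = \varepsilon^\top k_{XX}^{-1}\varepsilon$, the Rayleigh--Ritz bound by $\lambda_{\min}(k_{XX})^{-1}\norm{\varepsilon}_2^2$, and the separation-radius lower bound on $\lambda_{\min}(k_{XX})$ from \citet[Theorem 12.3]{Wendland2005}. Nothing is missing.
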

\begin{proof}
	Denote by $\lambda_{\text{min}}(A),\lambda_{\text{max}}(A)$ the minimum and maximum eigenvalues of some matrix $A$. Then:
	\begin{align}
		\norm{R_{\varepsilon}(\theta_{n})}_{H_{k}(\X)}^{2} = \varepsilon^{\top}k_{XX}^{-1}\varepsilon \leq\norm{\varepsilon}_{2}^{2} \sup_{x\in\bbR^{n}, x\neq 0}& \frac{x^{\top}k_{XX}^{-1}x}{\norm{x}_{2}^{2}}  = \norm{\varepsilon}_{2}^{2}\lambda_{\text{max}}(k_{XX}^{-1}) \label{eq:courant}\\
		& = \norm{\varepsilon}_{2}^{2}\lambda_\text{min}(k_{XX})^{-1}  \leq C\norm{\varepsilon}_{2}^{2}q_{X_{n}}^{-(2\tau-d)},\label{eq:min_eig_bound}
	\end{align}
	where the first inequality in \eqref{eq:courant} is by the reproducing property and the last inequality is by the Rayleigh-Ritz theorem \citep{Horn1985}, \eqref{eq:min_eig_bound} is by using the bounds on minimum eigenvalues of kernel matrices discussed by \citet[Theorem 12.3]{Wendland2005} which are applicable since $k$ is $\tau$-smooth. See e.g.\ \citep[Section 3]{Narcowich2006} for further discussion. 
\end{proof}

To prove Theorem \ref{thm:misspec_likelihood_inter} proceed as in the proof of Theorem \ref{thm:Misspecified_Sobolev_corruption} up to \eqref{eq:proof_noisy_1stbound} and note the residual term is simply $\norm{\varepsilon}_{2}$ since $R_{f,0,\varepsilon}(\theta_{n})$ interpolates the corrupted data, rather than $f_{X}$. So $\exists \:C_{1}, h_{1} > 0$ with $C_{1} = C_{1}(\X,d,q,\tau_{f},\Theta_{N}^{*})$, $h_{1} = h_{1}(R,\delta,d,\tau_{f},\Theta_{N}^{*})$ such that for any $X_{n}$ with $h_{X_{n}}\leq h_{1}$
\begin{align}
&\bignorm{f-R_{f,0,\varepsilon}(\theta_{n})}_{W^{s}_{q}(\X)}  \nonumber\\
& \qquad\leq C_{1}\Lambda_{s,q}\bigg(h_{X_{n}}^{(\tau_{f}\wedge\tau(\theta_{n}))-s-d\left(\frac{1}{2}-\frac{1}{q}\right)_{+}} \bignorm{f-R_{f,0,\varepsilon}(\theta_{n})}_{W^{\tau_{f}\wedge\tau(\theta_{n})}_{2}(\X)}  + h_{X_{n}}^{\frac{d}{\gamma} - s}\norm{\varepsilon}_{2}\bigg). \label{eq:1st_bound}
\end{align}

First consider the case when $\tau_{f}\geq\tau(\theta_{n})$ 
\begin{align}
	\norm{f-R_{f,0,\varepsilon}(\theta_{n})}_{W^{\tau_{f}\wedge\tau(\theta_{n})}_{2}(\X)} & \leq \norm{f-R_{f}(\theta_{n})}_{W^{\tau_{f}}_{2}(\X)} + \norm{R_{\varepsilon}(\theta_{n})}_{W^{\tau(\theta_{n})}_{2}(\X)}\label{eq:tri}\\
	& \leq \norm{f}_{W^{\tau_{f}}_{2}(\X)} + C_{2}\norm{\varepsilon}_{2}q_{X_{n}}^{-(\tau(\theta_{n})-\frac{d}{2})},\label{eq:py}
\end{align}
where \eqref{eq:tri} is the triangle inequality and \eqref{eq:py} is by Lemma \ref{lem:pythag} and Lemma \ref{lem:random_rkhs_norm} with $C_{2} = C(d,\Theta_{N}^{*})$, the dependency on $\Theta_{N}^{*}$ manifested by using the ratio of norm equivalence constants to move from Sobolev to RKHS norm. Combining this with \eqref{eq:1st_bound}
\begin{align}
	&\bignorm{f-R_{f,0,\varepsilon}(\theta_{n})}_{W^{s}_{q}(\X)}  \nonumber\nonumber\\
& \quad\leq C_{1}\Lambda_{s,q}\bigg(h_{X_{n}}^{\tau(\theta_{n})-s-d\left(\frac{1}{2}-\frac{1}{q}\right)_{+}} \big(\norm{f}_{W^{\tau_{f}}_{2}(\X)} + C_{2}\norm{\varepsilon}_{2}q_{X_{n}}^{-(\tau(\theta_{n})-\frac{d}{2})}\big)  + h_{X_{n}}^{\frac{d}{\gamma} - s}\norm{\varepsilon}_{2}\bigg) \nonumber \\
& \quad\leq C_{3}\Lambda_{s,q}\bigg(h_{X_{n}}^{\tau(\theta_{n})-s-d\left(\frac{1}{2}-\frac{1}{q}\right)_{+}} \norm{f}_{W^{\tau_{f}}_{2}(\X)} + \big(h_{X_{n}}^{\frac{d}{\gamma}-s} + \rho_{X_{n}}^{\tau(\theta_{n})-\frac{d}{2}}h_{X_{n}}^{\frac{d}{2} - s - d\left(\frac{1}{2}-\frac{1}{q}\right)_{+}}\big)\norm{\varepsilon}_{2}\bigg)\label{eq:abs}\\
& \quad \leq C_{4}\Lambda_{s,q}\bigg(h_{X_{n}}^{\tau(\theta_{n})-s-d\left(\frac{1}{2}-\frac{1}{q}\right)_{+}} \norm{f}_{W^{\tau_{f}}_{2}(\X)} + h_{X_{n}}^{\frac{d}{\gamma}-s}\rho_{X_{n}}^{\tau(\theta_{n})-\frac{d}{2}}\norm{\varepsilon}_{2}\bigg),\label{eq:h_exp}
\end{align}
where \eqref{eq:abs} is absorbing the constants to the front and \eqref{eq:h_exp} is because the exponents of the $h_{X_{n}}$ terms that are multiplied by $\norm{\varepsilon}_{2}$ are the same and $\rho_{X_{n}}\geq 1$. Taking upper and lower bounds of $\tau(\theta_{n})$ completes the proof for this case. 

Now consider $\tau_{f} < \tau(\theta_{n})$. In a similar fashion to the second part of the proof of Theorem \ref{thm:Misspecified_Sobolev_corruption}, we use the triangle inequality
\begin{align}
    \bignorm{f-R_{f,0,\varepsilon}(\theta_{n})}_{W^{s}_{q}(\X)}\leq \bignorm{f-R_{f}(\theta_{n})}_{W^{s}_{q}(\X)} + \bignorm{R_{f}(\theta_{n})-R_{f,0,\varepsilon}(\theta_{n})}_{W^{s}_{q}(\X)},\label{eq:ti}
\end{align}
where the first term on the right hand side can be bounded by Theorem \ref{thm:Misspecified_Sobolev_Interpolation} since it does not involve observation corruption, and the second term on the right hand side can be bounded by \eqref{eq:h_exp} by replacing $f$ with $R_{f}(\theta_{n})$ since $R_{f}(\theta_{n})$ and $R_{f,0,\varepsilon}(\theta_{n})$ have the same smoothness. Therefore
\begin{align}
    &\bignorm{f-R_{f,0,\varepsilon}(\theta_{n})}_{W^{s}_{q}(\X)}\nonumber\\
    & \qquad\leq C_{5}\Lambda_{s,q}\bigg(h_{X_{n}}^{\tau(\theta_{n})-s-d\left(\frac{1}{2}-\frac{1}{q}\right)_{+}} \norm{R_{f}(\theta_{n})}_{W^{\tau(\theta_{n})}_{2}(\X)} + h_{X_{n}}^{\frac{d}{\gamma}-s}\rho_{X_{n}}^{\tau(\theta_{n})-\frac{d}{2}}\norm{\varepsilon}_{2}\bigg)\label{eq:thm1_h_exp}\\
    & \qquad\leq C_{6}\Lambda_{s,q}\bigg(h_{X_{n}}^{\tau(\theta_{n})-s-d\left(\frac{1}{2}-\frac{1}{q}\right)_{+}} q_{X_{n}}^{-(\tau(\theta_{n})-\tau_{f})}\norm{f}_{W^{\tau_{f}}_{2}(\X)} + h_{X_{n}}^{\frac{d}{\gamma}-s}\rho_{X_{n}}^{\tau(\theta_{n})-\frac{d}{2}}\norm{\varepsilon}_{2}\bigg)\label{eq:bern}\\
    & \qquad\leq C_{6}\Lambda_{s,q}h_{X_{n}}^{\frac{d}{\gamma}-s}\rho_{X_{n}}^{\tau(\theta_{n})-\tau_{f}}\bigg(h_{X_{n}}^{\tau_{f}-\frac{d}{2}} \norm{f}_{W^{\tau_{f}}_{2}(\X)} + \rho_{X_{n}}^{\tau_{f}-\frac{d}{2}}\norm{\varepsilon}_{2}\bigg),\label{eq:collect}
\end{align}
where \eqref{eq:thm1_h_exp} is applying Theorem \ref{thm:Misspecified_Sobolev_Interpolation} and \eqref{eq:h_exp} to the terms on the right hand side of \eqref{eq:ti}. Then \eqref{eq:bern} uses, as was done in the proof of Theorem \ref{thm:Misspecified_Sobolev_corruption}, the proof technique involving band-limited functions by \citet[Theorem 4.2]{Narcowich2006}. Finally \eqref{eq:collect} is collecting the mesh ratio and fill distance terms to the front. Taking appropriate upper and lower bounds for $\tau(\theta_{n})$ in terms of $\tau_{k}^{+},\tau_{k}^{-}$ completes the proof of Theorem \ref{thm:misspec_likelihood_inter}.

\section{Proof of Theorem \ref{thm:Misspecified_Sobolev_Random_Regression}} \label{appendix:gaussian_noise}

The proof of Theorem \ref{thm:Misspecified_Sobolev_Random_Regression} is simply combination of Theorem \ref{thm:Misspecified_Sobolev_corruption} with a bound on the residual terms and substituting in a bound for $\bbE[\norm{\varepsilon}_{2}]$. The bound on the residual terms is obtained from an adaptation of the result by \citet[Theorem 1, Theorem 5]{VanderVaart2011} to the case of altering hyperparamters. Proving this adaptation is a tedious matter of checking that the constants involved in the bound by \citet[Theorem 1, Theorem 5]{VanderVaart2011}, which are different for each parameter value, may be controlled given our assumptions on the hyperparameters. The adaptation is stated next along with an explanation of how it is used to prove Theorem \ref{thm:Misspecified_Sobolev_Random_Regression} and then a proof of the adaptation is given. 

\begin{proposition}\label{prop:residual_contraction}
	Suppose Assumptions 1-\ref{ass:target_extension} then $\exists C = C\left(\norm{f}_{W^{\tau_{f}}_{2}(\X)},\Theta_{N}^{*}\right)$ such that for $n\geq N$ 
	\begin{align*}
		\mathbb{E}\left[\bignorm{\left(f - R_{f,\sigma^{2},\varepsilon}(\theta_n)\right)_{X_{n}}}_{2}\right] 
		& \leq C\left(n^{\left(\frac{1}{2}-\frac{\tau_{f}}{2\tau_{k}^{+}}\right)_{+}\vee \frac{d}{4\tau_{k}^{-}}}\right).
	\end{align*}
\end{proposition}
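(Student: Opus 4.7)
The plan is to adapt Theorems 1 and 5 of \citet{VanderVaart2011}, which provide posterior contraction rates for Gaussian process regression with Gaussian likelihood, to our setting where the prior hyperparameters $\theta_n$ may evolve with $n$. Their results control the empirical $\ell_2$ distance $n^{-1/2}\|(f-g)_{X_n}\|_2$ between the true function and a draw $g$ from the posterior, in terms of a concentration function built from the small-ball probability of the prior and from the best RKHS approximation of $f$.

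First I would instantiate their Theorem 1 for a fixed index $n$ with prior $\Pi_{k(\theta_n)}$ and Gaussian likelihood $\mathcal{N}(0,\sigma^2)$. This produces an in-probability bound of the form $\|(f-g)_{X_n}\|_2 \lesssim \sqrt{n}\,\epsilon_n(\theta_n)$, where $\epsilon_n(\theta_n)$ is the solution of the concentration-function equation. Under Assumption \ref{ass:target_extension}, Lemma 4 of \citet{VanderVaart2011} furnishes an RKHS approximation of $f$ whose approximation error and RKHS norm depend only on $\|f\|_{W^{\tau_f}_2(\X)}$. Standard calculations for Sobolev-type RKHSs then give $\epsilon_n(\theta_n)$ of order $n^{-\tau_f/(2\tau(\theta_n))}$ in the misspecified regime $\tau_f < \tau(\theta_n)$ and of order $n^{-1/2}$ otherwise, combined with a metric-entropy floor of order $n^{-1/2+d/(4\tau(\theta_n))}$ coming from the unit ball of $\calH_{k(\theta_n)}(\X)$. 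Since the posterior mean $R_{f,\sigma^2,\varepsilon}(\theta_n)$ is the $L^2$-projection of the posterior onto the observation locations, the same concentration bound transfers to it.

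Second, I would convert the in-probability contraction into the expectation bound claimed in the proposition. Combining the exponential tail in \citet[Theorem 1]{VanderVaart2011} with the finite-moment control of the Gaussian noise, the tails of $\|(f - R_{f,\sigma^2,\varepsilon}(\theta_n))_{X_n}\|_2$ are integrable, so integration by parts on $\bbE[\|(f - R_{f,\sigma^2,\varepsilon}(\theta_n))_{X_n}\|_2]$ upgrades the probability bound to $\bbE[\|\cdot\|_2] \lesssim \sqrt{n}\,\epsilon_n(\theta_n)$. Multiplying by $\sqrt{n}$ and simplifying the two regimes yields the exponents $\big(\tfrac{1}{2} - \tfrac{\tau_f}{2\tau(\theta_n)}\big)_+ \vee \tfrac{d}{4\tau(\theta_n)}$ appearing in the statement.

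The main obstacle, and the reason this is not simply a direct citation, is making every constant uniform over $n\geq N$. This is where our standing assumptions interact delicately. Assumption \ref{ass:small_ball} provides a uniform lower bound $\exp(-\alpha_N)$ on the prior small-ball probabilities, which is the critical input to the concentration-function calculation and would otherwise degenerate as $\theta_n$ varies; Assumption \ref{assumption:smoothness} confines $\{\tau(\theta_n)\}_{n\geq N}$ to finitely many values, so replacing $\tau(\theta_n)$ by the worst case $\tau_k^+$ in the numerator slowdown and $\tau_k^-$ in the entropy floor is legitimate and yields the exponents in the statement; Assumption \ref{assumption:parameters} bounds the ratio of norm-equivalence constants $C_N$, which allows $\|f\|_{\calH_{k(\theta_n)}(\X)}$ (the quantity that actually enters \citet[Lemma 4]{VanderVaart2011}) to be controlled uniformly by $\|f\|_{W^{\tau_f}_2(\X)}$ via Theorem \ref{thm:Sobolev_Extension} and the restriction argument of Section \ref{sec:background}. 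Collecting these uniform controls packages all $n$-dependence outside the stated exponent into a single constant $C = C(\|f\|_{W^{\tau_f}_2(\X)}, \Theta_N^*)$, completing the proof.
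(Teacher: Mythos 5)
Your proposal follows essentially the same route as the paper: adapt the van der Vaart--van Zanten concentration-function machinery (decentering via their Lemma 4, small-ball via Linde's metric-entropy argument), use Assumptions \ref{assumption:parameters}--\ref{ass:target_extension} to make all constants uniform over $n\geq N$, and pass from the posterior to the posterior mean by convexity. The only superfluous step is your tail-integration argument to upgrade an ``in-probability'' bound to expectation: the cited Theorem 1 (restated here as Theorem \ref{thm:Sobolev_Contraction}) already bounds the expected posterior risk, so Jensen's inequality alone suffices, which is exactly what the paper does.
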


Direct substitution of this bound into the residuals in Theorem \ref{thm:Misspecified_Sobolev_corruption} and noting that $\sigma^{-1}\bbE[\norm{\varepsilon}_{2}] \leq n^{\frac{1}{2}}$ completes the proof of Theorem \ref{thm:Misspecified_Sobolev_Random_Regression}. Proposition \ref{prop:residual_contraction} is proved by using Jensen's inequality on Theorem \ref{thm:Sobolev_Contraction} in combination with Corollary \ref{cor:gen_inv_bound} to obtain the desired bound. The rest of this section shall prove these intermediate results. 

Before starting the details of the proof of Proposition \ref{prop:residual_contraction} we recall the definition of \emph{H\"older spaces of functions} $C^{\tau}(\X)$. For $\tau > 0$ and $\X\subseteq\bbR^{d}$ an open set, $C^{\tau}(\X)$ is the space of functions $f\colon\X\rightarrow\bbR$ with $\norm{f}_{C^{\tau}(\X)}<\infty$ where
	\begin{align*}
		\norm{f}_{C^{\tau}(\X)} = \max_{m\colon \lvert m\rvert \leq \lfloor \tau - 1\rfloor}\sup_{x\in\X}\left| D^{m}f(x)\right| + \max_{m\colon \lvert m\rvert \leq \lfloor \tau - 1\rfloor}\sup_{\substack{x,y\in\X \\ x\neq y}}\frac{\lvert D^{m}f(x)-D^{m}f(y)\rvert}{\norm{x-y}_{2}^{\tau - \lfloor\tau - 1\rfloor}},
	\end{align*}
where $m = (m_{1},\ldots,m_{d})$ is a multi-index, $\lvert m\rvert = \sum_{i=1}^{d}m_{i}$ and $D^{m}$ is the partial differential operator corresponding to $m$. Now the framework by \citet{VanderVaart2011} is presented which views the Gaussian process as a measure on function space. The techniques discussed are detached from the results in the present paper and are discussed only to prove Proposition \ref{prop:residual_contraction}, for further details of their origin and use in Bayesian nonparametrics see e.g.\ \citep{Ghosal2017}.

Let $\Pi_{k(\theta_{n})}$ denote the probability measure associated with a GP with zero mean and kernel $k(\theta_{n})$ over $\X$. Set we $\Pi_{\theta_{n}} = \Pi_{k(\theta_{n})}$ for ease of notation. Given a target function $f$ and a set of points $X_{n} = \{x_{i}\}_{i=1}^{n}$ and observations $y_{i} = f(x_{i}) + \varepsilon_{i}$ with $\varepsilon_{i}$ i.i.d $\mathcal{N}(0,\sigma^{2})$ denote the posterior distribution of $\Pi_{\theta_{n}}$ given $\{y_{i}\}_{i=1}^{n}$ as $\Pi_{\theta_{n}}(\cdot | y_{1:n})$. For $\varepsilon > 0$ and $f$ a continuous function over the closure of $\X$ define the concentration function
\begin{align*}
	\phi_{\theta_{n},f}(\varepsilon) = \inf_{\substack{h\in\calH_{k(\theta_{n})}(\X) \\ \norm{h - f}_{L^{\infty}(\X)} < \varepsilon}}\frac{1}{2}\norm{h}_{\calH_{k(\theta_{n})}(\X)}^{2} - \log\Pi_{\theta_{n}}(g\: :\: \norm{g}_{L^{\infty}(\X)} < \varepsilon).
\end{align*}
The first term is called the decentering function and the second the small ball probability. This is finite if and only if $f$ is contained in the closure of $\calH_{k(\theta_{n})}(\X)$ with respect to the supremum norm, which will be true under the assumptions of the theorems in Section \ref{sec:noisy_convergence}. The next result by \citet[Theorem 1]{VanderVaart2011} shows the residuals may be controlled by the concentration function.

\begin{theorem}\label{thm:Sobolev_Contraction}
	Let $\X$ be a compact set then $ \exists C > 0$ such that for every $f\in C(\X)$ 
	\begin{align*}
		\frac{1}{n}\mathbb{E}\left[\int\bignorm{(g-f)_{X_{n}}}_{2}^{2}d\Pi_{\theta_{n}}(g|y_{1:n})\right]\leq C\psi_{\theta_{n},f}^{-1}(n)^{2},
	\end{align*}
	where the expectation is being taken with respect to the noise, $\psi_{\theta_{n},f}(\varepsilon) = \phi_{\theta_{n},f}(\varepsilon)/\varepsilon^{2}$ and $\psi^{-1}_{\theta_{n},f}$ is the generalised inverse of $\psi_{\theta_{n},f}$.
\end{theorem}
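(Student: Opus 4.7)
The plan is to follow the standard posterior contraction framework of Ghosal--Ghosh--van der Vaart as adapted to Gaussian process priors by van der Vaart and van Zanten, working directly with the empirical semi-norm $\|(g-f)_{X_n}\|_2/\sqrt{n}$ which is the natural distance associated with the Gaussian likelihood on $y_{1:n}$. First I would set $\varepsilon_n = \psi_{\theta_n,f}^{-1}(n)$, so by definition $\phi_{\theta_n,f}(\varepsilon_n) \leq n \varepsilon_n^2$. The classical decomposition of the concentration function (splitting into the decentering term and the small-ball term, and then invoking the Cameron--Martin shift) then gives the prior-mass lower bound
\begin{align*}
\Pi_{\theta_n}\!\left(g : \|g - f\|_{L^\infty(\overline{\X})} < \varepsilon_n\right) \;\geq\; \exp\!\left(-\phi_{\theta_n,f}(\varepsilon_n)\right) \;\geq\; \exp(-n\varepsilon_n^2).
\end{align*}
This is the crucial ``$\Pi_{\theta_n}(B(f,\varepsilon_n)) \geq e^{-n\varepsilon_n^2}$'' hypothesis needed to run the general contraction machinery.

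Second, I would construct a sieve $\mathcal{B}_n \subset C(\overline{\X})$ of the form $M_n \mathbb{B}_1 + \varepsilon_n \mathbb{B}_\infty$, where $\mathbb{B}_1$ is the unit ball of $\mathcal{H}_{k(\theta_n)}(\X)$ and $\mathbb{B}_\infty$ the supremum-norm unit ball, with $M_n$ calibrated so that Borell's inequality (combined with the small ball estimate above) yields $\Pi_{\theta_n}(\mathcal{B}_n^c) \leq e^{-C_1 n\varepsilon_n^2}$ for a suitable constant $C_1$. Standard Gaussian metric entropy bounds then control the covering number of $\mathcal{B}_n$ in the empirical norm, producing local minimax-style tests $\phi_n$ of $\{f\}$ versus $\{g : \|(g-f)_{X_n}\|_2/\sqrt{n} > M \varepsilon_n\}\cap \mathcal{B}_n$ based on the Gaussian likelihood ratio, with type-I and type-II errors bounded by $e^{-C_2 n M^2 \varepsilon_n^2}$ for a large enough constant $M$.

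Third, combining the prior mass lower bound, the sieve mass upper bound, and the tests with the standard Ghosal--van der Vaart argument (bounding the posterior by $\phi_n + \mathbf{1}_{\mathcal{B}_n^c} + \text{ratio on $\mathcal{B}_n$}$ and dividing by the evidence, whose denominator is controlled by the prior-mass bound), one obtains that for all $M$ large enough and a constant $C_3$,
\begin{align*}
\mathbb{E}\,\Pi_{\theta_n}\!\left(g : \|(g-f)_{X_n}\|_2/\sqrt{n} > M \varepsilon_n \;\big|\; y_{1:n}\right) \;\leq\; e^{-C_3 n \varepsilon_n^2}.
\end{align*}
To finally obtain an $L^2$-moment statement rather than a probability statement, I would truncate: on the ``good'' event the posterior puts mass on functions with $\|(g-f)_{X_n}\|_2^2/n \leq M^2 \varepsilon_n^2$ directly, while on the complement I would use the sieve $\mathcal{B}_n$ (whose elements are uniformly bounded, so $\|(g-f)_{X_n}\|_2^2/n$ is $O(M_n^2)$) together with the exponential tail above to absorb the remainder into $C\varepsilon_n^2$.

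The main obstacle is not the individual step but the bookkeeping that makes everything \emph{uniform in $n\geq N$} given that $\theta_n$ varies. One needs Assumption~\ref{ass:small_ball} to prevent the small-ball term in $\phi_{\theta_n,f}$ from degenerating with $n$, Assumption~\ref{ass:target_extension} to bound the decentering term using an RKHS approximation of $f$ (this is where Sobolev regularity of $f$ gets converted to a concrete $\varepsilon_n$), and the uniform control $\Theta_N^*$ to keep the constants $C_1, C_2, C_3$ and the entropy estimates from exploding as the kernel hyperparameters move. This uniform version is precisely \citep[Theorem~1]{VanderVaart2011}; the remaining work, carried out in Proposition~\ref{prop:residual_contraction} via Corollary~\ref{cor:gen_inv_bound}, is to invert $\psi_{\theta_n,f}$ explicitly using the Sobolev smoothness $\tau_f$ and the kernel smoothness range $[\tau_k^-, \tau_k^+]$ to produce the stated rate $n^{(1/2 - \tau_f/(2\tau_k^+))_+ \vee d/(4\tau_k^-)}$.
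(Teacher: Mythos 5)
A point of orientation first: the paper does not prove Theorem~\ref{thm:Sobolev_Contraction} at all. It is imported verbatim from \citet[Theorem 1]{VanderVaart2011} (the sentence immediately preceding the statement says as much), with only a remark afterwards about passing from the open domain $\X$ to its compact closure via the extension $f^{\circ}$ of Assumption~\ref{ass:target_extension}. So your sketch is necessarily a different route from the paper's, and the question is whether it would stand on its own as a proof of the cited result. The architecture you describe --- prior-mass lower bound from the concentration function, sieve $\mathcal{B}_{n} = M_{n}\mathbb{B}_{1} + \varepsilon_{n}\mathbb{B}_{\infty}$ with Borell's inequality, entropy-based tests in the empirical norm --- is the correct machinery for the posterior-\emph{probability} statement $\mathbb{E}\,\Pi_{\theta_{n}}\left(g : \norm{(g-f)_{X_{n}}}_{2}/\sqrt{n} > M\varepsilon_{n} \mid y_{1:n}\right) \leq e^{-C_{3}n\varepsilon_{n}^{2}}$, and you correctly identify where Assumptions~\ref{ass:small_ball} and \ref{ass:target_extension} and the set $\Theta_{N}^{*}$ enter the surrounding results (Lemmas~\ref{lem:decentering} and \ref{lem:small_ball}, Corollary~\ref{cor:gen_inv_bound}). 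Note, though, that the uniformity over the varying $\theta_{n}$ that you flag as ``the main obstacle'' is not part of this theorem: as stated it is a fixed-$n$, fixed-prior statement with a universal constant, and the uniformity is handled downstream when $\psi_{\theta_{n},f}^{-1}(n)$ is bounded.

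The genuine gap is in your final step, converting the exponential posterior-probability bound into the second-\emph{moment} bound the theorem actually asserts. Your truncation controls the integral of $\norm{(g-f)_{X_{n}}}_{2}^{2}/n$ over the contraction event and over $\mathcal{B}_{n}\cap\{\norm{(g-f)_{X_{n}}}_{2}/\sqrt{n} > M\varepsilon_{n}\}$, but the posterior also charges $\mathcal{B}_{n}^{c}$, where the integrand is unbounded (a Gaussian prior gives no uniform bound on $\norm{g}_{L^{\infty}}$), so ``exponentially small mass times bounded integrand'' does not apply there. Closing this requires either tail bounds $\mathbb{E}\,\Pi_{\theta_{n}}(\norm{(g-f)_{X_{n}}}_{2}/\sqrt{n} > s\mid y_{1:n})\leq e^{-cns^{2}}$ at \emph{every} radius $s\geq M\varepsilon_{n}$ with $s$ in the exponent, so that the tail integral $\int 2s\,\mathbb{E}\Pi(\cdot>s)\,ds$ is $O(\varepsilon_{n}^{2})$ --- and the sieve-complement term $\mathbb{E}\Pi(\mathcal{B}_{n}^{c}\mid y_{1:n})$ does not improve with $s$, so it must be treated separately --- or a Fubini plus Cauchy--Schwarz argument under the \emph{prior}, using $\mathbb{E}_{f}[p_{g}/p_{f}(y_{1:n})]=1$ and Fernique's theorem to control $\int\norm{g}_{L^{\infty}}^{4}\,d\Pi_{\theta_{n}}$, together with a separate treatment of the low-evidence event. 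That is precisely the technical content of the proof in \citet{VanderVaart2011}, and it is the part your sketch elides. A secondary quantitative point: Borell's inequality forces $M_{n}\asymp\sqrt{n}\,\varepsilon_{n}$ to get $\Pi_{\theta_{n}}(\mathcal{B}_{n}^{c})\leq e^{-C_{1}n\varepsilon_{n}^{2}}$, so even your bounded-on-the-sieve contribution is of order $n\varepsilon_{n}^{2}e^{-C_{3}n\varepsilon_{n}^{2}}$ and is absorbed into $C\varepsilon_{n}^{2}$ only when $n\varepsilon_{n}^{2}\gtrsim\log n$; this holds for the polynomial rates produced by Corollary~\ref{cor:gen_inv_bound} but should be stated.
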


Compactness of $\X$ is assumed whereas in Theorem \ref{thm:Misspecified_Sobolev_Random_Regression} we assumed $\X$ is open. This is not an issue since it is assumed the target function can be extended to all of $\bbR^{d}$ so Theorem \ref{thm:Sobolev_Contraction} may be applied to the restriction of the extension to the closure of $\X$, which is compact and contains all the observation points. The decentering function and the small ball probability needs to be bounded. The decentering function is bounded in Lemma \ref{lem:decentering} and the small ball probability in Lemma \ref{lem:small_ball} which requires more technical work. 

Specifically the decentering term is dealt with by upper bounding norm equivalence constants that occur in the proof by \citet[Lemma 4]{VanderVaart2011} when performing the kernel convolution approximation argument in that proof, this is summarised in the next lemma.

\begin{lemma}\label{lem:decentering}
	Let $f$ be the restriction to the closure of $\X$ of some $f^{\circ}\in C^{\tau_{f}}(\bbR^{d})\cap W^{\tau_{f}}_{2}(\bbR^{d})$ with $\tau(\theta_{n}) > \tau_{f} > d/2$. Then, $\exists C = C\left(\norm{f}_{W^{\tau_{f}}_{2}(\X)},\Theta_{N}^{*}\right)$ such that for $\varepsilon < 1$
	\begin{align*}
		\inf_{\substack{h\in\calH_{k(\theta_{n})}(\X) \\ \norm{h - f}_{L^{\infty}(\X)} < \varepsilon}}\frac{1}{2}\norm{h}_{\calH_{k(\theta_{n})}(\X)}^{2} 
		\;\leq \; C\varepsilon^{-\frac{2(\tau(\theta_{n})-\tau_{f})}{\tau_{f}}} 
		\; \leq \; C\varepsilon^{-2\frac{(\tau_{k}^{+}-\tau_{f})}{\tau_{f}}}.
	\end{align*}
\end{lemma}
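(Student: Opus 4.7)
The plan is to follow the mollification/convolution strategy in the proof of Lemma 4 of \citet{VanderVaart2011}, but to track carefully how every constant depends on $\theta_n$ and then use Assumptions \ref{assumption:parameters}, \ref{assumption:smoothness}, and \ref{ass:target_extension} to uniformize. Fix $n \geq N$ for the moment. The explicit candidate will be $h_\delta = \psi_\delta * f^{\circ}$ (restricted to $\X$), where $\psi \in C^\infty_c(\bbR^d)$ is a fixed smooth mollifier with $\int \psi = 1$ and sufficiently many vanishing moments, $\psi_\delta(x) = \delta^{-d}\psi(x/\delta)$, and $\delta > 0$ is a scale parameter chosen below. Crucially, $\psi$ is independent of $\theta_n$, so all constants coming from $\psi$ are absolute.

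The first sub-step is an $L^\infty$ approximation bound: by a standard Taylor expansion argument using $f^\circ \in C^{\tau_f}(\bbR^d)$, one obtains $\norm{h_\delta - f^\circ}_{L^\infty(\bbR^d)} \leq C_\psi \norm{f^\circ}_{C^{\tau_f}(\bbR^d)} \delta^{\tau_f}$. Choosing $\delta = (\varepsilon/(C_\psi \norm{f^\circ}_{C^{\tau_f}(\bbR^d)}))^{1/\tau_f}$ makes this at most $\varepsilon$. The second sub-step bounds $\norm{h_\delta}_{W^{\tau(\theta_n)}_2(\bbR^d)}$ via Fourier analysis: since $\widehat{h_\delta}(\xi) = \hat\psi(\delta\xi)\hat{f^\circ}(\xi)$, splitting $\{\norm\xi \leq 1/\delta\}$ and $\{\norm\xi > 1/\delta\}$ and using the Schwartz decay of $\hat\psi$ yields
\begin{align*}
\norm{h_\delta}_{W^{\tau(\theta_n)}_2(\bbR^d)}^2 \;\leq\; C_\psi' \, \delta^{-2(\tau(\theta_n) - \tau_f)} \, \norm{f^\circ}_{W^{\tau_f}_2(\bbR^d)}^2,
\end{align*}
since $\tau(\theta_n) > \tau_f$. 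Plugging in the choice of $\delta$ converts the $\delta^{-2(\tau(\theta_n)-\tau_f)}$ factor into $\varepsilon^{-2(\tau(\theta_n)-\tau_f)/\tau_f}$ up to a factor of $\norm{f^\circ}_{C^{\tau_f}(\bbR^d)}^{2(\tau(\theta_n)-\tau_f)/\tau_f}$.

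The third sub-step is to translate the Sobolev norm bound on $\bbR^d$ into an RKHS norm bound on $\X$: restricting to $\X$ can only decrease the Sobolev norm, and by \eqref{eq:norm_equivalence_constants} we have $\norm{h_\delta}_{\calH_{k(\theta_n)}(\X)} \leq C_l(\theta_n)^{-1} \norm{h_\delta}_{W^{\tau(\theta_n)}_2(\X)}$. This introduces a $C_l(\theta_n)^{-2}$ factor. Finally, uniformize over $n \geq N$: by Assumption \ref{assumption:smoothness} the values $\{\tau(\theta_n)\}_{n\geq N}$ form a finite set, so the mollifier-dependent constants $C_\psi, C_\psi'$ take finitely many values and may be replaced by a single finite supremum; by Assumption \ref{assumption:parameters} (via $\Theta_N^*$), $C_l(\theta_n)^{-1}$ is uniformly bounded. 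The Sobolev extension theorem (Theorem \ref{thm:Sobolev_Extension}) lets us bound $\norm{f^\circ}_{W^{\tau_f}_2(\bbR^d)}$ by a constant times $\norm{f}_{W^{\tau_f}_2(\X)}$, and we absorb $\norm{f^\circ}_{C^{\tau_f}(\bbR^d)}$ into the $f$-dependent part of the constant (which is allowed by the statement). This gives the first inequality. For the second inequality, since $\tau(\theta_n) \leq \tau_k^+$ and $\varepsilon < 1$, the map $x \mapsto \varepsilon^{-2(x-\tau_f)/\tau_f}$ is increasing on $[\tau_f, \infty)$, so $\varepsilon^{-2(\tau(\theta_n)-\tau_f)/\tau_f} \leq \varepsilon^{-2(\tau_k^+-\tau_f)/\tau_f}$.

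The main obstacle is book-keeping: making sure that every constant in the van der Vaart--van Zanten convolution argument either depends on objects fixed independently of $\theta_n$ (the mollifier, the dimension), or on kernel data that Assumptions \ref{assumption:parameters}--\ref{assumption:smoothness} force to lie in a compact range, or on $f$-data that we allow in the final constant. The reduction to a mollifier-based construction (rather than a construction that uses $k(\theta_n)$ itself, such as convolution with the kernel) is what makes this uniformization clean, because it isolates the $\theta_n$-dependence into a single norm-equivalence step.
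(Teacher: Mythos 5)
Your proposal is correct and follows essentially the same route as the paper, which handles this lemma by deferring to the kernel-convolution approximation argument of \citet[Lemma 4]{VanderVaart2011} and tracking how the norm-equivalence and smoothness-dependent constants vary with $\theta_n$ so that Assumptions \ref{assumption:parameters}--\ref{assumption:smoothness} yield uniform bounds; your mollifier construction, the choice $\delta \asymp \varepsilon^{1/\tau_f}$, and the final uniformization are precisely that argument written out explicitly. The residual dependence of your constant on $\norm{f^{\circ}}_{C^{\tau_f}(\bbR^{d})}$ is also present in the paper's cited source and is simply suppressed in the lemma's stated constant, so it is not a discrepancy you have introduced.
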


If $\tau_{f}\geq\tau(\theta_{n})$ then $f\in\calH_{k(\theta_{n})}(\X)$ therefore we could take $h=f$ and the bound would be $\frac{1}{2}\norm{f}_{W^{\tau(\theta_{n})}_{2}(\X)}$ which has no dependence on $\varepsilon$ so in this case the growth of the concentration function is dictated entirely by the small ball probability term. 

The small ball probability bound requires the result by \citet[Theorem 1.2]{Linde1999} which relates small ball probabilities to the metric entropy of the unit ball of the RKHS corresponding to the kernel. Metric entropy is a method of measuring the size of a given function space denoted $H(M,\varepsilon)$ and is defined as the logarithm of the $\varepsilon$-covering number of $M$, for more discussion see e.g.\ \citep[Chapter 2.3]{Gine2016}. A bound on metric entropy is given by \citet[Theorem 4.3.36]{Gine2016} which illuminates the way the hyperparameters effect the bound. The constants in the proof can easily be bounded by replacing $\tau(\theta_{n})$ by $\tau^{+}_{k}$ and $\tau^{-}_{k}$ where appropriate and doing so yields the next lemma.

\begin{lemma}\label{lem:metric_entropy}
	Fix $N\in\bbN$ and suppose Assumptions 1-\ref{ass:target_extension} hold. Let $\calH_{k(\theta_{n})}^{(1)}(\overline{\X})$ denote the unit ball of the RKHS of $k(\theta_{n})$ over the closure of $\X$. Then $\exists C_{met} = C_{met}\left(\Theta_{N}^{*}\right)$ such that $\forall n\geq N$ and $\forall \varepsilon < 1$ 
	\begin{align*}
	H\left(\calH_{k(\theta_{n})}^{(1)}\left(\overline{\X}\right),\varepsilon\right) \leq C_{met}\;\varepsilon^{-\frac{d}{\tau(\theta_{n})}}.
\end{align*}
\end{lemma}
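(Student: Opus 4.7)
The plan is to reduce the claim to the known Sobolev metric entropy estimate and then transfer it to the RKHS unit ball via norm equivalence, finally making all constants uniform in $n$ by appealing to Assumptions \ref{assumption:parameters} and \ref{assumption:smoothness}.

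First, I would invoke \citet[Theorem 4.3.36]{Gine2016} to obtain the metric entropy bound for a Sobolev unit ball. Under Assumption \ref{assumption:domain}, $\X$ is in particular a bounded Lipschitz domain, and Assumption \ref{assumption:parameters} gives $\tau(\theta_n) > d/2$, so applying the theorem for each fixed $n\geq N$ yields
\begin{equation*}
    H\!\left(W_2^{\tau(\theta_n),(1)}(\overline{\X}),\varepsilon\right) \;\leq\; c\!\left(d,\X,\tau(\theta_n)\right)\,\varepsilon^{-d/\tau(\theta_n)},\qquad \forall\,\varepsilon<1,
\end{equation*}
where the superscript $(1)$ denotes the unit ball.

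Next, I would use the norm equivalence \eqref{eq:norm_equivalence_constants}, which gives $\norm{g}_{W_2^{\tau(\theta_n)}(\X)}\leq C_u(\theta_n)\norm{g}_{\calH_{k(\theta_n)}(\X)}$, so that $\calH_{k(\theta_n)}^{(1)}(\overline{\X})\subseteq C_u(\theta_n)\,W_2^{\tau(\theta_n),(1)}(\overline{\X})$. Any $\varepsilon$-cover of the larger set restricts to an $\varepsilon$-cover of the smaller, and a cover of a scaled set is obtained by scaling the radius, so
\begin{equation*}
    H\!\left(\calH_{k(\theta_n)}^{(1)}(\overline{\X}),\varepsilon\right) \;\leq\; H\!\left(W_2^{\tau(\theta_n),(1)}(\overline{\X}),\varepsilon/C_u(\theta_n)\right) \;\leq\; c\!\left(d,\X,\tau(\theta_n)\right)\,C_u(\theta_n)^{d/\tau(\theta_n)}\,\varepsilon^{-d/\tau(\theta_n)}.
\end{equation*}

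Finally, I would make the prefactor uniform in $n\geq N$. Assumption \ref{assumption:smoothness} guarantees $\{\tau(\theta_n)\}_{n\geq N}$ takes only finitely many values in $[\tau_k^-,\tau_k^+]$, so $c(d,\X,\tau(\theta_n))$ attains only finitely many values and has a finite maximum controlled by $d,\X,\tau_k^-,\tau_k^+$. The term $C_u(\theta_n)^{d/\tau(\theta_n)}$ is likewise controlled by the elements of $\Theta_N^*$: the finiteness of $C_N=\sup_{n\geq N}C_u(\theta_n)C_l(\theta_n)^{-1}$ together with the finiteness of $\tau_k^{\pm}$ and the exponent range $d/\tau(\theta_n)\in[d/\tau_k^+,d/\tau_k^-]$ allow one to absorb it into a constant $C_{met}(\Theta_N^*)$. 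Taking the supremum over $n\geq N$ yields the stated bound. The principal subtlety is the third step: ensuring that the individual norm equivalence constant $C_u(\theta_n)$ (not just the ratio appearing in $C_N$) can be controlled by $\Theta_N^*$ alone, which relies on the scaling conventions adopted for the RKHS and, when amplitude hyperparameters are present, on the non-degeneracy already imposed by Assumption \ref{ass:small_ball}.
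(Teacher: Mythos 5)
Your proposal matches the paper's own (very terse) argument: the paper likewise derives the bound from \citet[Theorem 4.3.36]{Gine2016} together with the Sobolev--RKHS norm equivalence, and simply asserts that ``the constants in the proof can easily be bounded by replacing $\tau(\theta_{n})$ by $\tau^{+}_{k}$ and $\tau^{-}_{k}$ where appropriate''; your version fills in the covering-number rescaling step explicitly and is otherwise the same route. The subtlety you flag at the end is real --- the prefactor picks up $C_{u}(\theta_{n})^{d/\tau(\theta_{n})}$, whereas $\Theta_{N}^{*}$ only records the ratio $C_{N}=\sup_{n\geq N}C_{u}(\theta_{n})C_{l}(\theta_{n})^{-1}$ --- but this imprecision is inherited from the paper's own statement and one-line justification rather than introduced by your argument.
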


The proof by \citet[Theorem 5]{VanderVaart2011} is now followed to link metric entropy to small ball probability. This will involve going through auxiliary results by \citet{Linde1999} to make sure the possible altering hyperparameters result in constants that are controlled, this is a tedious process and the referenced paper should be consulted for greater context. Before this is started note that the two auxiliary results by \citet[Lemma 2.1, Lemma 2.2]{Linde1999}  used to link entropy numbers to GPs do not depend on the hyperparameter choices since they hold with constants not depending on the smoothness of the RKHS, see e.g.\ \citep[Theorem 9.1]{Pisier1989} and \citep[Page 1315]{Artstein2004}. The first step is to use \citep[Proposition 2.4]{Linde1999} in combination with the bound we have derived for metric entropy to get that $\forall \gamma > 2d/(2\tau(\theta_{n})-d)$, $ \exists\: C(\theta_{n},\gamma) > 0$ such that
\begin{align}
	\phi_{\theta_{n}}(\varepsilon) \coloneqq -\log\Pi_{\theta_{n}}\left(\norm{f}_{\infty} \leq \varepsilon\right) \leq C(\theta_{n},\gamma)\varepsilon^{-\gamma}.\label{eq:metric_bound}
\end{align}
Next we explain the result by \citet[Proposition 3.1]{Linde1999}.  First, using the result by \citet[Equation 3.4]{Linde1999} and Lemma \ref{lem:metric_entropy}
\begin{align}
	\phi_{\theta_{n}}(\varepsilon) & \leq \log 2+ H\left(\calH_{k(\theta_{n})}^{(1)},\varepsilon (8\phi_{\theta_{n}}(\varepsilon/2))^{-\frac{1}{2}}\right) \nonumber\\
	& \leq \log 2 + C_{met}\varepsilon^{-\frac{d}{\tau(\theta_{n})}}8^{\frac{d}{2\tau(\theta)}}\phi_{\theta_{n}}\left(\varepsilon/2\right)^{\frac{d}{2\tau(\theta_{n})}},\label{eq:small_ball_bound}
\end{align}
then once $\varepsilon$ gets smaller than some $\varepsilon^{*}$ the second term on the right hand side of \eqref{eq:small_ball_bound} becomes greater than $L\log 2$ for some constant $L > 0$ \citep[Equation 3.4]{Linde1999}. This argument does not consider changing hyperparameters. Indeed, for a fixed constant $L > 0$ for different hyperparameters $\theta_{n}$ we might need different $\varepsilon^{*}_{n}$ to conclude that the second term is greater than $L\log 2$. Assumption 5 introduces the required uniformity by allowing us to say that once $\varepsilon$ is small enough \eqref{eq:small_ball_bound} can be bounded by a constant times the second term in \eqref{eq:small_ball_bound} for all hyperparameter choices. Specifically, by Assumption \ref{ass:small_ball} we know that if $\varepsilon < c$ then $\phi_{\theta_{n}}(\varepsilon/2)\geq\alpha_{N}$, therefore if we set
\begin{align}
	\varepsilon^{*} \coloneqq \min\left(c,\left(\alpha_{N}^{\frac{d}{2\tau_{k}^{+}}}(\log 2)^{-1}\right)^{\frac{\tau_{k}^{-}}{d}}\right),\label{eq:ep_star}
\end{align}
then for $\varepsilon < \varepsilon^{*}$, we have 
\begin{align*}
	\phi_{\theta_{n}}(\varepsilon) 
	& \leq \log 2 + C_{met}\; \varepsilon^{-\frac{d}{\tau(\theta_{n})}}8^{\frac{d}{2\tau(\theta_{n})}}\phi_{\theta_{n}}\left(\varepsilon/2\right)^{\frac{d}{2\tau(\theta_{n})}} \\
	& \leq (C_{met} + 1) \; \varepsilon^{-\frac{d}{\tau(\theta_{n})}}8^{\frac{d}{2\tau(\theta_{n})}}\phi_{\theta_{n}}\left(\varepsilon/2\right)^{\frac{d}{2\tau(\theta_{n})}}. 
\end{align*}

Now take logarithms and employ the iterative procedure from the proof by \citet[Proposition 3.1]{Linde1999}. Taking logarithms gives
\begin{align*}
	\log\phi_{\theta_{n}}(\varepsilon) \leq \frac{d}{2\tau(\theta_{n})}\log\phi_{\theta}\left(\varepsilon/2\right) + \log\chi_{n}(\varepsilon),
\end{align*}
where $\chi_{n}(\varepsilon) = (C_{met}+1)\varepsilon^{-\frac{d}{\tau(\theta_{n})}}8^{\frac{d}{2\tau(\theta_{n})}}$. Now iterate this inequality so that for any $m\in\bbN$
\begin{align}
	\log\phi_{\theta_{n}}(\varepsilon)\leq \left(\frac{d}{2\tau(\theta_{n})}\right)^{m}\log\phi_{\theta_{n}}\left(2^{-m}\varepsilon \right) + \sum_{j=0}^{m-1}\left(\frac{d}{2\tau(\theta_{n})}\right)^{j}\log\chi\left(  2^{-j} \varepsilon \right),\label{eq:iterated_inequality}
\end{align}
and note that the left hand side does not depend on $m$ and substituting the bound in \eqref{eq:metric_bound} reveals the first term on the right hand side of \eqref{eq:iterated_inequality} converges to zero as $m\rightarrow\infty$
\begin{align*}
	\left(\frac{d}{2\tau(\theta_{n})}\right)^{m}\log\phi_{\theta_{n}}\left( 2^{-m}\varepsilon\right) 
	& \leq \left(\frac{d}{2\tau(\theta_{n})}\right)^{m}\log\left(C(\theta_{n},\gamma)2^{m\gamma} \varepsilon^{-\gamma}\right) \\
	& = \left(\frac{d}{2\tau(\theta_{n})}\right)^{m}\left(m\gamma\log 2 + \log(C(\theta_{n},\gamma)\varepsilon^{-\gamma})\right)  \xrightarrow{m\rightarrow\infty}0.
\end{align*}

So taking the limit of $m$ in \eqref{eq:iterated_inequality} gives
\begin{align*}
\log\phi_{\theta_{n}}(\varepsilon) & \leq \sum_{j=0}^{\infty}\left(\frac{d}{2\tau(\theta_{n})}\right)^{j}\log\chi_{n}\left( 2^{-j}\varepsilon\right) \\
& = \frac{2\tau(\theta_{n})}{(2\tau(\theta_{n})-d)}\log\chi_{n}(\varepsilon) + \sum_{j=0}^{\infty}\left(\frac{d}{2\tau(\theta_{n})}\right)^{j}\log\left(\frac{\chi_{n}(2^{-j}\varepsilon)}{\chi_{n}(\varepsilon)}\right) \\
& \leq \frac{2\tau(\theta_{n})}{(2\tau(\theta_{n})-d)}\log\chi_{n}(\varepsilon) + \log(2)\frac{d}{\tau_{k}^{-}}\sum_{j=0}^{\infty}\left(\frac{d}{2\tau_{k}^{-}}\right)^{j}j,
\end{align*}
and sum has a closed form which we can upper bound
\begin{align*}
	\log\phi_{\theta_{n}}(\varepsilon) \leq \frac{2\tau(\theta_{n})}{(2\tau(\theta_{n})-d)} \log\chi_{n}(\varepsilon) + \log(2)\left(\frac{d}{\tau_{k}^{-}}\right)\left(\frac{d}{2\tau_{k}^{-}}\right)\left(\frac{d}{2\tau_{k}^{-}}-1\right)^{-2}.
\end{align*}

Finally, exponentiating tells us that $\forall\varepsilon < \varepsilon^{*}$:
\begin{align*}
	\phi_{\theta_{n}}(\varepsilon) \leq C^{*}\varepsilon^{-2d/(2\tau(\theta_{n})-d)} \leq C^{*}\varepsilon^{-2d/(2\tau_{k}^{-}-d)},
\end{align*}
where we have collected the dependencies on $\Theta_{N}^{*}$ into $C^{*}$. In summary the following lemma which is analogous to the result by \citet[Lemma 3]{VanderVaart2011}, but with possibly changing hyperparameters, has been proved.
\begin{lemma}\label{lem:small_ball}
	Fix $N\in\bbN$ and suppose Assumptions 1-\ref{ass:target_extension} hold. Then, for $\varepsilon < \varepsilon^{*}$, where $\varepsilon^{*}$ is from \eqref{eq:ep_star}, and $n\geq N$: $-\log\Pi_{\theta_{n}}\left(\norm{f}_{L^{\infty}\left(\overline{\X}\right)} \leq \varepsilon \right) \leq C^{*}\varepsilon^{-2d/ (2\tau_{k}^{-}-d)}$.
\end{lemma}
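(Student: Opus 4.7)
The plan is to reduce the small-ball probability to a metric entropy bound, then close up a self-referential inequality by iteration, carefully tracking the dependence on $\theta_n$ so that everything can be controlled uniformly for $n\geq N$.

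First, I would invoke the general machinery of Kuelbs--Li--Linde that links small-ball probabilities of centered Gaussian measures to the metric entropy of the unit ball of the corresponding RKHS. Using the metric entropy estimate in Lemma \ref{lem:metric_entropy}, which gives $H(\calH_{k(\theta_n)}^{(1)}(\overline{\X}),\varepsilon)\leq C_{met}\varepsilon^{-d/\tau(\theta_n)}$ with $C_{met}$ depending only on $\Theta_N^*$, together with \citep[Proposition 2.4]{Linde1999}, I obtain a preliminary polynomial bound $\phi_{\theta_n}(\varepsilon)\leq C(\theta_n,\gamma)\varepsilon^{-\gamma}$ for any $\gamma > 2d/(2\tau(\theta_n)-d)$. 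The constant $C(\theta_n,\gamma)$ depends on $\theta_n$, so at this stage I do not yet have a uniform estimate; it will serve only as a rough bound that gets killed in the iteration limit.

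Next, the key step: apply the self-referential bound \citep[Equation 3.4]{Linde1999}, which combined with Lemma \ref{lem:metric_entropy} gives
\begin{align*}
\phi_{\theta_n}(\varepsilon)\leq \log 2+C_{met}\,8^{d/(2\tau(\theta_n))}\,\varepsilon^{-d/\tau(\theta_n)}\,\phi_{\theta_n}(\varepsilon/2)^{d/(2\tau(\theta_n))}.
\end{align*}
The cosmetic "$\log 2$" needs to be absorbed into the second term; this is where I cannot just pick a single threshold as in \citep{Linde1999}, because the threshold would depend on $\theta_n$. Here Assumption \ref{ass:small_ball} enters decisively: it guarantees $\phi_{\theta_n}(\varepsilon/2)\geq\alpha_N$ uniformly for $n\geq N$ whenever $\varepsilon<c$, so I can define $\varepsilon^*$ as in \eqref{eq:ep_star} and absorb $\log 2$ into the second term uniformly, yielding
\begin{align*}
\phi_{\theta_n}(\varepsilon)\leq (C_{met}+1)\,\varepsilon^{-d/\tau(\theta_n)}\,8^{d/(2\tau(\theta_n))}\,\phi_{\theta_n}(\varepsilon/2)^{d/(2\tau(\theta_n))},\qquad \varepsilon<\varepsilon^*.
\end{align*}

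Finally, I take logarithms and iterate. Writing $\chi_n(\varepsilon)=(C_{met}+1)\,8^{d/(2\tau(\theta_n))}\,\varepsilon^{-d/\tau(\theta_n)}$ and unrolling the recursion $m$ times gives
\begin{align*}
\log\phi_{\theta_n}(\varepsilon)\leq\Bigl(\tfrac{d}{2\tau(\theta_n)}\Bigr)^m\log\phi_{\theta_n}(2^{-m}\varepsilon)+\sum_{j=0}^{m-1}\Bigl(\tfrac{d}{2\tau(\theta_n)}\Bigr)^j\log\chi_n(2^{-j}\varepsilon).
\end{align*}
The first term vanishes as $m\to\infty$ thanks to the rough polynomial bound from the first step, and the geometric series converges because $d/(2\tau(\theta_n))\leq d/(2\tau_k^-)<1$ by Assumption \ref{assumption:parameters}. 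Bounding every $\tau(\theta_n)$ in the denominators below by $\tau_k^-$ and every one in the numerators above by $\tau_k^+$ absorbs all $\theta_n$-dependence into a single constant $C^*=C^*(\Theta_N^*)$, and exponentiating gives the claimed bound $\phi_{\theta_n}(\varepsilon)\leq C^*\varepsilon^{-2d/(2\tau_k^--d)}$. The main obstacle throughout is simply this uniform control of constants; Assumption \ref{ass:small_ball} is what rescues the iteration threshold from collapsing, and the iteration itself is purely arithmetic once that uniformity is in place.
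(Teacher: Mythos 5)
Your proposal is correct and follows essentially the same route as the paper's own proof: the preliminary polynomial bound via \citet[Proposition 2.4]{Linde1999} and Lemma \ref{lem:metric_entropy}, the self-referential inequality from \citet[Equation 3.4]{Linde1999}, the use of Assumption \ref{ass:small_ball} to fix a uniform threshold $\varepsilon^{*}$ so the $\log 2$ term can be absorbed, and the logarithmic iteration whose remainder is killed by the rough bound. The only difference is presentational; the paper additionally writes out the closed form of the geometric-type series, but your argument is the same.
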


\begin{corollary}\label{cor:gen_inv_bound}
	Fix $N\in\bbN$ and suppose Assumptions 1-\ref{ass:target_extension} hold. \sloppy Then, $\exists C = C\left(\norm{f}_{W^{\tau_{f}}_{2}(\X)},\Theta_{N}^{*}\right)$ such that for $n\geq N$, $\psi_{\theta_{n},f}^{-1}(n)\leq C\max\left(n^{-\tau_{f}/2\tau_{k}^{+}}, n^{d/4\tau_{k}^{-}-1/2}\right)$.
\end{corollary}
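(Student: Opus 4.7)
The plan is to bound $\phi_{\theta_n,f}(\varepsilon)$ by summing the decentering estimate from Lemma \ref{lem:decentering} with the small-ball bound from Lemma \ref{lem:small_ball}, then convert this into a bound on $\psi_{\theta_n,f}(\varepsilon) = \phi_{\theta_n,f}(\varepsilon)/\varepsilon^{2}$, and finally read off an upper bound on the generalised inverse $\psi_{\theta_n,f}^{-1}(n)$ by solving for $\varepsilon$ in the two summands separately.

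First I would write, for $\varepsilon < \varepsilon^{*}$ (the threshold from Lemma \ref{lem:small_ball}),
\begin{align*}
\phi_{\theta_n,f}(\varepsilon) \;\leq\; C_{1}\,\varepsilon^{-2(\tau_{k}^{+}-\tau_{f})/\tau_{f}} \;+\; C^{*}\,\varepsilon^{-2d/(2\tau_{k}^{-}-d)},
\end{align*}
where $C_{1}$ absorbs the dependence on $\norm{f}_{W^{\tau_{f}}_{2}(\X)}$ and $\Theta_{N}^{*}$ and $C^{*}$ depends only on $\Theta_{N}^{*}$. In the case $\tau_{f} \geq \tau(\theta_{n})$ the decentering term is replaced by the constant $\tfrac{1}{2}\norm{f}^{2}_{\calH_{k(\theta_n)}(\X)}$, which is dominated by the same expression up to constants (since it corresponds to the trivial exponent $0$). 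Dividing by $\varepsilon^{2}$ turns the two exponents into $-2\tau_{k}^{+}/\tau_{f}$ and $-4\tau_{k}^{-}/(2\tau_{k}^{-}-d)$, using the identities $2+2(\tau_{k}^{+}-\tau_{f})/\tau_{f} = 2\tau_{k}^{+}/\tau_{f}$ and $2+2d/(2\tau_{k}^{-}-d) = 4\tau_{k}^{-}/(2\tau_{k}^{-}-d)$.

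To invert, I would choose $\varepsilon_{n}$ so that each of the two summands in the resulting upper bound on $\psi_{\theta_n,f}(\varepsilon_{n})$ is at most $n/2$. Solving each inequality individually yields the two candidate values $\varepsilon_{n} = C'\,n^{-\tau_{f}/(2\tau_{k}^{+})}$ from the decentering piece and $\varepsilon_{n} = C''\,n^{-(2\tau_{k}^{-}-d)/(4\tau_{k}^{-})} = C''\,n^{d/(4\tau_{k}^{-})-1/2}$ from the small-ball piece. Taking $\varepsilon_{n}$ to be the maximum of these two choices guarantees $\psi_{\theta_n,f}(\varepsilon_{n}) \leq n$, and since the generalised inverse of the decreasing function $\psi_{\theta_n,f}$ satisfies $\psi_{\theta_n,f}^{-1}(n) \leq \varepsilon_{n}$ as soon as $\psi_{\theta_n,f}(\varepsilon_{n}) \leq n$, this directly gives the claimed bound.

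The main obstacle is not any single computation but the bookkeeping: tracking which constants depend on which of $\tau_{f}$, $\tau_{k}^{\pm}$, $\norm{f}_{W^{\tau_{f}}_{2}(\X)}$, and $\Theta_{N}^{*}$, and checking that the small-$\varepsilon$ restriction $\varepsilon_{n} < \varepsilon^{*}$ from Lemma \ref{lem:small_ball} is compatible with the range of $n$ under consideration. The latter is handled by noting that the required $\varepsilon_{n}$ tends to zero as $n \to \infty$, so $\varepsilon_{n} < \varepsilon^{*}$ holds for all $n$ past some threshold depending only on $\Theta_{N}^{*}$ and $\norm{f}_{W^{\tau_{f}}_{2}(\X)}$; the finitely many remaining values of $n$ are absorbed by enlarging the final constant $C$.
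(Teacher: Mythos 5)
Your proposal matches the paper's proof: both combine Lemma \ref{lem:decentering} and Lemma \ref{lem:small_ball} to bound $\psi_{\theta_{n},f}(\varepsilon)=\phi_{\theta_{n},f}(\varepsilon)\varepsilon^{-2}$ by a sum of two powers of $\varepsilon$ and then invert by taking $\varepsilon_{n}$ to be the appropriate power of $n$, with the restriction $\varepsilon_{n}<\varepsilon^{*}$ handled by taking $n$ large and enlarging the constant. The one loose point is your parenthetical for the case $\tau_{f}\geq\tau(\theta_{n})$: when $\tau_{k}^{+}\leq\tau_{f}$ the exponent $-2(\tau_{k}^{+}-\tau_{f})/\tau_{f}$ is nonnegative, so the constant decentering term is not dominated by $\varepsilon^{-2(\tau_{k}^{+}-\tau_{f})/\tau_{f}}$; however, after dividing by $\varepsilon^{2}$ it contributes only $\varepsilon^{-2}$, which is absorbed by the small-ball exponent $-4\tau_{k}^{-}/(2\tau_{k}^{-}-d)<-2$, so the stated bound is unaffected.
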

\begin{proof}
	By Lemma \ref{lem:decentering} and Lemma \ref{lem:small_ball}, using the restriction of $f^{\circ}$ to the closure of $\X$, $\exists C_{1} > 0$ such that $\forall\: n\geq N$ and $\varepsilon < \varepsilon^{*}$, where $\varepsilon^{*}$ is from \eqref{eq:ep_star}
	\begin{align*}
    	\phi_{\theta_{n},f}(\varepsilon)\varepsilon^{-2} 
    	& \leq C_{1}\left(\varepsilon^{-(2d/(2\tau(\theta_{n}) - d)) -2} + \varepsilon^{-2((\tau(\theta_{n}) - \tau)/\tau) -2}\right) \\
    	& \leq C_{1} \varepsilon^{-\frac{2\tau(\theta_{n})}{\min{(\tau,\tau(\theta_{n}) - (d/2))}}} 
    	 \leq C_{1}\left(\varepsilon^{-2\tau_{N}^{+}/\tau}\vee\varepsilon^{\left(d/(4\tau_{N}^{-})-1/2\right)^{-1}}\right).
	\end{align*}
	Set $\varepsilon_{n} = n^{-\left(\tau/2\tau_{k}^{+}\right)\wedge(d/4\tau_{k}^{-}-1/2)}$ then we know once $n$ is large enough that we have $\varepsilon_{n} < \varepsilon^{*}$ therefore $\exists \:C_{2}$ such that $\forall n\geq N$ $\phi_{\theta_{n},f}(\varepsilon_{n})\varepsilon_{n}^{-2}\leq C_{2}n$. Multiplying $\varepsilon_{n}$ by a constant to remove the factor of $C_{2}$ in the previous expression completes the proof.
\end{proof}

\section{Proof of Theorem \ref{thm:BO}}
The proof follows the proof by \citep[Theorem 1]{Bull2011}. The point $x_{n}$ satisfies
\begin{align*}
	f(x^{*})-f(x_{n})\leq f(x^{*})-R_{f}(x^{*}) - f(x_{n}) + R_{f}(x_{n})\leq 2\bignorm{f-R_{f}}_{L^{\infty}(\X)},
\end{align*} 
since $R_{f}(x_{n}) \geq R_{f}(x^{*})$ since $x_{n}$ was chosen as the maximizer of $R_{f}$. The points picked by the $(\gamma,F,n)$ strategy are quasi-uniform by \citet[Theorem 14, Theorem 18]{Wenzel2019} therefore taking $n_{0}$ to be large enough to ensure that the fill distance obtained from the strategy is small enough to employ Theorem \ref{thm:Misspecified_Sobolev_Interpolation} completes the proof.

\vskip 0.2in

\bibliography{gp_rates.bib}

\end{document}